\journal{}
\newcommand{\bc}{\begin{center}}
\newcommand{\ec}{\end{center}}
\newcommand{\1}{\mathbf{1}}
\newcommand{\E}{\mathbb{E}}
\renewcommand{\P}{\mathbb{P}}
\newcommand{\R}{\mathbb{R}}
\newcommand{\N}{\mathbb{N}}
\newcommand{\cF}{\mathcal{F}}
\newcommand{\cL}{\mathcal{L}}
\newcommand{\cP}{\mathcal{P}}
\newcommand{\cU}{\mathcal{U}}
\newcommand{\cS}{\mathcal{S}}
\newcommand{\cH}{\mathcal{H}}
\newcommand{\cV}{\mathcal{V}}
\newcommand{\cR}{\mathcal{R}}
\newcommand{\pare}[1]{\left ( #1 \right )}
\newcommand{\sgn}{{\rm sgn}}
\newcommand{\cA}{\mathcal{A}}
\newcommand{\ds}{\displaystyle}
\newtheorem{theorem}{Theorem}[section]
\newtheorem{lemma}[theorem]{Lemma}
\newtheorem{proposition}[theorem]{Proposition}
\newtheorem{corollary}[theorem]{Corollary}
\newtheorem{definition}[theorem]{Definition}
\newtheorem{remark}[theorem]{Remark}
\begin{document}

\begin{frontmatter}
\title{Time inhomogeneous Stochastic Differential Equations involving the local time of the unknown process, and associated parabolic operators}

\author[mymainaddress]{Pierre \'Etoré\corref{mycorrespondingauthor}}
\cortext[mycorrespondingauthor]{Corresponding author}
\ead{pierre.etore@univ-grenoble-alpes.fr}

\author[Migueladdress]{Miguel Martinez}
\ead{miguel.martinez@u-pem.fr}

\address[mymainaddress]{Université Grenoble Alpes - LJK - Bâtiment IMAG, 
700 avenue centrale,
38041 St Martin-d'Hères, France.}
\address[Migueladdress]{Universit\'e Paris-Est Marne-la-Vall\'ee - LAMA - Labex B\'ezout,
 5  Bld Descartes, Champs-sur-marne, \\
 77454 Marne-la-Vall\'ee Cedex 2, France. }

\begin{abstract}
In this paper we study time inhomogeneous versions of one-dimensional Stochastic Differential Equations (SDE) involving the Local Time of the unknown process on curves. After proving existence and uniqueness for these SDEs under mild assumptions, we explore their link with Parabolic Differential Equations (PDE) with transmission conditions. We study the regularity of solutions of such PDEs and ensure the validity of a Feynman-Kac representation formula.
These results are then used to characterize the solutions of these SDEs as time inhomogeneous Markov Feller processes.  
\end{abstract}

\begin{keyword}
Stochastic Differential Equations with Local Time; time inhomogeneous Skew Brownian Motion; Divergence Form
Operators ; Feynman-Kac representation formula ; time inhomogeneous Markov processes 

\end{keyword}

\end{frontmatter}

\section{Introduction}

\subsection{Presentation}
In a seminal paper on the subject \cite{legall}, J.-F. Le Gall gives necessary and sufficient conditions for pathwise uniqueness property of
time homogeneous
 one-dimensional Stochastic Differential Equations involving the Local Time (SDELT) of the unknown process, namely 
\begin{equation}
\label{eq:X-legall}
dX_t=\sigma(X_t)dW_t+\int_{\mathbb R} L^{x}_t(X)\nu(dx),\quad t\in [0,T],\quad X_0=x_0.
\end{equation}
Here $T>0$ denotes the time horizon, $x_0\in \R$ is the starting point, $\sigma~:~{\mathbb R}\rightarrow {\mathbb R}_+^{\ast}$ is a given bounded measurable function, $\nu(dx)$ is a given bounded measure on ${\mathbb R}$, and $(L_t^x(X))_{t\in [0,T]}$ stands for the symmetric local time of the unknown process $(X_t)_{t\in [0,T]}$ at point $x$. Together  with results on the existence of a weak solution for \eqref{eq:X-legall}, these results on pathwise uniqueness allow to assert that \eqref{eq:X-legall} possesses a unique strong  solution.

Note that when the measure $\nu(dx)$ is sufficiently regular and can be decomposed into $\nu(dx) = \frac{b(x)}{\sigma^2(x)}dx + \sum_{i=1}^I\beta_i \delta_{x_i}(dx)$ (for some integer $I$ and coefficients $\beta_i\in {\mathbb R}$ and a bounded measurable function $b~:~{\mathbb R}\rightarrow {\mathbb R}$), the stochastic differential equation \eqref{eq:X-legall} simplifies to 
\begin{equation}
\label{eq:X-legall-2}
dX_t=\sigma(X_t)dW_t+b(X_t)dt+\sum_{i=1}^I\beta_i dL^{x_i}_t(X),\quad t\in [0,T],\quad X_0=x_0
\end{equation}  
thanks to the occupation time formula.
In the case where $\sigma \equiv 1$, $b\equiv 0$, $I=1$, $x_1=0$, and $\beta_1\in(-1,1)$, we recover the celebrated Skew Brownian motion, which has been an endless subject of study on its own right over the recent past years (see the survey \cite{lejay-2006}; see \cite{zhang} for an example of application).

Solutions of one-dimensional SDELTs such as \eqref{eq:X-legall-2} are known to be related to operators of the form
\begin{equation}
\label{eq:op-div}
\frac{\rho}{2}\nabla\cdot(a\nabla)+b\nabla
\end{equation}
where $\rho a=\sigma^2$ and the jumps $a(x_i+)-a(x_i-)$ are proportional to $\beta_i$, $1\leq i\leq I$, in the sense of \eqref{eq:betai-poura} (see the forthcoming Subsection \ref{ssec:eq-form} for details). Note that in \eqref{eq:op-div} the $\nabla$-sign can stand either for the weak derivative, for example when one studies the problem in an $L^2$-context with the help of Dirichlet forms (see for instance \cite{ouknine-russo-trutnau}), or for the classical derivative,  when one works with Feller semigroups. Note that both approaches require to carefully specify the domain of the operator, guaranteeing that for any function~$\varphi$ in this domain, the weak derivative of $a\nabla\varphi$ exists.

Further, assuming the coefficients $\sigma$ and $b$ are smooth outside the points of singularity $x_i$, $1\leq i\leq I$, one can establish,  via a Feynman-Kac formula, the link
between the process $X$ and the classical solution $u(t,x)$ of some parabolic Partial Differential Equation (PDE) with transmission conditions (the so-called {\it
Diffraction} or {\it transmission parabolic problem}): the PDE satisfied by $u(t,x)$ involves the operator \eqref{eq:op-div}, and~$u(t,x)$ has to satisfy at any time $t$ the transmission condition
$$
a(x_i+)u'_x(t,x_i+)=a(x_i-)u'_x(t,x_i-)$$
for any $1\leq i\leq I$.
 In particular, this link opens an extended broadcast of applications such as dispersion across interfaces \cite{waymire}, diffusions in porous media \cite{lejay-geo2}, magneto-electroencephalography \cite{faugeras} (see also \cite{lejay-2006} and the references therein).
 
For proofs stating - in a time homogeneous context - the link between solutions of \eqref{eq:X-legall-2}, operators of the form~\eqref{eq:op-div}, and solutions of PDE involving transmission conditions, one may refer to the seminal papers \cite{portenko}\cite{mastrangello}, the overviews \cite{lejay-2006}, \cite{talay-survey}, and also to the series of works \cite{martinez04a}, \cite{martinez06a}, \cite{etore05a}, \cite{etore06a}, \cite{martinez12a}, where numerical schemes are presented and studied. Note that this kind of questions still seems to rise a lot of interest (see the recent papers \cite{engelbert}, \cite{mazzonetto1}).

In this paper we aim at generalizing
this family of results in a time inhomogeneous context.
 Our starting point is the study of a time inhomogeneous version of \eqref{eq:X-legall-2}, namely
\begin{equation}
\label{eq:X}
dX_t=\sigma(t,X_t)dW_t+b(t,X_t)dt+\sum_{i=1}^I\beta_i(t)dL^{x_i}_t(X),\quad t\in [0,T],\quad X_0=x_0.
\end{equation}
Here the generalization is three fold : first the coefficients $\sigma$ and $b$ are now allowed to depend on time, second the coefficients $\beta_i$ are no longer constant but are also allowed to depend on time, and third the functions $x_i~:~t\mapsto x_i(t)$ are now time-curves, so that $(L_t^{x_i}(X))_{t\in [0,T]}$ stands for the (symmetric) local time of the unknown process $(X_t)_{t\in [0,T]}$ along the time-curve $x_i$ (see the next subsection for a precise definition).

Particular versions of \eqref{eq:X} have already been examined in the literature. The so-called Inhomogeneous Skew Brownian motion 
(ISBM)
$$
dX_t=dW_t+\beta(t)dL^0_t(X)
$$
 first appears in the seminal paper \cite{weinryb} where a pathwise uniqueness result is proved. On ISBM see also the very recent papers \cite{etoremartinez2},\cite{ouknine-skewinho}. Besides some SDEs involving the local time of the unknown process on a curve appear in \cite{russo-trutnau}, \cite{trutnau1}. For example in \cite{trutnau1} the author investigates the question of the existence of a weak solution to
\begin{equation}
\label{eq:trutnau}
dR_t=\sigma\sqrt{|R_t|}dW_t+\frac{\sigma^2}{4}(\delta-bR_t)dt+(2p-1)dL^\gamma_t(R),\quad t\in[0,\infty),\quad R_0=r,
\end{equation}
where $r,\sigma,\delta>0$, $b\geq 0$, $p\in(0,1)$ and $\gamma:\R_+\to\R_+$ is assumed to be in $H^1_{loc}(\R_+)$. 
The question of pathwise uniqueness for \eqref{eq:trutnau} has been investigated separately by the same author in \cite{trutnau2}.
 Regarding existence, the difficulty relies in the fact that the coefficients in \eqref{eq:trutnau} are irregular and not bounded - even after some transformations applied to this SDE. The author manages to overcome this difficulty with the help of the machinery of generalized Dirichlet forms (see \cite{stannat}; in fact \cite{russo-trutnau} gives a general framework that was applied again in \cite{trutnau1}).   Note also that  the setting of generalized Dirichlet forms allows to study equation~\eqref{eq:trutnau}, with a curve that has only a weak regularity (by contrast in our study the curves $x_i$ will be assumed to be of class~$C^1$). However, this approach has some limitations: for example 
the choice of $\gamma$ is in fact restricted by some monotonicity assumptions, and 
the starting point $r>0$ in \eqref{eq:trutnau} can only be taken outside an exceptional set (a set of capacity zero). Note that assumptions and techniques in \cite{trutnau2} differ from the ones in \cite{trutnau1}. For example in \cite{trutnau2} the curve $\gamma$ is assumed to be continuous and locally of bounded variation and the monotonicity assumptions are dropped.

Here we will work in a more classical setting. Our coefficients $\sigma$ and $b$ will be always bounded, and $\sigma$ will be always uniformly strictly positive (however, we stress that $\sigma$ and $b$ can present discontinuities). When turning to PDE issues we will require smoothness of the coefficients outside the interfaces, in order to deal with classical solutions of PDE (and not only weak ones).  We will not allow the curves to cross, nor to touch. These assumptions will allow to study \eqref{eq:X} in full generality (multiple curves, time-dependent $\beta_i$'s etc...) with the help of classical stochastic analysis. We believe this is the first attempt in this direction.

Note that in the case we examine the use of generalized Dirichlet forms would probably allow to get alternate proofs and relax the assumptions on the coefficients. But
in our opinion this topic surely requires further investigations (see also our comments in
Subsection \ref{ss:sol-faible}).

The content and organization of the paper are the followings.

In  Section \ref{sec:prelim}, we give preliminary material for the study of equation \eqref{eq:X}. First, we recall results on the related martingale problem. Second, we recall  some pathwise uniqueness results to be found in \cite{legall} (available in a time inhomogeneous context).  Then we present the recent It\^o-Peskir formula (see \cite{peskir}). This formula is a kind of generalization of the Itô-Tanaka formula to time-dependent functions. We provide a slight adaptation of the It\^o-Peskir formula (in the case where the curves are $C^1$ functions). Since we aim at studying the generator of the solutions of equation \eqref{eq:X},
we also give introductory material to the semigroups associated to time inhomogeneous Markov processes and Feller evolution systems.

In  Section \ref{sec:SDE} we use the result of Peskir to prove a change of variable formula, that will be of crucial use in the rest of the paper. Then we give conditions for the equation \eqref{eq:X} to admit a weak or strong solution, to enjoy pathwise uniqueness. The method follows closely Le Gall \cite{legall} by the mean of a space transform that eliminates the local times. But as the local times are now taken on curves and the $\beta_i$'s are time-dependent, we have to use the It\^o-Peskir formula, at places where Le Gall uses the classical It\^o-Tanaka formula.

Section \ref{sec:feynman} is devoted to the proof of a Feynman-Kac representation linking the solution of \eqref{eq:X} and the solution of a parabolic partial differential equation with transmission conditions along the curves $x_i$. It is assumed that the solution of the parabolic PDE with transmission conditions is smooth enough in order to apply the change of variable formula of Section \ref{sec:SDE}.

Section \ref{sec:EDP} is devoted to the study of the parabolic PDE with transmission conditions appearing in the previous section. We first study its weak interpretation and manage to show, by adapting the arguments in \cite{lions-magenes}, that a weak solution exists.
As regarding classical solutions, we rely on the main result of the reference article \cite{lady1}, where the coefficient $\rho$ in \eqref{eq:op-div} is constantly equal to one and the sub-domains are cylindrical (non-moving interfaces). For the sake of completeness, we give hints of the main steps of the proof given in \cite{lady1}. Again, using the fact that the space dimension is one, and space transform techniques, we manage to generalize the result to the solution of the parabolic PDE with transmissions conditions, with $\rho\neq 1$ and moving interfaces. Thus, we fully prove that the solution of the parabolic PDE with transmission conditions is smooth enough to assert the validity of the Feynman-Kac representation given in the previous section (see the conclusion at the end of Section \ref{sec:EDP}).

Section \ref{sec:markov} is an attempt to characterize the Markov generator of the solution $X=(X_t)_{t\in [0,T]}$ to \eqref{eq:X}. We first give a set of sufficient conditions for $X$ to be a Feller time inhomogeneous Markov process (see Subsection~\ref{ss:markov-inho} for a definition).  Then, we manage to identify fully the generator of $X$ in the case of non-moving interfaces. The case of moving interfaces seems more difficult to handle since we do no longer have the continuity of the time derivative of the associated parabolic transmission problem.

An Appendix contains detailed material regarding the It\^o-Peskir formula and PDE technical aspects.

\vspace{0.2cm}
Some notations frequently used in the paper are introduced in the next subsection.

\subsection{Notations}

In the following notations an interval $[0,T]\subset\R_+$ is given and kept fixed (with $0<T<\infty$).

For any semi-martingale $X$ the process $L^0_.(X)=(L^0_t(X))_{t\in [0,T]}$ is the symmetric local time at point~$0$ of~$X$.
And for any continuous function of bounded variation $\gamma:[0,T]\to\R$ we denote by
$L^\gamma_.(X)$ the process defined by
$$
L^\gamma_t(X)=L^0_t(X-\gamma),\quad\forall t\in [0,T].$$
So that
$$
\forall t\in [0,T],\quad L^\gamma_t(X)=\P-\lim_{\varepsilon\downarrow 0}\frac{1}{2 \varepsilon }
\int_0^t \1_{|X_s-\gamma(s)|<\varepsilon}\,d\langle X\rangle_s,$$
(see \cite{RY}, Exercise VI-1-25, and \cite{peskir}).

\vspace{0.2cm}
For any topological spaces $U,V$ we denote by $C(U)$ the set of continuous $\R$-valued functions on $U$, and by $C(U,V)$ the set of continuous functions from $U$ to $V$.

$C_b(U)$ denotes the set of continuous bounded functions on $U$.

$C^p(U)$, $p\in\bar{\N}=\N\cup\{\infty\}$, denotes the set of continuous functions on $U$ with continuous derivatives up to order~$p$.

\vspace{0.2cm}

$C_0(\R)$ denotes the set on continuous functions on $\R$ vanishing at infinity.

\vspace{0.2cm}

We denote $E=[0,T]\times\R$ and $E^\circ=[0,T)\times\R$.
\vspace{0.1cm}

\vspace{0.1cm}
Let $F\subset E$ be an open subset of $E$. We denote by $C^{p,q}(F)$ the set of continuous functions on $F$, with continuous derivatives up to order $p$ in the time variable, and up to order $q$ in the space variable (with the convention that for example $q=0$ corresponds to the continuity w.r.t. the space variable).

We denote by $C_0(E)$ the space of $\R$-valued continuous functions of $E$, vanishing at infinity, i.e. when $|x|\to\infty$, $(t,x)\in E$. We will denote this space $C_0$ in short when this causes no ambiguity. The spaces $C_0(\R)$ and $C_0(E)$ are endowed with the corresponding supremum norm, for which we use the common notation $||\cdot||_\infty$ (which norm is meant will be made clear from the context ).

\vspace{0.1cm}
We denote by $C^{\infty,\infty}_c(E)$ the set of $\R$-valued functions of $E$ that are  $C^{\infty,\infty}(E)$, and of compact support with respect to the space variable (i.e. for 
$f\in C^{\infty,\infty}_c(E)$,
 for any $t\in[0,T]$, the function $f(t,\cdot)$ is of compact support). 

We denote  by $C^{\infty,\infty}_{c,c}(E)$ the set of $\R$-valued functions of $E$ that are in $C^{\infty,\infty}(E)$, and of compact support 
$K\subset (0,T)\times \R$.

Note that throughout the whole text, for a space-time function $g\in C(E)$, we will denote  by $g'_x(t,x)$, $g''_{xx}(t,x)$ and $g'_t(t,x)$ its classical partial derivatives at point $(t,x)\in E$, whenever they exist.

\vspace{0.2cm}

For a function in $L^2(\R)$ we denote by $\frac{\mathrm{d}f}{\mathrm{dx}}$ its first derivative in the distribution sense. We denote by $H^1(\R)$ the usual Sobolev space of those functions $f$ in $L^2(\R)$ such that $\frac{\mathrm{d}f}{\mathrm{dx}}$ belongs to $L^2(\R)$.
We denote by $H^{-1}(\R)$ the usual dual space of $H^1(\R)$.

We denote $L^2(0,T;L^2(\R))$ the set of measurable functions $f(t,x)$ s.t.
$$
\int_0^T\int_\R|f(t,x)|^2dxdt<\infty.$$
For $f\in L^2(0,T;L^2(\R))$ we denote by $||f||^2$ the above quantity.

We denote by  $L^2(0,T;H^1(\R))$ the set of mesurable functions 
$f(t,x)$ such that for any $t\in[0,T]$ the function $f(t,\cdot)$ is in $H^1(\R)$ and 
$$
\int_0^T\int_\R|f(t,x)|^2dxdt+\int_0^T\int_\R\big| \frac{\mathrm{d}f}{\mathrm{dx}}(t,x) \big|^2dxdt<\infty.$$

For a function $f\in L^2(0,T;L^2(\R))$ we denote by $\frac{\mathrm{d}f}{\mathrm{dt}}$ its first derivative with respect to time in the distribution sense (see Remark \ref{rem:der-t-distru} for some details). 

We will denote by $H^{1,1}(E)$ the set of functions in $L^2(0,T;H^1(\R))$ such that $\frac{\mathrm{d}f}{\mathrm{dt}}$ belongs to
$ L^2(0,T;L^2(\R))$. 
It is equipped with the norm $f\mapsto\Big( ||f||^2 + \big|\big| \frac{\mathrm{d}f}{\mathrm{dx}}  \big|\big|^2 
+ \big|\big| \frac{\mathrm{d}f}{\mathrm{dt}} \big|\big|^2 
   \Big)^{1/2}$.
   
   Finally we will denote by $H^{1,1}_0(E)$ the closure in 
   $H^{1,1}(E)$ of $C^{\infty,\infty}_{c,c}(E)$
    with respect to the just above defined norm.
Note that for $\varphi\in H^{1,1}_{0}(E)$ we have $\varphi(0,\cdot)=\varphi(T,\cdot)=0$, and $\lim_{|x|\to\infty}\varphi(t,x)=0$, $t\in[0,T]$.

For $0<m<M<\infty$ we denote by $\Theta(m,M)$ the set of functions $\sigma:[0,T]\times\R\to [m,M]$ that are measurable. We denote
by $\Xi(M)$ the set of functions $b:[0,T]\times\R\to [-M,M]$ that are measurable. 

\vspace{0.2cm}

Let $I\in \N^*=\N\setminus\{0\}$. For each $1\leq i\leq I$, let $x_i:[0,T]\to\R$ be a continuous function of bounded variation,
and assume that $x_i(t)<x_j(t)$ for all $t\in [0,T]$ and all $1\leq i<j\leq I$.

Given such a family $(x_i)_{i=1}^I$ we will denote $D^x_0=\{(t,z)\in [0,T] \times\R:\,z<x_1(t)\}$, $D^x_I=\{(t,z)\in [0,T]\times\R:\,z>x_I(t)\}$ and, for any $1\leq i\leq I-1$,
$D^x_i=\{(t,z)\in [0,T] \times\R:\,x_i(t)<z<x_{i+1}(t)\}$.

We will denote 
\begin{equation}
\label{eq:def-Delta-x}
\Delta_{{\bf x}}=\{(t,x_i(t)):0\leq t\leq T\}_{i=1}^I\subset E
\end{equation}
(this will be clear from the context which family $(x_i)_{i=1}^I$ is dealt with).


\vspace{0.1cm}
We will say that a space-time function $\sigma$ in $\Theta(m,M)$ satisfies the 
${\bf H}^{(x_i)}$-hypothesis if:
$$
 \sigma\in C^{0,1}(E\setminus\Delta_{\bf x}), \quad \max_{1\leq i\leq I}\sup_{t \in [0,T]}\sup_{x_i(t)< x< x_{i+1}(t)}|\sigma'_x(t,x)|<\infty$$
$$
\text{and}\quad \sup_{t\in [0,T]}\sup_{ x< x_1(t)}|\sigma'_x(t,x)|<\infty,\quad \sup_{t\in [0,T]}\sup_{ x> x_I(t)}|\sigma'_x(t,x)|<\infty.
$$

We define the ${\bf AJ}^{(x_i)}$-hypothesis (AJ for Average Jumps) in the following way: a bounded space-time function $\sigma$ satisfies the ${\bf AJ}^{(x_i)}$-hypothesis if,
$$
\exists 0<C<\infty, \;\; \infty>C\int_0^T\sum_{x\leq z\leq y}|\sigma^2(s,z+)-\sigma^2(s,z-)|ds
\geq \sum_{x\leq z\leq y}|\sigma^2(t,z+)-\sigma^2(t,z-)|, $$
for all $x,y\in\R$, $t\in [0,T]$.

\begin{remark}
This roughly speaking, means that the size of the jumps of $\sigma^2$ are not allowed to go too far from a kind of time-averaged size jump. See Remark \ref{rem:surHhyp} below for a comment on why this technical hypothesis is needed.
\end{remark}

  A space-time function $g$  in $\Theta(m,M)$, in $\Xi(M)$ or in
$C_c(E)$ will be said to satisfy the ${\bf H}^{(t)}$-hypothesis if

$$
 g\in C^{1,0}(E\setminus\Delta_{\bf x}), \quad \max_{1\leq i\leq I}\sup_{t \in [0,T]}\sup_{x_i(t)< x< x_{i+1}(t)}|g'_t(t,x)|<\infty$$
$$
\text{and}\quad \sup_{t\in [0,T]}\sup_{ x< x_1(t)}|g'_t(t,x)|<\infty,\quad \sup_{t\in [0,T]}\sup_{ x> x_I(t)}|g'_t(t,x)|<\infty
$$
(this hypothesis will be used for the study of the PDE aspects).

Note that the same kind of notations will be used for a family $y_i:[0,T]\to\R$, $1\leq i\leq I$, satisfying the same assumptions (for example in Corollary \ref{cor-peskirmulti} below).

Finally, we fix notations for two sets of type $\Delta_{\bf x}$ that play a special role in the sequel. Those are
\begin{equation}
\label{eq:def-Delta}
\Delta=\{(t,i):\,0\leq t\leq T\}_{i=1}^I\subset E\quad\text{ and }\quad \Delta_0=\{(t,0):\,0\leq t\leq T\}.
\end{equation}

\vspace{0.2cm}
For any function $f:\R\to\R$ and any $x\in\R$ such that $f(x+)=\lim_{y\downarrow x}f(y)$  and
$f(x-)=\lim_{y\uparrow x}f(y)$ both exist,
we will sometimes use the following notations :
$$
f_\pm(x):= \frac{f(x+)+f(x-)}{2}\quad\text{and}\quad \vartriangle f(x)=\frac{f(x+)-f(x-)}{2}.$$
In particular if $f:\R\to\R$ is differentiable, except on a finite number of points $x_1<\ldots <x_I$, where $f'(x_i\pm)$, $1\leq i\leq I$
exist, note that the function $f'_{\pm}$ is defined on the whole real line and represents the absolute part of $f'(dx)$, the derivative of $f$ in the generalized sense; in other words,
$$
f'(dx)=f'_{\pm}(x)dx+\sum_{i=1}^I2\vartriangle f(x_i)\delta_{x_i}(dx).
$$

\section{Preliminaries and known results concerning the stochastic aspects of the problem}
\label{sec:prelim}

\subsection{Well-posedness of the martingale problem associated to discontinuous coefficients}
\label{ss:pbmart}

Of crucial importance is the following result, to be found in \cite{stroockvar}.

\begin{theorem}[\cite{stroockvar}, Exercise 7.3.3]
\label{thm:pbmart}
Let $\bar{\sigma}\in\Theta(\bar{m},\bar{M})$ and $\bar{b}\in\Xi(\bar{M})$ (for some $0<\bar{m}<\bar{M}<\infty$). Then the martingale problem associated to $\bar{\sigma}^2$ and $\bar{b}$ is well-posed.
\end{theorem}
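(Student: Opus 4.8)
Write $\bar a=\bar\sigma^2$, a measurable function on $[0,T]\times\R$ with values in $[\bar m^2,\bar M^2]$, and set $L_s=\tfrac12\bar a(s,\cdot)\,\partial_{xx}+\bar b(s,\cdot)\,\partial_x$. The goal is to show that for each $x_0\in\R$ there is exactly one probability measure on $C([0,T],\R)$ under which the canonical process $X$ satisfies $X_0=x_0$ and
$$
\varphi(X_t)-\varphi(X_0)-\int_0^t (L_s\varphi)(X_s)\,ds\ \ \text{is a martingale, for every }\varphi\in C^\infty_c(\R).
$$
The plan is to reproduce the classical one-dimensional argument (this is \cite{stroockvar}, Exercise 7.3.3), whose two ingredients --- compactness estimates coming from uniform ellipticity, and the solvability of the resolvent equation in one space variable --- reappear later in the paper.

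\emph{Existence.} I would regularise $\bar a,\bar b$ into smooth functions $\bar a_n,\bar b_n$ still valued in $[\bar m^2,\bar M^2]$ and $[-\bar M,\bar M]$ and converging to $\bar a,\bar b$ Lebesgue-a.e.\ on $[0,T]\times\R$. For each $n$ the martingale problem for $(\bar a_n,\bar b_n)$ has a unique solution $P_n$, under which $X$ is a continuous semimartingale with drift bounded by $\bar M$ and bracket $d\langle X\rangle_s=\bar a_n(s,X_s)\,ds\le\bar M^2\,ds$; these uniform bounds give a uniform fourth-moment estimate on the increments of $X$, hence tightness of $(P_n)_n$ by Kolmogorov's criterion, and I would let $P$ be a subsequential weak limit. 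The only delicate point in checking that $P$ solves the martingale problem for $(\bar a,\bar b)$ is that $\bar a_n\to\bar a$, $\bar b_n\to\bar b$ hold only a.e.; this is handled with uniform ellipticity, which supplies a one-dimensional Krylov-type estimate $\E^{P_n}\big[\int_0^t g(s,X_s)\,ds\big]\le C\int_0^t s^{-1/2}\,\|g(s,\cdot)\|_{L^1(\R)}\,ds$, uniform in $n$ and inherited by $P$. Hence the time the path spends in the Lebesgue-null set where the regularisations fail to converge is negligible, and one may pass to the limit in the martingale identity, after first approximating $\bar a$ by continuous functions in the relevant weighted $L^1$ space.

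\emph{Uniqueness.} This is where the one-dimensional setting is essential. Let $P$ be any solution from $x_0$. For $\lambda>0$ and $f\in C_b(E)$, I would consider the terminal-value problem $\partial_t u+L_su-\lambda u=-f$ on $[0,T)\times\R$ with $u(T,\cdot)=0$; in one space dimension the classical $L^p$-theory of parabolic equations with bounded measurable elliptic coefficients provides a bounded solution $u$ that is $C^1$ in space, has $\partial_t u$ and $\partial_{xx}u$ locally $p$-integrable for every finite $p$, and satisfies the equation a.e. Since $\bar a\ge\bar m^2>0$, the space-time occupation measure of $X$ under $P$ is absolutely continuous with respect to Lebesgue measure, so a time-inhomogeneous It\^o--Tanaka formula applies to $e^{-\lambda t}u(t,X_t)$ and the a.e.\ identity satisfied by $u$ can be used along the trajectories; after a localisation this yields
$$
u(0,x_0)=\E^{P}\Big[\int_0^T e^{-\lambda t}f(t,X_t)\,dt\Big],
$$
a quantity that does not depend on the particular solution $P$. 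Letting $f$ run over $C_b(E)$ and $\lambda$ over $(0,\infty)$, and using right-continuity of paths, I would conclude that the one-dimensional time-marginals of $X$ under $P$ are determined by $(\bar a,\bar b)$; the disintegration lemma for martingale problems (\cite{stroockvar}, Ch.~6) --- the regular conditional law of a solution given $\mathcal{F}_t$ again solves the martingale problem from $X_t$ at time $t$ --- then propagates this to all finite-dimensional distributions by induction on the number of time points. Hence the solution is unique.

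\emph{The hard part.} Everything rests on the uniqueness step, and within it on producing the resolvent $u$ with enough regularity (roughly, $C^1$ in space with $\partial_{xx}u$ a genuine function rather than a distribution) to be inserted into an It\^o-type formula although $\bar a$ and $\bar b$ are merely measurable. This is precisely where one exploits that the space variable is one-dimensional --- so that $L_s$ admits Feller's scale-and-speed form and the associated parabolic resolvent equation can be treated by elementary one-dimensional means --- together with the absolute continuity of the occupation measure forced by uniform ellipticity, which is what lets the a.e.-defined coefficients be used along trajectories.
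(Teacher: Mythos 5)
The paper does not prove this statement at all: it is quoted verbatim from Stroock--Varadhan (Exercise 7.3.3) and used as a black box, so there is no in-paper argument to compare against. Your sketch is a faithful reconstruction of the standard argument behind that exercise --- existence by mollification, tightness and a uniform Krylov/occupation-density estimate to pass to the a.e.-limit in the coefficients; uniqueness by solving the resolvent equation in $W^{1,2}_p$ (where $d=1$ is what makes the $L^p$ estimate for merely measurable $a$ go through), an It\^o--Krylov formula to pin down the one-dimensional marginals, and the regular-conditional-probability induction to upgrade to all finite-dimensional laws --- and you correctly isolate the resolvent regularity as the crux. The only imprecision worth flagging is the claim that $\partial_t u,\partial_{xx}u$ are locally $p$-integrable ``for every finite $p$''; the one-dimensional theory cleanly gives $p=2$, which is exactly the exponent $d+1$ needed for the Krylov estimate, and that is all the argument requires.
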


The first important consequence of this result is that the for any $(s,y)\in E$ the SDE
\begin{equation*}
dY_t=\bar{\sigma}(t,Y_t)dW_t+\bar{b}(t,Y_t)dt,\quad t\in [s,T],\quad Y_s=y
\end{equation*}
has a weak solution (\cite{stroockvar}, Theorem 4.5.1), unique in law (\cite{stroockvar}, Theorem 5.3.2).  The second one is that this weak solution is (time inhomogeneous) Markov (\cite{stroockvar}, Theorem 6.2.2; see also the forthcoming Subsection~\ref{ss:markov-inho} for comments on time inhomogeneous Markov processes).

\begin{remark}
Note that the result of Theorem \ref{thm:pbmart} is available for time-dependent coefficients, only because the dimension of the space variable is $d=1$. For $d=2$, up to our knowledge, such results exist but with a time homogeneous diffusion matrix (\cite{stroockvar}, Exercise 7.3.4).
\end{remark}

\subsection{Pathwise uniqueness results and strong solutions of time inhomogeneous SDEs with discontinuous coefficients}

We have the following results.

\begin{theorem}[J.-F. Le Gall, \cite{legall}]
\label{thm:legall}
Let $\bar{\sigma}\in\Theta(\bar{m},\bar{M})$ and $\bar{b}\in\Xi(\bar{M})$ for some $0<\bar{m}<\bar{M}<\infty$. Assume further that there exists a strictly increasing
function $f:\R\to\R$ such that
\begin{equation}
\label{eq:f}
|\bar{\sigma}(t,x)-\bar{\sigma}(t,y)|^2\leq |f(x)-f(y)|,\quad\forall (t,x,y)\in [0,T]\times\R\times\R.
\end{equation}
Then the SDE
\begin{equation}
\label{eq:Y}
dY_t=\bar{\sigma}(t,Y_t)dW_t+\bar{b}(t,Y_t)dt,\quad t\in [0,T],\quad Y_0=y_0
\end{equation}
enjoys pathwise uniqueness.

\end{theorem}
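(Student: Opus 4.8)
This is Le Gall's pathwise uniqueness theorem \cite{legall}, and since the modulus condition \eqref{eq:f} involves a function $f$ that does not depend on time, the plan is to run his argument while checking that the time-inhomogeneity is harmless. Let $Y^1,Y^2$ be two solutions of \eqref{eq:Y} carried by the same stochastic basis, driven by the same Brownian motion $W$, with $Y^1_0=Y^2_0=y_0$, and set $Z:=Y^1-Y^2$. Then $Z$ is a continuous semimartingale with $Z_0=0$, martingale part $M_t=\int_0^t(\bar\sigma(s,Y^1_s)-\bar\sigma(s,Y^2_s))\,dW_s$ (a true martingale, since $\bar\sigma$ is bounded) and finite variation part $\int_0^t(\bar b(s,Y^1_s)-\bar b(s,Y^2_s))\,ds$, so that $d\langle Z\rangle_s=(\bar\sigma(s,Y^1_s)-\bar\sigma(s,Y^2_s))^2\,ds$.

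The core of the proof is to show that $L^0_t(Z)=0$ for every $t$, $\P$-a.s. This is the only place where \eqref{eq:f} enters, and it enters through the bound $(\bar\sigma(s,Y^1_s)-\bar\sigma(s,Y^2_s))^2\le|f(Y^1_s)-f(Y^2_s)|$, which holds for every $s$ precisely because $f$ is time-independent. Writing the symmetric local time as a limit of occupation integrals one gets
\[
L^0_t(Z)=\P\text{-}\lim_{\varepsilon\downarrow 0}\frac{1}{2\varepsilon}\int_0^t\1_{\{|Z_s|<\varepsilon\}}\,(\bar\sigma(s,Y^1_s)-\bar\sigma(s,Y^2_s))^2\,ds\;\le\;\P\text{-}\lim_{\varepsilon\downarrow 0}\frac{1}{2\varepsilon}\int_0^t\1_{\{|Z_s|<\varepsilon\}}\,|f(Y^1_s)-f(Y^2_s)|\,ds,
\]
and one then shows, exactly as in \cite{legall}, that the right-hand side vanishes: on $\{|Z_s|<\varepsilon\}$ the quantity $|f(Y^1_s)-f(Y^2_s)|$ is controlled by the oscillation of the monotone function $f$ over an interval of length $2\varepsilon$, and a careful estimate — using the occupation times formula for $Y^2$, the lower bound $d\langle Y^2\rangle_s\ge\bar m^2\,ds$, and the fact that $f$ is continuous off an at most countable set while the occupation measure of $Y^2$ charges no point — yields the claim. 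I expect this estimate to be the main obstacle; everything else is bookkeeping.

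Granting $L^0_\cdot(Z)\equiv 0$, Tanaka's formula gives $|Z_t|=\int_0^t\sgn(Z_s)\,dM_s+\int_0^t\sgn(Z_s)\,(\bar b(s,Y^1_s)-\bar b(s,Y^2_s))\,ds$. When $\bar b\equiv 0$ the proof ends at once: the right-hand side is a continuous true martingale, so $|Z|$ is a nonnegative martingale with $\E|Z_t|=\E|Z_0|=0$, hence $Z\equiv 0$. For a general bounded measurable drift one reduces to the driftless case by a scale-type space transform, as in \cite{legall}: for each fixed $t$ the map $x\mapsto\int_0^x\exp(-2\int_0^y\bar b(t,z)\,\bar\sigma(t,z)^{-2}\,dz)\,dy$ is, on every compact interval and uniformly in $t$, bi-Lipschitz with Lipschitz derivative; it removes the drift when applied fibrewise to $Y^1$ and $Y^2$ stopped at the exit of $[-R,R]$, and the transformed dispersion coefficient still obeys an estimate of the type \eqref{eq:f}; applying the driftless case there and letting $R\to\infty$ gives pathwise uniqueness for \eqref{eq:Y}. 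The only effect of the time-inhomogeneity is that this transform depends on $t$, which changes nothing since \eqref{eq:f} is uniform in $t$. Thus the whole argument reduces, exactly as in \cite{legall}, to the vanishing of $L^0(Z)$.
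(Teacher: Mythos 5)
First, a structural point: the paper does not prove Theorem \ref{thm:legall} at all --- it is quoted from \cite{legall} as a black box (only Corollary \ref{cor-legall} is proved there, by exhibiting the function $f$). So there is no in-paper argument to compare with, and your proposal has to stand on its own as a reconstruction of Le Gall's proof in the time-inhomogeneous setting. The skeleton is the right one: everything reduces to $L^0(Y^1-Y^2)\equiv 0$, and the only role of \eqref{eq:f} is the bound $d\langle Z\rangle_s\le|f(Y^1_s)-f(Y^2_s)|\,ds$, which indeed survives time-inhomogeneity because $f$ does not depend on $t$.

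Two steps are genuinely problematic, however. (i) The heuristic you give for the key lemma is misleading: $f$ is only assumed strictly increasing, so it may jump, and the oscillation of $f$ over an interval of length $2\varepsilon$ need not be small. The actual estimate writes $f(Y^1_s)-f(Y^2_s)=\int_\R\1_{\{Y^2_s<a\le Y^1_s\}}\,df(a)$, notes that on $\{0<Z_s<\varepsilon\}$ the indicator forces $a-\varepsilon<Y^2_s<a$, and controls $\varepsilon^{-1}\int_0^t\1_{\{a-\varepsilon<Y^2_s<a\}}ds$ through the occupation times formula for $Y^2$ and $d\langle Y^2\rangle_s\ge\bar m^2\,ds$. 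Since you flag this as ``the main obstacle'' and defer it entirely to \cite{legall}, the central step is not actually supplied. (ii) More seriously, your drift-removal step fails in this setting. The map $x\mapsto\int_0^x\exp\big(-2\int_0^y\bar b(t,z)\bar\sigma(t,z)^{-2}dz\big)\,dy$ depends on $t$ only measurably (both $\bar b$ and $\bar\sigma$ are merely measurable in time), so it has no time derivative and no It\^o-type formula applies to $g(t,Y_t)$; the ``fibrewise'' application generates an uncontrolled $\partial_t g$ term. This is exactly the one place where time-inhomogeneity is \emph{not} harmless for the route you chose. Le Gall's actual argument avoids the transform altogether: once $L^0(Z)=0$, Tanaka's formula gives $(Z_t)^+=\int_0^t\1_{\{Z_s>0\}}dZ_s$, so $Y^1\vee Y^2=Y^2+Z^+$ solves \eqref{eq:Y} with the same Brownian motion and initial condition; uniqueness in law (Theorem \ref{thm:pbmart}, valid for bounded measurable time-dependent coefficients in dimension one) then yields $\E[(Y^1\vee Y^2)_t]=\E[Y^1_t]$, hence $Y^1\vee Y^2=Y^1$ a.s., and by symmetry $Y^1=Y^2$. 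Your final two steps should be replaced by this argument.
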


As an immediate consequence we get the following corollary.

\begin{corollary}
\label{cor-legall}

Let $I\in \N^*$. For each $1\leq i\leq I$, let $y_i:[0,T] \to\R$ be a continuous function of bounded variation,
and assume that $y_i(t)<y_j(t)$ for all $t\in [0,T]$ and all $1\leq i<j\leq I$. 

Let $\bar{\sigma}\in\Theta(\bar{m},\bar{M})$ and $\bar{b}\in\Xi(\bar{M})$ for some $0<\bar{m}<\bar{M}<\infty$. 

The SDE \eqref{eq:Y} has a weak solution.

Assume further that $\bar{\sigma}$ satisfies the $\mathbf{H}^{(y_i)}$ and $\mathbf{AJ}^{(y_i)}$-hypothesis.

Then the SDE \eqref{eq:Y} enjoys pathwise uniqueness and 
has in fact a unique strong solution.

\end{corollary}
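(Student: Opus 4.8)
Existence of a weak solution of \eqref{eq:Y} is immediate: since $\bar\sigma\in\Theta(\bar m,\bar M)$ and $\bar b\in\Xi(\bar M)$, Theorem~\ref{thm:pbmart} gives well-posedness of the martingale problem for $\bar\sigma^2$ and $\bar b$, which (applied with $s=0$, $y=y_0$, as recalled right after that theorem) yields a weak solution of \eqref{eq:Y}, unique in law; the curves $y_i$ are not used for this part. It thus remains to prove pathwise uniqueness under the additional hypotheses $\mathbf{H}^{(y_i)}$ and $\mathbf{AJ}^{(y_i)}$; once this is granted, weak existence together with pathwise uniqueness yields, by the Yamada--Watanabe theorem, a (then necessarily unique) strong solution. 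To obtain pathwise uniqueness I plan to invoke Theorem~\ref{thm:legall}, that is, to construct a strictly increasing $f:\R\to\R$ such that \eqref{eq:f} holds.

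\textbf{Construction of $f$.} Because $\bar\sigma$ is valued in $[\bar m,\bar M]$, one has $|\bar\sigma(t,x)-\bar\sigma(t,y)|\le\bar M-\bar m$ and $|\bar\sigma^2(t,x)-\bar\sigma^2(t,y)|=|\bar\sigma(t,x)-\bar\sigma(t,y)|\,(\bar\sigma(t,x)+\bar\sigma(t,y))\ge 2\bar m\,|\bar\sigma(t,x)-\bar\sigma(t,y)|$, whence for all $t,x,y$,
\begin{equation*}
|\bar\sigma(t,x)-\bar\sigma(t,y)|^2\le\frac{\bar M-\bar m}{2\bar m}\,|\bar\sigma^2(t,x)-\bar\sigma^2(t,y)|.
\end{equation*}
So it suffices to dominate $|\bar\sigma^2(t,x)-\bar\sigma^2(t,y)|$ by $g(y)-g(x)$ (for $x<y$) for some strictly increasing $g$, and then take $f=\frac{\bar M-\bar m}{2\bar m}g$. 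Under $\mathbf{H}^{(y_i)}$ the function $\bar\sigma(t,\cdot)$ is, for each $t$, uniformly Lipschitz on each of the strips cut out by the curves (so its one-sided limits at $y_1(t)<\dots<y_I(t)$ exist and those are its only $z$-discontinuities), and $|(\bar\sigma^2)'_x(t,\cdot)|=|2\bar\sigma\,\bar\sigma'_x(t,\cdot)|\le K'$ off the curves, where $K':=2\bar M$ times the (finite) $\mathbf{H}^{(y_i)}$-bound on $|\bar\sigma'_x|$. Splitting $\bar\sigma^2(t,\cdot)$ on $[x,y]$ into its absolutely continuous part and the jumps crossed (the value at a curve point being taken between the one-sided limits, as in the symmetric local time formulation) gives, for $x<y$,
\begin{equation*}
|\bar\sigma^2(t,x)-\bar\sigma^2(t,y)|\le K'(y-x)+\sum_{x\le z\le y}|\bar\sigma^2(t,z+)-\bar\sigma^2(t,z-)|.
\end{equation*}
Define the measure $\mu$ on $\R$ by $\mu(A)=\int_0^T\sum_{z\in A}|\bar\sigma^2(s,z+)-\bar\sigma^2(s,z-)|\,ds$; since there are only $I$ curves and $\bar\sigma^2$ is bounded, $\mu$ is finite, with $\mu(\R)\le TI(\bar M^2-\bar m^2)$. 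The hypothesis $\mathbf{AJ}^{(y_i)}$ is precisely the statement that $\sum_{x\le z\le y}|\bar\sigma^2(t,z+)-\bar\sigma^2(t,z-)|\le C\,\mu([x,y])$ for all $x\le y$ and all $t$. Now put $H(z):=\tfrac12\big(\mu((-\infty,z))+\mu((-\infty,z])\big)$ and $g(z):=K'z+2C\,H(z)$; then for $x<y$,
\begin{equation*}
g(y)-g(x)=K'(y-x)+2C\mu((x,y))+C\mu(\{x\})+C\mu(\{y\})\ge K'(y-x)+C\mu([x,y]),
\end{equation*}
and also $g(y)-g(x)\ge K'(y-x)>0$, so $g$ is strictly increasing. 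Combining the preceding displays, $|\bar\sigma(t,x)-\bar\sigma(t,y)|^2\le\frac{\bar M-\bar m}{2\bar m}(g(y)-g(x))=f(y)-f(x)$ for $x<y$ and all $t$, i.e.\ \eqref{eq:f} holds with the strictly increasing $f=\frac{\bar M-\bar m}{2\bar m}g$. Theorem~\ref{thm:legall} then yields pathwise uniqueness, and the strong solution follows as explained above.

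\textbf{Main obstacle.} The one genuinely non-formal point is that the interfaces move, so the positions of the discontinuities of $\bar\sigma(t,\cdot)$ change with $t$: to build a single time-independent increasing dominating function one must control the pointwise-in-time jumps of $\bar\sigma^2$ by one fixed finite measure, which is exactly what $\mathbf{AJ}^{(y_i)}$ supplies (in the non-moving, cylindrical situation this is automatic from $\mathbf{H}^{(y_i)}$, so only $\mathbf{H}^{(y_i)}$ is needed there). The only slightly delicate bookkeeping is at the two endpoints $x$ and $y$ when some curve sits at the level $x$ or $y$ for a set of times of positive Lebesgue measure; this is the reason for using the symmetrised distribution function $H$ of $\mu$ rather than either one-sided version. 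All the remaining pieces --- the Lipschitz estimates from $\mathbf{H}^{(y_i)}$, the finiteness of $\mu$, and the Yamada--Watanabe passage from weak existence plus pathwise uniqueness to a unique strong solution --- are standard.
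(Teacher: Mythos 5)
Your proof is correct and follows essentially the same route as the paper: weak existence from the well-posed martingale problem, reduction to Le Gall's criterion \eqref{eq:f} by comparing $|\bar\sigma(t,x)-\bar\sigma(t,y)|^2$ with $|\bar\sigma^2(t,x)-\bar\sigma^2(t,y)|$, construction of a strictly increasing $f$ as a linear term plus the time-integrated jump measure supplied by $\mathbf{AJ}^{(y_i)}$, and Yamada--Watanabe. The only (harmless) differences are that the paper gets the comparison with constant $1$ via the identity $a+b-2(a\wedge b)=|a-b|$ rather than your $(\bar M-\bar m)/(2\bar m)$ bound, and that your symmetrised distribution function $H$ treats the endpoint atoms of the jump measure a bit more carefully than the paper's $\sum_{z\le x}$.
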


\begin{proof}
As already pointed out in Subsection \ref{ss:pbmart} equation \eqref{eq:Y} has a weak solution. 
We aim now at using Theorem~\ref{thm:legall}.
Then the well known results of Yamada and Watanabe (\cite{yamada}) will provide the desired conclusion.

First we notice that for all $(t,x,y)\in [0,T]\times \R\times \R$,
$$
|\bar{\sigma}(t,x)-\bar{\sigma}(t,y)|^2\leq \bar{\sigma}^2(t,x)+\bar{\sigma}^2(t,y)-2(\bar{\sigma}^2(t,x)\wedge\bar{\sigma}^2(t,y))
= |\bar{\sigma}^2(t,y)-\bar{\sigma}^2(t,x)|.
$$
Thus, to get the result by Theorem \ref{thm:legall} it suffices to find a stricly increasing
function $f:\R\to\R$ such that
\begin{equation}
\label{eq:fbis}
|\bar{\sigma}^2(t,x)-\bar{\sigma}^2(t,y)|\leq |f(x)-f(y)|,\quad\forall (t,x,y)\in [0,T]\times\R\times\R.
\end{equation}

Using the $\mathbf{H}^{(y_i)}$-hypothesis, we set
$$
K=\max\big\{ \,  \sup_{t\in [0,T]}\sup_{ x< y_1(t)}|(\bar{\sigma}^2)'_x(t,x)|,
\, \max_{1\leq i\leq I}\sup_{t\in [0,T]}\sup_{y_i(t)\leq x< y_{i+1}(t)}|(\bar{\sigma}^2)'_x(t,x)|,
\, \sup_{t\in [0,T]}\sup_{ x\geq y_I(t)}|(\bar{\sigma}^2)'_x(t,x)|   \, \big\}<\infty.$$

One can define a strictly increasing function $f:\R\to\R$ by
$$
f(x)=Kx+C\sum_{z\leq x}\int_0^T|\bar{\sigma}^2(s,z+)-\bar{\sigma}^2(s,z-)|ds,$$
 where $C$ is the constant involved in the $\mathbf{AJ}^{(y_i)}$-hypothesis (note that as $\sum_{z\leq x}|\bar{\sigma}^2(s,z+)-\bar{\sigma}^2(s,z-)|$ is finite and bounded -for any $s$-, Fubini's Theorem ensures that $f$ takes finite values). Then one can use the
  $\mathbf{H}^{(y_i)}$ and $\mathbf{AJ}^{(y_i)}$-hypotheses to check that for $x<y$,
 $$
 \begin{array}{lll}
 |\bar{\sigma}^2(t,x)-\bar{\sigma}^2(t,y)|&\leq& K(y-x)+\sum_{x\leq z\leq y}|\bar{\sigma}^2(t,z+)-\bar{\sigma}^2(t,z-)|\\
 \\
 &\leq & K(y-x)+ C\int_0^T\sum_{x\leq z\leq y}|\bar{\sigma}^2(s,z+)-\bar{\sigma}^2(s,z-)|ds\\
 \\
 &=&f(y)-f(x)=|f(y)-f(x)|.
 \end{array} 
 $$
 Thus $f$  satisfies \eqref{eq:fbis}.  
\end{proof}

\begin{remark}
\label{rem:surHhyp}
It would be tempting to set $f(x)=Kx+\sum_{z\leq x}\sup_{s\in[0,T]}|\bar{\sigma}^2(s,z+)-\bar{\sigma}^2(s,z-)|$ in order to try to check 
\eqref{eq:f}. But
as $\sup_{s\in[0,T]}|\bar{\sigma}^2(s,z+)-\bar{\sigma}^2(s,z-)|$ could be non zero for uncountably many values of~$z$ the function $f$ could be not well defined as a function from $\R$
to $\R$. This justifies our assumption $\mathbf{AJ}^{(y_i)}$.

\end{remark}

\subsection{The It\^o-Peskir formula}
\label{ssec:peskir}

Our fundamental tool is the following result due to G. Peskir (see \cite{peskir}).

\begin{theorem}[Time inhomogeneous symmetric Itô-Tanaka formula (\cite{peskir})]
\label{thm-urpeskir}
Let $Y$ be a continuous $\R$-valued semimartingale. Let $\gamma:[0,T]\to\R$ be a continuous function of bounded variation.

Denote 
$C=\{(t,x)\in [0,T]\times {\mathbb R}~:~x<\gamma(t)\}$ and $D=\{(t,x)\in [0,T]\times {\mathbb R}~:~x>\gamma(t)\}$. 

Let $r\in C(E)\cap C^{1,2}(\overline{C})\cap C^{1,2}(\overline{D})$.
Then, for any $0\leq t<T$,
\begin{equation}
\label{eq:peskir}
\begin{array}{lll}
r(t,Y_t)&=&\ds r(0,Y_0)+\int_0^t\frac 1 2(r'_t(s,Y_s+)+r'_t(s,Y_s-))ds
+\int_0^t\frac 1 2(r'_y(s,Y_s+)+r'_y(s,Y_s-) )dY_s\\
\\
&&\ds +\frac 1 2\int_0^tr''_{yy}(s,Y_s)\1_{Y_s\neq \gamma(s)}d\langle Y\rangle_s
+\frac 1 2 \int_0^t(r'_y(s,Y_s+)-r'_y(s,Y_s-))dL^\gamma_s(Y).\\
\end{array}
\end{equation}
\end{theorem}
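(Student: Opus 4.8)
The plan is to reduce \eqref{eq:peskir} to the classical It\^o formula applied on each side of the curve, the whole difficulty being concentrated in one ``boundary term'' that is handled by a symmetric smoothing of an indicator function.

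\textbf{Reduction.} First I would set $Z_t:=Y_t-\gamma(t)$, which is again a continuous semimartingale (since $\gamma$ is continuous of bounded variation), with $\langle Z\rangle=\langle Y\rangle$ and $\{Y_t\ge\gamma(t)\}=\{Z_t\ge 0\}$. Using that $r\in C(E)$ and is $C^{1,2}$ up to the boundary of each closed region $\overline{C}$, $\overline{D}$, I would produce functions $r_C,r_D\in C^{1,2}(E)$ with $r_C=r$ on $\overline{C}$ and $r_D=r$ on $\overline{D}$ (a one-sided extension: when $\gamma\in C^1$ an explicit reflection across $x=\gamma(t)$ does it, and for a merely bounded-variation $\gamma$ one localises as in \cite{peskir}). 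Setting $H:=r_D-r_C\in C^{1,2}(E)$, continuity of $r$ forces $H(t,\gamma(t))=r_D(t,\gamma(t))-r_C(t,\gamma(t))=0$ for all $t$, whence
$$
r(t,Y_t)=r_C(t,Y_t)+H(t,Y_t)\,\mathbf{1}_{\{Y_t\ge\gamma(t)\}}.
$$
To $r_C(t,Y_t)$ one applies the classical It\^o formula directly, so the whole content is the computation of $d\bigl[H(t,Y_t)\mathbf{1}_{\{Y_t\ge\gamma(t)\}}\bigr]$.

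\textbf{The boundary term.} For that term I would approximate $\mathbf{1}_{\{\,\cdot\,\ge 0\}}$ by smooth nondecreasing $\psi_n$ with $\psi_n(0)=\tfrac12$ and $\psi_n'$ even and supported in $[-1/n,1/n]$, so that $\psi_n(Z_s)\to\mathbf{1}_{\{Z_s>0\}}+\tfrac12\mathbf{1}_{\{Z_s=0\}}$. Writing $U_t:=H(t,Y_t)$ (a semimartingale by It\^o) and combining It\^o for $\psi_n(Z_t)$ with integration by parts for the product $U_t\psi_n(Z_t)$, one obtains an identity with four groups of terms: $\int_0^t\psi_n(Z_s)\,dU_s$; $\int_0^t U_s\psi_n'(Z_s)\,dZ_s$; $\tfrac12\int_0^t U_s\psi_n''(Z_s)\,d\langle Z\rangle_s$; and the covariation $\int_0^t H'_y(s,Y_s)\psi_n'(Z_s)\,d\langle Z\rangle_s$. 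Passing to the limit $n\to\infty$: in the second group one uses $H(t,\gamma(t))=0$, hence $|U_s|\le C|Z_s|$ near the curve, so $Z_s\psi_n'(Z_s)\to0$ boundedly and pointwise, which kills the bounded-variation part by dominated convergence, while the martingale part has quadratic variation $\le C\int_{-1/n}^{1/n}L^a_t(Z)\,da\to0$. In the third and fourth groups one replaces $H(s,Y_s)$ by $H'_y(s,\gamma(s))Z_s$ and $H'_y(s,Y_s)$ by $H'_y(s,\gamma(s))$ up to errors that vanish (by uniform continuity of $H'_y$ and the occupation bound above), and then invokes the occupation times formula $\int_0^t g(Z_s)\,d\langle Z\rangle_s=\int_{\mathbb R}g(a)L^a_t(Z)\,da$ together with $\psi_n'(a)\,da\to\delta_0$ and $a\psi_n''(a)\,da\to-\delta_0$ weakly (the symmetry of $\psi_n$ being exactly what makes the limiting local time the \emph{symmetric} one). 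These two groups then converge to $-\tfrac12\int_0^tH'_y(s,\gamma(s))\,dL^\gamma_s(Y)+\int_0^tH'_y(s,\gamma(s))\,dL^\gamma_s(Y)=\tfrac12\int_0^tH'_y(s,\gamma(s))\,dL^\gamma_s(Y)$; since $H'_y(s,\gamma(s))=r'_y(s,\gamma(s)+)-r'_y(s,\gamma(s)-)$ and $L^\gamma(Y)$ charges only $\{Y_s=\gamma(s)\}$, this is precisely the last term of \eqref{eq:peskir}.

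\textbf{Recombination and main obstacle.} Finally I would recombine $\lim_n\int_0^t\psi_n(Z_s)\,dU_s=\int_0^t\bigl(\mathbf{1}_{\{Y_s>\gamma(s)\}}+\tfrac12\mathbf{1}_{\{Y_s=\gamma(s)\}}\bigr)\,dU_s$ with the It\^o expansion of $r_C(t,Y_t)$. On $\{Y_s\ne\gamma(s)\}$ the relevant derivatives of $r_C$ (resp.\ $r_C+H=r_D$) coincide with those of $r$, which are continuous there, so one recovers the $ds$-, $dY_s$- and $d\langle Y\rangle_s$-terms of \eqref{eq:peskir}, the harmless indicator $\mathbf{1}_{\{Y_s\ne\gamma(s)\}}$ in the second-order term being legitimate since $\int_0^t\mathbf{1}_{\{Y_s=\gamma(s)\}}\,d\langle Y\rangle_s=0$. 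On $\{Y_s=\gamma(s)\}$ one has $d\langle Y\rangle_s=0$ and $\mathbf{1}_{\{Y_s=\gamma(s)\}}dY_s=\mathbf{1}_{\{Y_s=\gamma(s)\}}d\gamma_s$ there, and the weight $\tfrac12\mathbf{1}_{\{Y_s=\gamma(s)\}}$ converts the one-sided ($r_C$-side) derivatives $r'_t(s,\gamma(s)-)$, $r'_y(s,\gamma(s)-)$ into the symmetric averages $\tfrac12(r'_t(s,\gamma(s)+)+r'_t(s,\gamma(s)-))$, $\tfrac12(r'_y(s,\gamma(s)+)+r'_y(s,\gamma(s)-))$ of the statement. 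The hard part is the limiting argument of the previous paragraph: keeping uniform $L^2$/uniform-integrability control of the stochastic integrals while extracting exactly the factor $\tfrac12$, i.e.\ checking that the $\psi_n''$-term and the covariation term combine with the correct signs so that only one half of a local time survives, and that the symmetric convention is propagated everywhere; by comparison the $C^{1,2}$-extension step is routine once $\gamma$ is $C^1$ and, for general bounded-variation $\gamma$, is handled by the localisation in \cite{peskir}.
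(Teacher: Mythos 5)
Your argument is sound, but it follows a genuinely different route from the one the paper takes. Note first that the paper does not actually prove Theorem \ref{thm-urpeskir}: it is imported verbatim from \cite{peskir}, and what the Appendix proves is the neighbouring variant (Theorem \ref{thm-peskir}, with $\gamma\in C^1$ and weaker smoothness of $r$). There, after the same reduction $Z=Y-\gamma$, the authors use the Kurtz trick: they build $G_1,G_2$ as \emph{symmetrizations} of $G=r(\cdot,\cdot+\gamma(\cdot))$ restricted to each half-space, write $G(t,Z_t)=G_1(t,Z_t\wedge 0)+G_2(t,Z_t\vee 0)-G(t,0)$, apply It\^o--Tanaka to $Z\wedge 0$ and $Z\vee 0$ and the classical It\^o formula to $G_1,G_2$ (which are $C^{1,1}$ globally and $C^{1,2}$ off $\{x=0\}$), and then only have to identify terms; all the local-time bookkeeping is delegated to the It\^o--Tanaka formula, the one delicate point being $\int_0^t G''_{i,xx}(s,Z_s)\1_{Z_s=0}\,d\langle Z\rangle_s=0$ via the generalized occupation formula (\cite{RY}, Ex.~VI.1.15). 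You instead decompose $r=r_C+H\,\1_{\{x\ge\gamma(t)\}}$ with $H=r_D-r_C$ vanishing on the curve, and regularize the indicator by a symmetric $\psi_n$; the factor $\tfrac12$ and the \emph{symmetric} local time then emerge from the cancellation $-\tfrac12+1=\tfrac12$ between the $\psi_n''$-term and the covariation term, using $a\psi_n''(a)\,da\to-\delta_0$ and $\psi_n'(a)\,da\to\delta_0$ against the regulated map $a\mapsto L^a(Z)$ (whose left/right limits at $0$ may genuinely differ, so the evenness of $\psi_n'$ is indeed load-bearing, as you say). Both proofs are correct; the paper's is shorter once It\^o--Tanaka is granted, while yours is more self-contained and makes explicit where the symmetric convention enters. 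Two small remarks: the extensions $r_C,r_D\in C^{1,2}(E)$ need not be constructed --- they are exactly what the hypothesis $r\in C^{1,2}(\overline{C})\cap C^{1,2}(\overline{D})$ means in this paper, so no $C^1$ regularity of $\gamma$ is ever needed (consistent with the statement, which only assumes $\gamma$ continuous of bounded variation); and the side claim $\1_{\{Y_s=\gamma(s)\}}dY_s=\1_{\{Y_s=\gamma(s)\}}d\gamma(s)$ is not justified in general (only the vanishing of the martingale part follows from $\int_0^t\1_{\{Z_s=0\}}d\langle Z\rangle_s=0$), but it is not used anywhere essential since \eqref{eq:peskir} keeps $dY_s$ intact.
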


Note that in the above Theorem, the assumption $r\in C^{1,2}(\overline{C})\cap C^{1,2}(\overline{D})$ means that $r$ restricted to~$C$ coincides with a function $r_0$ lying in the whole space $C^{1,2}(E)$, and $r$ restricted to $D$ coincides with a function~$r_1$ lying in the whole space $C^{1,2}(E)$. 

However, when dealing with PDE aspects (Sections \ref{sec:feynman}, \ref{sec:EDP} and \ref{sec:markov}), we will need to apply the Itô-Peskir formula to functions that have less smoothness: these functions will only possess continuous partial derivatives (of order one in time and at least two in the space variable) with limits all the way up to the boundary $\Delta_\gamma=\{(t,x)\in [0,T]\times\R:\,x=\gamma(t)\}$. The price to pay, in order to get the same formula 
\eqref{eq:peskir},
is then to require additional smoothness of the curve $\gamma(t)$: we require it to be of class $C^1$.

In Theorem \ref{thm-peskir} below, we give the adaptation of the Itô-Peskir formula that will be used in Sections \ref{sec:feynman} and \ref{sec:markov} (in fact the formula is the key the forthcoming Proposition \ref{prop:transfo}, that will be used repeatedly in the sequel).  Note that the assumptions on the function $r$ in Theorem \ref{thm-urpeskir} imply the ones in Theorem \ref{thm-peskir}. But of course, on the opposite, the fact that $\gamma$ is $C^1$ implies the fact that it is continuous of bounded variation.

 For the sake of completeness, we will give hints for a full proof of Theorem \ref{thm-peskir} in the Appendix along the same lines as \cite{peskir}.

\begin{theorem}
\label{thm-peskir}
Let $Y$ be a continuous $\R$-valued semimartingale. Let $\gamma:[0,T]\to\R$ be a function of class $C^1$, and 
consider $\Delta_\gamma =\{(t,x)\in [0,T]\times\R:\,x=\gamma(t)\}$.
Let $r\in C(E)\cap C^{1,2}(E^\circ\setminus \Delta_\gamma)$ such that the limits $r'_t(t, \gamma(t)\pm)$, $r'_y(t,\gamma(t)\pm)$, and
$r^{''}_{yy}(t,\gamma(t)\pm)$ exist and are continuous as functions of $t\in [0,T)$. Then, for any $0\leq t<T$, we have \eqref{eq:peskir}.
\end{theorem}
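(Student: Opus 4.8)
The plan is to reduce Theorem~\ref{thm-peskir} to Theorem~\ref{thm-urpeskir} by a smoothing argument in which the curve $\gamma$ plays an active role. The obstruction is that $r$ is only $C^{1,2}$ on each side up to $\Delta_\gamma$, so we cannot directly extend $r|_C$ and $r|_D$ to functions in $C^{1,2}(E)$ as demanded by Theorem~\ref{thm-urpeskir}. First I would straighten the curve: set $Z_t = Y_t - \gamma(t)$, which is again a continuous semimartingale (here is where $\gamma \in C^1$, hence of bounded variation, is used, so that $d Z_s = dY_s - \gamma'(s)\,ds$ and $\langle Z\rangle = \langle Y\rangle$), and define $\tilde r(t,z) = r(t, z + \gamma(t))$. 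Then $\tilde r \in C(E) \cap C^{1,2}(E^\circ \setminus \Delta_0)$ (with $\Delta_0$ as in \eqref{eq:def-Delta}), and the one-sided limits of $\tilde r'_t, \tilde r'_z, \tilde r''_{zz}$ along $\{z=0\}$ exist and are continuous in $t$ — this uses the chain rule together with the $C^1$-regularity of $\gamma$ to control the extra terms $r'_x \gamma'$ etc. So it suffices to prove the formula in the case $\gamma \equiv 0$, i.e. for a flat interface, and then translate back, checking that the change-of-variable terms reassemble correctly into \eqref{eq:peskir}.

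For the flat case, the key step is to mollify $\tilde r$ \emph{only in the space variable}, on each side of $0$ separately. Concretely, extend $\tilde r(t,\cdot)|_{[0,\infty)}$ and $\tilde r(t,\cdot)|_{(-\infty,0]}$ to $C^{1,2}$ functions $r_1, r_0$ on all of $E$ (possible by Whitney-type extension in the space variable, uniformly in $t$, since the one-sided derivatives up to order two are continuous up to the boundary), and form $r_\varepsilon(t,z) = \phi_\varepsilon(z) r_1(t,z) + (1-\phi_\varepsilon(z)) r_0(t,z)$ for a smooth cutoff $\phi_\varepsilon$ interpolating from $0$ to $1$ across a window of width $\varepsilon$ around $0$ — or, more in the spirit of \cite{peskir}, apply Theorem~\ref{thm-urpeskir} directly to $r_1$ on $\overline D$ and to $r_0$ on $\overline C$ after regularizing $\tilde r$ to agree with a single $C^{1,2}(E)$ function near each closed half-space. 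Applying the already-established It\^o–Peskir formula \eqref{eq:peskir} (with $\gamma\equiv 0$) to these smooth pieces is legitimate, and yields the formula with $r_\varepsilon$ in place of $\tilde r$.

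The final step is to pass to the limit $\varepsilon \downarrow 0$ in each term of \eqref{eq:peskir} written for $r_\varepsilon$. The terms $r_\varepsilon(t,Z_t)$, $r_\varepsilon(0,Z_0)$ and the Lebesgue integral $\int_0^t \tfrac12(r_\varepsilon{}'_t(s,Z_s+)+r_\varepsilon{}'_t(s,Z_s-))\,ds$ converge by dominated convergence, using uniform bounds on the extensions on a neighborhood of the support of the occupation measure of $Z$. The stochastic integral $\int_0^t \tfrac12(r_\varepsilon{}'_z(s,Z_s+)+r_\varepsilon{}'_z(s,Z_s-))\,dZ_s$ converges in probability by the dominated convergence theorem for stochastic integrals. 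The subtle terms are the last two: $\frac12\int_0^t r_\varepsilon{}''_{zz}(s,Z_s)\1_{Z_s\ne 0}\,d\langle Z\rangle_s$ and $\frac12\int_0^t(r_\varepsilon{}'_z(s,0+)-r_\varepsilon{}'_z(s,0-))\,dL^0_s(Z)$. Here one arranges the mollification so that $r_\varepsilon{}'_z(s,0\pm)$ equals the true one-sided limit $\tilde r'_z(s,0\pm)$ for all $\varepsilon$ (the cutoff construction gives this automatically), so the local-time term is already in final form; and one uses the occupation time formula $\int_0^t g(Z_s)\,d\langle Z\rangle_s = \int_\R g(a) L^a_t(Z)\,da$ together with $a\mapsto L^a_t(Z)$ having compact support and being bounded in $L^1(\P)$ to show that the contribution of the $\varepsilon$-window $\{|z|<\varepsilon\}$ to the second-derivative integral vanishes, while outside that window $r_\varepsilon{}''_{zz} = r''_{zz}$ exactly. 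I expect this last limit — controlling the $d\langle Z\rangle$-integral near the interface — to be the main technical obstacle, exactly as in \cite{peskir}; the $C^1$-regularity of $\gamma$ is what keeps $\langle Z\rangle = \langle Y\rangle$ and avoids any singular contribution from the curve itself. Reassembling and undoing the substitution $Z_t = Y_t - \gamma(t)$ then gives \eqref{eq:peskir} for $r$ and $\gamma$, which is the claim.
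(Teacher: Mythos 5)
Your reduction to the flat interface via $Z_t=Y_t-\gamma(t)$ and $\tilde r(t,z)=r(t,z+\gamma(t))$ is exactly the paper's first step, and your use of $\gamma\in C^1$ there is the right one. The gap is in the next step: you claim that $\tilde r(t,\cdot)|_{[0,\infty)}$ and $\tilde r(t,\cdot)|_{(-\infty,0]}$ admit Whitney-type extensions to functions $r_1,r_0\in C^{1,2}(E)$ ``since the one-sided derivatives up to order two are continuous up to the boundary.'' But the hypotheses of Theorem \ref{thm-peskir} only give, for each fixed $t$, the existence of the one-dimensional limits $r''_{yy}(t,\gamma(t)\pm)$ together with their continuity \emph{as functions of $t$}; this does not imply that $r''_{yy}$ extends continuously to $\overline{C}$ or $\overline{D}$ as a function of $(t,y)$ (think of $(t,z)\mapsto tz/(t^2+z^2)$ on $\{z>0\}$: every vertical limit at $z=0$ is $0$ and depends continuously on $t$, yet the function has no continuous extension to the closed half-plane). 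The paper states this explicitly right before the proof: the assumptions ``do in general not guarantee $r$ to be in $C^{1,2}(\overline{C})\cap C^{1,2}(\overline{D})$ in the sense of \cite{peskir}.'' Since your whole argument (both the cutoff variant and the ``apply Theorem \ref{thm-urpeskir} to $r_1$ on $\overline D$ and $r_0$ on $\overline C$'' variant) rests on the existence of these global $C^{1,2}$ extensions, the reduction to Theorem \ref{thm-urpeskir} does not go through. A secondary bookkeeping problem: in the cutoff version $r_\varepsilon=\phi_\varepsilon r_1+(1-\phi_\varepsilon)r_0$ is globally $C^{1,2}$, so its It\^o expansion has \emph{no} local-time term; the term $\phi_\varepsilon''(r_1-r_0)=O(\varepsilon^{-1})$ in the $\varepsilon$-window does not vanish under the occupation-time formula but is precisely what must regenerate $\frac12(\tilde r'_z(s,0+)-\tilde r'_z(s,0-))\,dL^0_s(Z)$ in the limit, so you cannot simultaneously keep ``the local-time term already in final form'' and discard the window contribution.

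The paper circumvents the extension problem with the Kurtz trick: it forms the \emph{reflected} functions $G_1$ (equal to $G$ for $x<0$ and to $2G(t,0)-G(t,-x)$ for $x\ge 0$) and $G_2$ (symmetrically), which are globally $C^{1,1}(E)$ and $C^{1,2}$ off $\{x=0\}$ with one-sided second-derivative limits, writes $G(t,Y_t)=G_1(t,Y_t\wedge 0)+G_2(t,Y_t\vee 0)-G(t,0)$ so that each $G_i$ is only ever evaluated on the side where it coincides with $G$, and then applies an ``almost classical'' It\^o formula to $G_1,G_2$ (justified by regularization, dominated convergence, and the generalized occupation-time formula to show $\int_0^t G''_{i,xx}(s,Y_s)\1_{Y_s=0}\,d\langle Y\rangle_s=0$), the local time entering through It\^o--Tanaka applied to $Y\wedge 0$ and $Y\vee 0$. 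If you want to repair your argument, you would need either to replace the extension step by such a reflection device, or to strengthen the hypothesis to genuine $C^{1,2}(\overline{C})\cap C^{1,2}(\overline{D})$ regularity, which is exactly the stronger assumption of Theorem \ref{thm-urpeskir} that Theorem \ref{thm-peskir} is designed to avoid.
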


\vspace{0.5cm}

For our purpose we need a more general formula, valid for multiple curves and local times. Such an extension of the result of Theorem \ref{thm-peskir} was announced in \cite{peskir} (see the Remark 2.3 therein) without proof. 

\begin{corollary}
\label{cor-peskirmulti}
Let $Y$ be a continuous $\R$-valued semimartingale. 

Let $I\in \N^*$. For each $1\leq i\leq I$, let $y_i:[0,T]\to\R$ be a continuous function of bounded variation,
and assume that $y_i(t)<y_j(t)$ for all $t\in [0,T]$ and all $1\leq i<j\leq I$. 

Let $r\in C(E)\cap\Big(\cap_{i=0}^I C^{1,2}(\overline{D^y_i})\Big)$.
Then, for any $0\leq t< T$,
$$
\begin{array}{lll}
r(t,Y_t)&=&\ds r(0,Y_0)+\int_0^t\frac 1 2(r'_t(s,Y_s+)+r'_t(s,Y_s-))ds
+\int_0^t\frac 1 2(r'_y(s,Y_s+)+r'_y(s,Y_s-) )dY_s\\
\\
&&\ds +\frac 1 2\int_0^tr''_{yy}(s,Y_s)\1_{\{Y_s\neq y_i(s),\,\forall 1\leq i\leq I\}}d\langle Y\rangle_s
+\frac 1 2 \sum_{i=1}^I\int_0^t(r'_y(s,Y_s+)-r'_y(s,Y_s-))dL^{y_i}_s(Y).\\
\end{array}
$$
The result remains valid if the curves $y_i$'s are of class $C^1$ and if
 $r\in C(E)\cap C^{1,2}(E^\circ\setminus \Delta_{\bf y})$ is such that for all $1\leq i\leq I$, the limits $r'_t(t, y_i(t)\pm)$, $r'_y(t,y_i(t)\pm)$, and
$r^{''}_{yy}(t,y_i(t)\pm)$ exist and are continuous as functions of $t\in [0,T)$. 

\end{corollary}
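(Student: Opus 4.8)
The plan is to reduce the multi-curve identity to finitely many applications of the one-curve formula — Theorem~\ref{thm-urpeskir} in the bounded-variation setting, Theorem~\ref{thm-peskir} in the $C^1$ setting — by decomposing $r$ as a globally regular function plus finitely many summands, each singular across a single curve. The decomposition will exploit that the curves stay apart: since the $y_i$ are continuous on the compact interval $[0,T]$ with $y_i(t)<y_{i+1}(t)$ for all $t$, the separation constant $\delta:=\min_{1\le i\le I-1}\inf_{t\in[0,T]}(y_{i+1}(t)-y_i(t))$ is strictly positive.

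For the bounded-variation case I would argue as follows. For $0\le j\le I$, pick (using the hypothesis $r\in C^{1,2}(\overline{D^y_j})$) a function $\rho_j\in C^{1,2}(E)$ coinciding with $r$ on $\overline{D^y_j}$; for $j\ge1$ the functions $\rho_j$ and $\rho_{j-1}$ then agree with $r$, hence with each other, on the graph of $y_j$. A direct computation gives the telescoping identity
$$
r \;=\; \rho_0 \;+\; \sum_{i=1}^I h_i, \qquad h_i:=(\rho_i-\rho_{i-1})\,\1_{\{x\ge y_i(t)\}} .
$$
Each $h_i$ lies in $C(E)$ (its two one-sided limits along the graph of $y_i$ both vanish), equals the zero function on $\{x\le y_i(t)\}$ and the $C^{1,2}(E)$ function $\rho_i-\rho_{i-1}$ on $\{x\ge y_i(t)\}$, and vanishes identically near $y_j$ for every $j\neq i$; hence Theorem~\ref{thm-urpeskir} applies to $h_i$ with the curve $y_i$. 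Applying it also to $\rho_0$ (with, say, the curve $y_1$; its local-time term drops out since $\rho_0$ has no kink) and summing the $I+1$ resulting identities should produce the claimed formula, once the terms are seen to recombine.

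For the case of $C^1$ curves the hypothesis no longer yields global smooth extensions, so I would localize instead. Using that the $y_i$ are $C^1$ and pairwise $\delta$-separated, choose a partition of unity $(\chi_i)_{i=0}^I$ with $\chi_i\in C^{1,\infty}(E)$ and $\sum_{i=0}^I\chi_i\equiv1$, where for $1\le i\le I$ the function $\chi_i$ is supported in $\{|x-y_i(t)|<\delta/2\}$ and is identically $1$ on $\{|x-y_i(t)|<\delta/3\}$ (for instance $\chi_i(t,x)=\psi((x-y_i(t))/\delta)$ for a fixed bump function $\psi$, which is $C^1$ in $t$ because $y_i\in C^1$), and $\chi_0:=1-\sum_{i\ge1}\chi_i$ vanishes near every curve. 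Then $\chi_ir$ ($1\le i\le I$) is continuous, is $C^{1,2}$ off the graph of $y_i$, and — because $\chi_i\equiv1$ near $y_i$ — has at $y_i$ exactly the same one-sided limits of $r'_t,r'_y,r''_{yy}$ and the same jump of $r'_y$ as $r$; so Theorem~\ref{thm-peskir} applies to $\chi_ir$ with the curve $y_i$, while $\chi_0r\in C^{1,2}(E^\circ)$ is handled by Theorem~\ref{thm-peskir} with an arbitrary curve. Summing the $I+1$ identities and using $\sum_i\chi_i\equiv1$ should again give the result.

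In both cases the recombination step is the same: for each fixed $j$, all summands but one are smooth across $y_j$, and the remaining one carries at $y_j$ precisely the one-sided data of $r$, so that the $ds$-, $dY_s$- and $dL^{y_j}_s(Y)$-integrals add up term by term to those in the statement; for the $d\langle Y\rangle_s$-integral one additionally invokes the occupation-times identity $\int_0^t\1_{\{Y_s=y_i(s)\}}\,d\langle Y\rangle_s=\int_0^t\1_{\{(Y-y_i)_s=0\}}\,d\langle Y-y_i\rangle_s=\int_\R\1_{\{0\}}(a)\,L^a_t(Y-y_i)\,da=0$ (legitimate since $y_i$ has bounded variation), which shows that the value of $r''_{yy}$ on the curves is immaterial. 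I expect the main obstacle to be entirely ``deterministic'': arranging the decomposition of $r$ so that every piece meets \emph{exactly} the hypotheses of the relevant one-curve theorem — that $\rho_0$ is genuinely in $C^{1,2}(E)$ in the first case, and that multiplying by the cutoffs $\chi_i$ does not destroy the continuity of the one-sided partial derivatives up to the boundary in the second — and then checking carefully that the finitely many one-curve formulas recombine into the multi-curve one with no leftover cross-terms; no fresh stochastic analysis beyond Theorems~\ref{thm-urpeskir} and~\ref{thm-peskir} should be needed.
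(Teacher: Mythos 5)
Your proposal is correct and follows essentially the same route as the paper: the paper likewise writes $r$ as a finite sum of functions, each singular across only one curve $y_i$ (there the pieces are defined explicitly on bands of width $\varepsilon_y/4$ around each curve, with an arbitrary smooth interpolation in between, rather than via your telescoping extensions or partition of unity), and then sums the one-curve formula of Theorems \ref{thm-urpeskir}/\ref{thm-peskir}. One small slip: for $j>i$ your $h_i=(\rho_i-\rho_{i-1})\1_{\{x\geq y_i(t)\}}$ does not vanish near $y_j$, but it is $C^{1,2}$ across $y_j$, which is all the recombination actually requires.
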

\begin{proof}
The proof is postponed to the Appendix.
\end{proof}

\subsection{Time inhomogeneous Markov processes, infinitesimal generator of the associated space-time process}
\label{ss:markov-inho}

The presentation of Markov processes, especially when coming to the time inhomogeneous case, varies slightly from one book to the other. Here we precise some definitions and concepts. We follow mainly \cite{RY} but we are also inspired by other references (\cite{kara}, \cite{wentzell}; see also \cite{bottcher}).

\vspace{0.2cm}

 Let $(\Omega,\cF,\P)$ be a probability space, $(\cF_t)_{t\in[0,T]}$ a filtration ($\cF_t\subset\cF$ for any $t\in [0,T]$) and 
consider $Z=(Z_t)_{t\in[0,T]}$ an adapted process defined on this probability space, taking values in a measurable space $(U,\cU)$.

We will say that $Z$ is an $(\cF_t)$-{\it Markov process} if for any $0\leq s\leq t\leq T$, and any $f\in C_b(U)$ we have
$$
\E[f(Z_t)\,|\,\cF_s]=\E[f(Z_t)\,|\,Z_s].$$
Denoting $\E^{s,x}(\cdot)=\E(\cdot\,|Z_s=x)$ and defining the operator $P_{s,t}$ by $P_{s,t}f(x)=\E^{s,x}[f(Z_t)]$,  for any $f\in C_b(U)$, any $x\in U$, we clearly have 
$
\E[f(Z_t)\,|\,\cF_s]=P_{s,t}f(Z_s)$. The family $(P_{s,t})_{0\leq s\leq t\leq T}$ is called the transition function of $Z$. We will say that $Z$ is a {\it time homogeneous Markov process} if $P_{s,t}=P_{0,t-s}$. In the opposite case it is called {\it time inhomogeneous}.

Now to fix ideas suppose the Markov process $Z$ is $\R$-valued, and denote $(P_{s,t})$ its transition function. Consider the associated $E$-valued {\it space-time process} $\tilde{Z}=((t,Z_t))_{t\in[0,T]}$. It is an exercise 
(\cite{RY}, Exercise III.1.10)
to check that for any $\varphi\in C_b(E)$ and any $0\leq s\leq t\leq T$,
\begin{equation*}
\E[\varphi(\tilde{Z}_t)\,|\,\cF_s]=P_{t-s}\varphi(\tilde{Z}_s)
\end{equation*}
with 
\begin{equation}
\label{eq:defPt}
\forall (s,x)\in E, \;\;\forall \varphi\in C_b(E), \;\; \forall\,0\leq t\leq T-s, \quad P_t\varphi(s,x)=P_{s,t+s}\varphi(t+s,x)=\E^{s,x}[\varphi(s+t,Z_{s+t})]
\end{equation}
(the value of $P_t\varphi(s,x)$ for $t+s>T$ is arbitrarily set to zero; see the forthcoming Remark \ref{rem:domL}). Thus the space-time process $\tilde{Z}$ is always a time homogeneous Markov process ($Z$ being time homogeneous or not), with transition function given by \eqref{eq:defPt}.

Note that the family $(P_{s,t})$ satisfies $P_{t,t}=\mathrm{Id}$ and thanks to the Markov property of $Z$  the evolution property
\begin{equation}
\label{eq:evo}
P_{s,u}\circ P_{u,t}=P_{s,t},\quad\forall 0\leq s\leq u\leq t\leq T.
\end{equation}

The family $(P_t)$ satisfies $P_0=\mathrm{Id}$ and thanks to the time homogeneous Markov property of $\tilde{Z}$  the semigroup property
\begin{equation}
\label{eq:semigroup}
P_s\circ P_t=P_{t+s},\quad \forall 0\leq s\leq T,\;\;\forall\,0\leq t\leq T-s.
\end{equation}

If the family $(P_{s,t})$ satisfies, in addition to \eqref{eq:evo}, that for any $f\in C_0(\R)$ we have $P_{s,t}f\in C_0(\R)$,
 $||P_{s,t}f||_\infty\leq ||f||_\infty$, $P_{s,t}f\geq 0$ if $f\geq 0$, and
 \begin{equation}
 \label{eq:contevo}
 \lim_{\substack{
 (s,t)\to(v,w)\\
 s\leq t\\}}||P_{s,t}f-P_{v,w}f||_\infty=0
 \end{equation}
it is called a {\it Feller evolution system}.

If the family $(P_t)$ satisfies, in addition to \eqref{eq:semigroup}, that for any $\varphi\in C_0(E)$, we have
$P_t\varphi\in C_0(E)$, 
$||P_t\varphi||_\infty\leq ||\varphi||_\infty$, $P_t\varphi\geq 0$ if $\varphi\geq 0$,
and  $\lim_{t\downarrow 0}||P_t\varphi-\varphi||_\infty=0$, then it is called a {\it Feller semigroup}.

\vspace{0.2cm}

We have the following result.

\begin{theorem}[\cite{bottcher}]
\label{thm:bottcher}
Let $Z$ be a Markov process with corresponding transitions $(P_{s,t})$. Let $(P_t)$ be the semigroup associated to the space-time process $\tilde{Z}$ of $Z$.  
Then the following statements are equivalent:

i) $(P_{s,t})$ is a Feller evolution system.

ii) $(P_t)$ is a Feller semigroup.
\end{theorem}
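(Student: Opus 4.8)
The plan is to prove the equivalence by exploiting the explicit relationship between the two families $(P_{s,t})$ and $(P_t)$ given in \eqref{eq:defPt}, namely $P_t\varphi(s,x)=P_{s,s+t}\varphi(s+t,x)$ for $s+t\le T$ (and $=0$ otherwise). The key observation is that the defining properties of each object (positivity, sub-Markovianity, preservation of the $C_0$-space, and the respective continuity properties) translate into one another under this formula, so the proof amounts to checking each property in turn in both directions. Positivity and the contraction estimate $\|P_t\varphi\|_\infty\le\|\varphi\|_\infty$ transfer immediately and essentially trivially, since $P_t\varphi(s,x)$ is just a value of some $P_{s,s+t}$ applied to the section $\varphi(s+t,\cdot)$; the two genuinely substantive points are (a) the mapping property into $C_0$ and (b) the continuity/strong-continuity conditions \eqref{eq:contevo} versus $\lim_{t\downarrow 0}\|P_t\varphi-\varphi\|_\infty=0$.

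For the direction (i) $\Rightarrow$ (ii), I would first fix $\varphi\in C_0(E)$ and show $P_t\varphi\in C_0(E)$: vanishing at infinity in $x$ follows from the fact that, uniformly in $s$, $P_{s,t}$ maps $C_0(\R)$ to $C_0(\R)$ together with the joint continuity \eqref{eq:contevo} which controls the $s$-dependence on the compact time interval; continuity in $(s,x)$ jointly uses \eqref{eq:contevo} again (for the $s$-variable and the shift $s+t$) plus the continuity of $\varphi$. Then for strong continuity at $t=0$ I would write $P_t\varphi(s,x)-\varphi(s,x)=\big(P_{s,s+t}\varphi(s+t,\cdot)\big)(x)-\varphi(s,x)$ and split it as $P_{s,s+t}\big(\varphi(s+t,\cdot)-\varphi(s,\cdot)\big)(x)+\big(P_{s,s+t}\varphi(s,\cdot)(x)-P_{s,s}\varphi(s,\cdot)(x)\big)$; the first term is bounded by $\sup_{s}\|\varphi(s+t,\cdot)-\varphi(s,\cdot)\|_\infty$, which goes to $0$ by uniform continuity of $\varphi$ on the compact set (after handling the behaviour near infinity using $\varphi\in C_0$), and the second goes to $0$ uniformly in $s$ by \eqref{eq:contevo} with $(v,w)=(s,s)$. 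One has to be slightly careful with the convention $P_t\varphi=0$ for $s+t>T$, but since $\varphi\in C_0(E)$ need not vanish at $t=T$ this contributes a boundary effect near $s=T$; here one should note that $C_0(E)$ as used later is effectively the relevant space and the strong continuity is only claimed as $t\downarrow 0$, so for each fixed small $t$ the "bad" set $\{s:s+t>T\}$ shrinks — but this is exactly the delicate point (see below).

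For the converse (ii) $\Rightarrow$ (i), one recovers $(P_{s,t})$ from $(P_t)$ by the same formula read backwards, or more directly one argues that a Feller semigroup on the space-time process forces each $P_{s,t}$ to inherit positivity, the contraction property, and the $C_0(\R)$-mapping property by restricting to sections, and that the joint continuity \eqref{eq:contevo} follows from the norm-continuity of the semigroup $(P_t)$ together with the evolution/semigroup relations \eqref{eq:evo}, \eqref{eq:semigroup} — one decomposes $P_{s,t}-P_{v,w}$ using $P_{s,t}=P_{s,v}P_{v,w}$ type identities (for the appropriate ordering of $s,v,w,t$) and controls each factor by strong continuity. The main obstacle, and the place deserving the most care, is reconciling the boundary behaviour at the terminal time $T$ and the artificial convention $P_t\varphi\equiv 0$ for $t+s>T$ with the requirement $P_t\varphi\in C_0(E)$ and the strong continuity: one must check that this truncation does not destroy continuity in $(s,x)$ near the slice $\{s=T\}$, which works precisely because the relevant function space makes $\varphi$ and its images small near that slice, or because one restricts attention to $\varphi$ supported away from $t=T$ and then uses density. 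I would expect the author's proof to either cite \cite{bottcher} for the bulk of the abstract equivalence and only spell out the space-time bookkeeping, or to push the boundary subtlety into the definition of $C_0(E)$; in a self-contained write-up this terminal-time bookkeeping is the one step I would treat most carefully rather than routinely.
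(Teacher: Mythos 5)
Your proposal is correct in outline, but it is not the route the paper takes: the paper's entire proof consists of the observation that the families $(P_{s,t})$ and $(P_t)$ defined via \eqref{eq:defPt} coincide with the ones in \cite{bottcher} (despite the slightly different, \cite{RY}-style definition of the space-time process) and an appeal to the proof given there, adapted from the infinite to the finite time horizon. What you have written is essentially a reconstruction of the content of that cited proof: the translation dictionary $P_t\varphi(s,x)=P_{s,s+t}\varphi(s+t,x)$, the trivial transfer of positivity and the contraction bound, the splitting $P_t\varphi(s,x)-\varphi(s,x)=P_{s,s+t}\bigl(\varphi(s+t,\cdot)-\varphi(s,\cdot)\bigr)(x)+\bigl(P_{s,s+t}-P_{s,s}\bigr)\varphi(s,\cdot)(x)$ together with \eqref{eq:contevo} for strong continuity, and the factorization of $P_{s,t}-P_{v,w}$ via \eqref{eq:evo} for the converse. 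Your route buys a self-contained argument where the paper offers only a pointer; the paper's approach, in turn, avoids committing itself on the details at the terminal time.

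On that last point: you correctly single out the convention $P_t\varphi(s,x)=0$ for $s+t>T$ as the delicate step, but neither of your suggested fixes actually closes it. For $\varphi\in C_0(E)$ with $\varphi(T,\cdot)\neq 0$, the function $P_t\varphi$ jumps from $P_{s,T}\varphi(T,\cdot)$ to $0$ across the slice $s=T-t$, so $P_t\varphi\notin C_0(E)$, and $\|P_t\varphi-\varphi\|_\infty\geq\|\varphi(T,\cdot)\|_\infty$ does not tend to $0$ as $t\downarrow 0$; a density argument cannot repair this, because $\{\varphi\in C_0(E):\varphi(T,\cdot)=0\}$ is a \emph{proper closed} subspace of $C_0(E)$, so approximating by functions supported away from $t=T$ never reaches a general $\varphi$. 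The two honest resolutions are either to work on that closed subspace (which is what the paper implicitly does later, cf.\ Remark \ref{rem:domL}, where $\varphi(T,\cdot)=0$ is built into $\cS^X$) or to replace the zero convention by $P_t\varphi(s,x)=P_{s,T}\varphi(T,x)$ for $s+t>T$, under which all of your estimates go through verbatim. This is a defect of the setup shared with the paper rather than of your argument, but in a self-contained write-up you must choose one of these options explicitly rather than leaving it as a flagged difficulty.
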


\begin{proof}
Note that our definition of the space-time process, which follows \cite{RY}, is a bit different from the one in \cite{bottcher},\cite{wentzell}, which is more canonical. But in fact, the families of operators $(P_{s,t})$ and $(P_t)$ that we have defined above, are exactly the same as the ones in \cite{bottcher},\cite{wentzell}. Therefore is suffices to adapt the proof of \cite{bottcher}, which is carried out on a infinite time interval, to the finite time interval case.
\end{proof}

We will say that $Z$ is a {\it Feller time inhomogeneous Markov process} if its corresponding evolution system $(P_{s,t})$ is Feller, or equivalently if the semigroup $(P_t)$ of the corresponding space-time process $\tilde{Z}$ is Feller (note that $\tilde{Z}$ is therefore a Feller process in the sense of \cite{RY}). We will focus on this latter point of view, because we believe it provides a more representative setting in order to describe the operators associated to a Feller time inhomogeneous Markov process $Z$. More precisely we will work at identifying the parabolic operator that is the infinitesimal generator of the space-time process $\tilde{Z}$.

\vspace{0.2cm}

At this point we recall the following definition.

\begin{definition}
Let $\tilde{Z}$ be a $E$-valued Feller process, with associated Feller semigroup $(P_t)$. A function $\varphi$ in $C_0=C_0(E)$ is said the belong to the domain $D(\cL)$ of the infinitesimal generator of $\tilde{Z}$ if the limit
\begin{equation}
\label{eq:defi-gene}
\cL \varphi=\lim_{t\downarrow 0}\frac 1 t (P_t\varphi-\varphi)
\end{equation}
exists in $C_0$. The operator $\cL:D(\cL)\to C_0$ thus defined is called the infinitesimal generator of the process $\tilde{Z}$ or of the semigroup $(P_t)$.

\end{definition}

In order to identify such infinitesimal generators we will use the following proposition.

\begin{proposition}
\label{prop-genemart}
Let $Z=(Z_t)_{t\in[0,T]}$ be a $\R$-valued Feller time inhomogeneous $(\cF_t)$-Markov process and let
 $\tilde{Z}=((t,Z_t))_{t\in[0,T]}$ be the $E$-valued corresponding space-time process. Assume $\tilde{Z}$ has generator $(\cL,D(\cL))$.

 If $\varphi\in C_0$, and if there exists a function $g\in C_0$ such that $M^{\varphi,g}=(M^{\varphi,g}_t)_{t\in[s,T]}$ defined by
$$
\forall t\in[s,T],\quad M^{\varphi,g}_t=\varphi(\tilde{Z}_t)-\varphi(\tilde{Z}_s)-\int_s^t g(\tilde{Z}_u)du$$
is a $(\cF_t)$-martingale under $\P^{s,x}$ (for any $(s,x)\in E$), then $\varphi\in D(\cL)$ and $\cL \varphi=g$.

\end{proposition}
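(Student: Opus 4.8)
The plan is to exploit the time-homogeneous Markov / Feller structure of the space-time process $\tilde Z$ together with the martingale hypothesis to directly evaluate the limit in \eqref{eq:defi-gene}. First I would fix $(s,x)\in E$ and apply the defining martingale property of $M^{\varphi,g}$ between times $s$ and $s+t$ (for $t$ small enough that $s+t\leq T$): taking $\P^{s,x}$-expectations and using that a martingale has constant expectation equal to its value at the initial time (which is $0$ here, since $M^{\varphi,g}_s=0$), one gets
\begin{equation*}
\E^{s,x}\bigl[\varphi(\tilde Z_{s+t})\bigr]-\varphi(s,x)=\E^{s,x}\Bigl[\int_s^{s+t}g(\tilde Z_u)\,du\Bigr].
\end{equation*}
By the definition \eqref{eq:defPt} of the semigroup $(P_t)$ of $\tilde Z$, the left-hand side is exactly $P_t\varphi(s,x)-\varphi(s,x)$, while on the right-hand side I would use Fubini's theorem (justified since $g\in C_0$ is bounded and the interval is finite) and the Markov property of $\tilde Z$ at time $u$, namely $\E^{s,x}[g(\tilde Z_u)]=P_{u-s}g(s,x)$, to rewrite it as $\int_0^t P_r g(s,x)\,dr$. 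Hence, for every $(s,x)\in E$ and every admissible $t$,
\begin{equation*}
P_t\varphi(s,x)-\varphi(s,x)=\int_0^t P_r g(s,x)\,dr.
\end{equation*}

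Next I would divide by $t$ and pass to the limit $t\downarrow 0$. The point is that this identity holds as an identity of functions in $C_0$, so I need convergence of $\frac1t\int_0^t P_r g\,dr$ to $g$ in the supremum norm of $C_0(E)$. This is the standard fact that $r\mapsto P_r g$ is a continuous $C_0$-valued map on $[0,\cdot]$: since $(P_t)$ is a Feller semigroup, $\|P_r g-g\|_\infty\to 0$ as $r\downarrow 0$, and $\sup_{0\le r\le t}\|P_r g - g\|_\infty\to 0$ as $t\downarrow 0$; therefore
\begin{equation*}
\Bigl\|\frac1t\int_0^t P_r g\,dr-g\Bigr\|_\infty\le\frac1t\int_0^t\|P_r g-g\|_\infty\,dr\le\sup_{0\le r\le t}\|P_r g-g\|_\infty\xrightarrow[t\downarrow 0]{}0.
\end{equation*}
Consequently $\frac1t(P_t\varphi-\varphi)\to g$ in $C_0$, which by the definition of $D(\cL)$ means precisely that $\varphi\in D(\cL)$ and $\cL\varphi=g$.

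I expect the only genuine subtlety to be the handling of the time boundary: $P_t\varphi(s,x)$ was defined to be $0$ for $s+t>T$, so the identity above and the limit must be read for $(s,x)$ and $t$ with $s+t\le T$; for $s=T$ one invokes the convention on the domain (Remark \ref{rem:domL}), and the $C_0(E)$-convergence is unaffected because it concerns the behaviour as $t\downarrow 0$ for each fixed interior point, with uniformity coming from the Feller continuity of $(P_t)$. The other routine points — the use of the optional-sampling/constant-expectation property of the $(\cF_t)$-martingale $M^{\varphi,g}$ under $\P^{s,x}$, the Fubini interchange, and the identification $\E^{s,x}[g(\tilde Z_u)]=P_{u-s}g(s,x)$ via the Markov property — are immediate from the hypotheses and from the material recalled in Subsection \ref{ss:markov-inho}.
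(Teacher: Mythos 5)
Your proposal is correct and follows essentially the same route as the paper's proof: take expectations of the martingale identity to get $P_t\varphi(s,x)-\varphi(s,x)=\int_0^tP_ug(s,x)\,du$, then divide by $t$ and use the strong continuity of the Feller semigroup $(P_t)$ to obtain convergence to $g$ in the supremum norm. The only (welcome) additions are the explicit justification of the Fubini/Markov step and the remark on the time boundary $s+t>T$, which the paper handles via the convention of Remark \ref{rem:domL}.
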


\begin{proof}Here we adapt the proof of Proposition 
 VII.1.7 in \cite{RY} to the inhomogeneous case. 
Recall that the semigroup $(P_t)$ associated to $\tilde{Z}$ is defined by
$$
\forall (s,x)\in E, \;\;\forall \varphi\in C_0(E), \;\; \forall\,0\leq t\leq T-s, \quad P_t\varphi(s,x)=\E^{s,x}[\varphi(s+t,Z_{s+t})].
$$
 Let $(s,x)\in E$. Thanks to the hypothesis the process $M=(M_t)_{t\in [0,T-s]}$ defined by
$$
\forall t\in [0,T-s],\quad M_t=\varphi(s+t,Z_{s+t})-\varphi(s,Z_s)-\int_0^tg(s+u,Z_{s+u})du$$
is an $(\cF_t)$-martingale under $\P^{s,x}$. Taking the expectation under $\P^{s,x}$ we get
$$
P_t\varphi(s,x)-\varphi(s,x)-\int_0^tP_ug(s,x)du=0.$$
Thus we get
$$\big|\big| \frac 1 t(P_t\varphi-\varphi)-g \big|\big|_\infty=\big|\big| \frac 1 t \int_0^t(P_ug-g)du \big|\big|_\infty\leq \frac 1 t\int_0^t||P_ug-g||_\infty du$$
which goes to zero as $t$ goes to zero.
\end{proof}

\begin{remark}
In the sequel, for any $\R$-valued Markov process $Z$ the family $(P_t)$ will denote the semigroup associated with its space-time process $\tilde{Z}$. This will be clear from the context, and there will be no risk to confuse this semigroup with the one associated to $Z$, should this process be time homogeneous Markov (as $P_t$ will act on functions from $E$ to $\R$).
\end{remark}

\begin{remark}
For a time inhomogeneous diffusion we can expect that $\cL\varphi(t,\cdot)=(\partial_t+L_t)\varphi(t,\cdot)$, with $L_t$ a second order elliptic operator in the space variable. But in our case, with discontinuous coefficients and singular terms, $D(\cL)$ will not contain $C^{1,2}$(E) functions (cf Section \ref{sec:markov}).
\end{remark}

%

\section{Getting solutions by the mean of a space transform }
\label{sec:SDE}

\subsection{Main results}

In the sequel $W=(W_t)_{t\in[0,T]}$ will always denote some $(\cF_t)$-Brownian motion defined on some filtered probability space
$(\Omega,\cF,(\cF_t)_{t\in[0,T]},\P)$.

\vspace{0.1cm}

Our main results will be the followings: the first one (Proposition \ref{prop:transfo}) is a change of variable formula for time inhomogeneous SDEs with local time (it is thus more general than the formula stated in Theorem 3.1 of \cite{peskir}, but our assumptions are more restrictive). Assuming a solution $Y$ exists to the time inhomogeneous SDE with local time \eqref{eq:EDS-Ytransfo} below, Proposition~\ref{prop:transfo} gives the form of some transformed process
$\phi(t,Y_t)$. This formula will be used extensively in the sequel. To start with, it allows to prove Theorem \ref{thm-EDSTL}, that gives existence and uniqueness results for the solution $X=(X_t)_{t\in[0,T]}$ to  equation \eqref{eq:X} under some conditions on the coefficients $\sigma(t,x)$, $b(t,x)$, $\beta_i(t)$, $1\leq i\leq I$, and the curves $x_i(t)$. 
But Proposition \ref{prop:transfo} will be again used in Sections \ref{sec:feynman} and \ref{sec:markov}.

\begin{proposition}
\label{prop:transfo}
Let $I\in \N^*$. For each $1\leq i\leq I$, let $y_i:[0,T]\to\R$ be a function of class $C^1$,
and assume that $y_i(t)<y_j(t)$ for all $t\in [0,T]$ and all $1\leq i<j\leq I$.

Let $Y=(Y_t)_{0\leq t\leq T}$ be a continuous $\R$-valued semimartingale satisfying
\begin{equation}
\label{eq:EDS-Ytransfo}
dY_t=\bar{\sigma}(t,Y_t)dW_t+\bar{b}(t,Y_t)dt+\sum_{i=1}^I\bar{\beta}_i(t)dL^{y_i}_t(Y)
\end{equation}
where $\bar{\sigma},\bar{b}:[0,T]\times\R\to\R$  are some bounded functions, and the functions 
$\bar{\beta}_i:[0,T]\to(-1,1)$, $1\leq i\leq I$, are of class $C^1$.

Let
$\phi\in C(E)\cap C^{1,2}(E^\circ\setminus \Delta_{\bf y})$ such that for all $1\leq i\leq I$, the limits $\phi'_t(t, y_i(t)\pm)$, $\phi'_y(t,y_i(t)\pm)$, and
$\phi^{''}_{yy}(t,y_i(t)\pm)$ exist and are continuous as functions of $t\in [0,T)$.

Set $X_t=\phi(t,Y_t)$ for any $t\in[0,T]$. Then
\begin{equation}
\label{eq:Xprems}
\begin{array}{lll}
dX_t&=&(\bar{\sigma}\phi'_{y,\pm})(t,Y_t)dW_t+[\phi'_{t,\pm}+\bar{b}\phi'_{y,\pm}](t,Y_t)dt+\frac 1 2 (\bar{\sigma}^2\phi''_{yy})(t,Y_t)\1_{\{Y_t\neq y_i(t),\,\forall 1\leq i\leq I\}}dt\\
\\
&&+\sum_{i=1}^I[\vartriangle\phi'_y(t,y_i(t))+\bar{\beta}_i(t)\phi'_{y,\pm}(t,y_i(t))]\,dL^{y_i}_t(Y).
\end{array}
\end{equation}
Assume further that $\phi\in C(E)\cap\Big(\cap_{i=0}^I C^{1,2}(\overline{D^y_i})\Big)$ and
\begin{equation}
\label{eq:phicrois}
\phi'_y(t,y)>0 \quad\forall (t,y)\in E\setminus\Delta_{{\bf y}}
\end{equation}
and denote, for any $t\in[0,T]$, $\Phi(t,\cdot)=[\phi(t,\cdot)]^{-1}$  and 
\begin{equation}
\label{eq:xi-phi}
x_i(t)=\phi(t,y_i(t))
\end{equation}
 for all $1\leq i\leq I$. Then
\begin{equation}
\label{eq:Xphi}
dX_t=\sigma(t,X_t)dW_t+b(t,X_t)dt+\sum_{i=1}^I\beta_i(t)dL^{x_i}_t(X)
\end{equation}
with
$$
\begin{array}{ccl}
\sigma(t,x)&=&(\bar{\sigma}\phi'_{y,\pm})(t,\Phi(t,x))\\
\\
b(t,x)&=&[\phi'_{t,\pm}+\bar{b}\phi'_{y,\pm}](t,\Phi(t,x))+\frac 1 2 (\bar{\sigma}^2\phi''_{yy})(t,\Phi(t,x))\1_{\{x\neq x_i(t),\,\forall 1\leq i\leq I\}}\\
\end{array}
$$
and
\begin{equation}
\label{eq:newbetai}
\beta_i(t)=\frac{\vartriangle\phi'_y(t,y_i(t))+\bar{\beta}_i(t)\phi'_{y,\pm}(t,y_i(t))}{\phi'_{y,\pm}(t,y_i(t))+\bar{\beta}_i(t)\vartriangle\phi'_y(t,y_i(t))}
\end{equation}
for all $t\in[0,T]$ and all $x\in\R$.
\end{proposition}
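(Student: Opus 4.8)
The plan is to read the first identity \eqref{eq:Xprems} off the It\^o--Peskir formula and then, under the extra hypotheses, to convert the local time terms $dL^{y_i}_t(Y)$ along the curves $y_i$ into local time terms $dL^{x_i}_t(X)$ along the image curves $x_i$ by a change of variable for local times. For Step 1, the regularity assumed on $\phi$ is exactly the one required to apply the $C^1$-curves version of Corollary \ref{cor-peskirmulti} to the semimartingale $Y$ and to $r=\phi$. Doing so, then substituting $d\langle Y\rangle_s=\bar\sigma^2(s,Y_s)\,ds$ (which holds since $Y$ has martingale part $\int_0^\cdot\bar\sigma(s,Y_s)\,dW_s$) and the dynamics \eqref{eq:EDS-Ytransfo} of $Y$ into the resulting expression, one collects the $dW_s$, the $ds$ and the $dL^{y_i}_s(Y)$ contributions. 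Since $dL^{y_i}_s(Y)$ is carried by $\{s:Y_s=y_i(s)\}$, in the integrals against this measure $Y_s$ may be replaced by $y_i(s)$, turning $\tfrac12(\phi'_y(s,Y_s+)+\phi'_y(s,Y_s-))$ into $\phi'_{y,\pm}(s,y_i(s))$ and $\tfrac12(\phi'_y(s,Y_s+)-\phi'_y(s,Y_s-))$ into $\vartriangle\phi'_y(s,y_i(s))$; these integrands are well-defined adapted processes thanks to the assumed continuity in $t$ of the one-sided limits of the derivatives of $\phi$ along $\Delta_{\bf y}$. This gives \eqref{eq:Xprems}; the fact that the curves do not meet ensures that near $y_i$ no other local time intervenes and that $\1_{\{Y_s\neq y_i(s),\,\forall i\}}$ is unambiguous.

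For Step 2, assume now $\phi\in C(E)\cap\big(\cap_{i=0}^I C^{1,2}(\overline{D^y_i})\big)$ together with \eqref{eq:phicrois}. For each $t$, $\phi(t,\cdot)$ is continuous on $\R$ and has positive derivative on each interval $(y_i(t),y_{i+1}(t))$, hence is strictly increasing on $\R$, so it is injective and $\Phi(t,\cdot)=[\phi(t,\cdot)]^{-1}$ is well defined on its range; moreover $x_i(t)<x_j(t)$ for $i<j$. Since the trace of $\phi$ on each $\overline{D^y_i}$ extends to a $C^{1,2}(E)$ function and the $y_i$ are $C^1$, the curves $x_i(t)=\phi(t,y_i(t))$ are of class $C^1$ (the two one-sided time-derivatives at $(t,y_i(t))$ necessarily coincide, $\phi$ being continuous across $\Delta_{\bf y}$), so the local times $L^{x_i}(X)$ are defined. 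Substituting $Y_t=\Phi(t,X_t)$ in all coefficients of \eqref{eq:Xprems}, and using that $\phi(t,\cdot)$ maps $y_i(t)$ onto $x_i(t)$ (hence $\1_{\{Y_t\neq y_i(t),\,\forall i\}}=\1_{\{X_t\neq x_i(t),\,\forall i\}}$), rewrites the $dW_t$ and $dt$ parts of $dX_t$ exactly as $\sigma(t,X_t)\,dW_t$ and $b(t,X_t)\,dt$.

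For Step 3 it remains to express $dL^{x_i}_t(X)$ in terms of $dL^{y_i}_t(Y)$. Near the curve, $X_s-x_i(s)=\phi(s,Y_s)-\phi(s,y_i(s))$ is, to first order, $\phi'_y(s,y_i(s)+)(Y_s-y_i(s))$ when $Y_s>y_i(s)$ and $\phi'_y(s,y_i(s)-)(Y_s-y_i(s))$ when $Y_s<y_i(s)$, while away from the curves $d\langle X\rangle_s=(\phi'_y)^2(s,Y_s)\bar\sigma^2(s,Y_s)\,ds$ by Step 1. Inserting this into the occupation-time definition $L^{x_i}_t(X)=\P-\lim_{\varepsilon\downarrow0}\frac1{2\varepsilon}\int_0^t\1_{|X_s-x_i(s)|<\varepsilon}\,d\langle X\rangle_s$ and splitting according to the sign of $Y_s-y_i(s)$, one obtains
$$
dL^{x_i}_t(X)=\tfrac12\,\phi'_y(t,y_i(t)+)\,dL^{y_i,+}_t(Y)+\tfrac12\,\phi'_y(t,y_i(t)-)\,dL^{y_i,-}_t(Y),
$$
where $L^{y_i,\pm}(Y)=L^{0\pm}(Y-y_i)$ are the right/left local times. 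On the other hand, from \eqref{eq:EDS-Ytransfo} the process $Z=Y-y_i$ satisfies, locally around $0$ (the other curves staying at positive distance from $y_i$), $dZ_t=dM_t+dV_t+\bar\beta_i(t)\,dL^0_t(Z)$ with $M$ a local martingale, $V$ absolutely continuous, $L^0(Z)=L^{y_i}(Y)$ the symmetric local time, and $\bar\beta_i\in(-1,1)$; the identity $L^{0+}_t(Z)-L^{0-}_t(Z)=2\int_0^t\1_{Z_s=0}\,dZ_s=2\int_0^t\bar\beta_i(s)\,dL^0_s(Z)$ then gives $dL^{y_i,\pm}_t(Y)=(1\pm\bar\beta_i(t))\,dL^{y_i}_t(Y)$. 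Substituting and simplifying yields
$$
dL^{x_i}_t(X)=\big[\phi'_{y,\pm}(t,y_i(t))+\bar\beta_i(t)\vartriangle\phi'_y(t,y_i(t))\big]\,dL^{y_i}_t(Y),
$$
the bracket being strictly positive since it equals $\tfrac12(1+\bar\beta_i(t))\phi'_y(t,y_i(t)+)+\tfrac12(1-\bar\beta_i(t))\phi'_y(t,y_i(t)-)$. Comparing with the local time part of \eqref{eq:Xprems}, whose coefficient is $\vartriangle\phi'_y(t,y_i(t))+\bar\beta_i(t)\phi'_{y,\pm}(t,y_i(t))$, produces \eqref{eq:Xphi} with $\beta_i(t)$ given by \eqref{eq:newbetai} (and incidentally $\beta_i(t)\in(-1,1)$).

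The delicate point is Step 3: the passage to the limit in the occupation-time definition of $L^{x_i}(X)$ must be controlled uniformly near the interface, the linear approximation of $X_s-x_i(s)$ by a multiple of $Y_s-y_i(s)$ being only first order, and one must invoke correctly the relation between the symmetric local time of $Y$ along $y_i$ and its one-sided versions, which relies on $\bar\beta_i\in(-1,1)$. A cleaner alternative for this step is to apply Theorem \ref{thm-peskir} to $r(t,y)=|\phi(t,y)-x_i(t)|$ and to compare the resulting semimartingale decomposition of $|X_t-x_i(t)|$ with the symmetric It\^o--Tanaka formula for $X-x_i$: the martingale and absolutely continuous parts match by Step 2, and matching the terms carried by $dL^{y_i}_t(Y)$ gives the same identity, because $L^{x_i}(X)$ is supported on $\{s:Y_s=y_i(s)\}$. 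All remaining steps are routine substitutions and bookkeeping.
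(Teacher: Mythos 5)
Your proof is correct in outline and, for Steps 1 and 2, identical to the paper's: the first identity is read off the multi-curve It\^o--Peskir formula (Corollary \ref{cor-peskirmulti}), and the coefficients are then transported by $\Phi(t,\cdot)$. The only real content is the local-time conversion, and here your primary argument differs from the paper's. You derive $dL^{x_i}_t(X)=\tfrac12\phi'_y(t,y_i(t)+)\,dL^{y_i,+}_t(Y)+\tfrac12\phi'_y(t,y_i(t)-)\,dL^{y_i,-}_t(Y)$ from the occupation-time definition via a first-order expansion of $\phi$ near the curve, and then use $L^{0+}-L^{0-}=2\int\1_{Z=0}\,dZ$ together with the dynamics of $Z=Y-y_i$ to get $dL^{y_i,\pm}_t(Y)=(1\pm\bar\beta_i(t))\,dL^{y_i}_t(Y)$; the resulting identity $dL^{x_i}_t(X)=[\phi'_{y,\pm}(t,y_i(t))+\bar\beta_i(t)\vartriangle\phi'_y(t,y_i(t))]\,dL^{y_i}_t(Y)$ is exactly the paper's Lemma \ref{lem-tempslocXY}. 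As written, though, the passage to the limit in the occupation-time integral is only heuristic: the replacement of $X_s-x_i(s)$ by its linear approximation and of the $\varepsilon$-window in $X$ by a rescaled window in $Y$ needs uniform control that you assert but do not supply (you acknowledge this yourself). The ``cleaner alternative'' you then propose --- apply the Peskir formula to $r(t,y)=|\phi(t,y)-x_i(t)|$ and compare with the symmetric Tanaka formula for $X-x_i$, matching the terms carried by $dL^{y_i}(Y)$ --- is precisely the paper's proof; the paper's version is careful about the sign bookkeeping (the coefficients for $j\neq i$ pick up a factor $\sgn(Y_t-y_i(t))$, while for $j=i$ the roles of $\vartriangle\phi'_y$ and $\phi'_{y,\pm}$ are swapped because $\phi(t,y_i(t)\pm)-x_i(t)$ changes sign), which is the one point your sketch glosses over. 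So: same architecture, a genuinely different but incompletely justified primary route for the key lemma, with the rigorous (paper's) route correctly identified as the fallback; the one-sided local-time computation is a nice sanity check of the constant $\phi'_{y,\pm}+\bar\beta_i\vartriangle\phi'_y$ and of why $\bar\beta_i\in(-1,1)$ propagates to $\beta_i\in(-1,1)$.
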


\begin{remark}
\label{rem:xiC1}
Note that the curves $x_i$ defined by \eqref{eq:xi-phi} are $C^1$-functions, so that the local times terms in
\eqref{eq:Xphi} are still well defined.

In order to see that, let us focus on $x_1(t)=\phi(t,y_1(t))$. As $\phi$ is in $C(E)\cap\Big(\cap_{i=0}^I C^{1,2}(\overline{D^y_i})\Big)$ one has that 
$\phi$ restricted to $D^y_0$ coincides with a function $\phi_0\in C^{1,2}(E)$, and that $\phi$ restricted to $D^y_1
$ coincides with a function $\phi_1\in C^{1,2}(E)$. Thus, as $\phi$ is continuous, one has
$$
\phi_0(t,y_1(t))=\phi(t,y_1(t))=\phi_1(t,y_1(t)),\quad\forall t\in[0,T].$$
 In particular $x_1(t)=\phi_0(t,y_1(t))$ is as a composition of $C^1$-functions itself of class $C^1$.

\end{remark}

\begin{remark}
\label{rem:variante-newbetai}
Note that 
\begin{equation}
\label{eq:varbetai}
\vartriangle\phi'_y(t,y_i(t))+\bar{\beta}_i(t)\phi'_{y,\pm}(t,y_i(t))=\phi'_y(t,y_i(t)+)(1+\bar{\beta_i}(t))-\phi'_y(t,y_i(t)-)(1-\bar{\beta_i}(t))
\end{equation}
and that
$\phi'_{y,\pm}(t,y_i(t))+\bar{\beta}_i(t)\vartriangle\phi'_y(t,y_i(t))=\phi'_y(t,y_i(t)+)(1+\bar{\beta_i}(t))+\phi'_y(t,y_i(t)-)(1-\bar{\beta_i}(t))$, so that
the new coefficients $\beta_i(t)$ in Proposition \ref{prop:transfo} may be rewritten
\begin{equation}
\label{eq:variante-newbetai}
\beta_i(t)
=\frac{\phi'_y(t,y_i(t)+)(1+\bar{\beta_i}(t))-\phi'_y(t,y_i(t)-)(1-\bar{\beta_i}(t))}{\phi'_y(t,y_i(t)+)(1+\bar{\beta_i}(t))+\phi'_y(t,y_i(t)-)(1-\bar{\beta_i}(t))}.
\end{equation}

\end{remark}

\begin{remark}
Note that the result of Proposition \ref{prop:transfo} is a time inhomogeneous version of Proposition 3.1 in \cite{etore05a} (or equivalently 
Proposition 2.2.1 in \cite{etore06a}).
\end{remark}

\begin{theorem}
\label{thm-EDSTL}

Let $I\in \N^*$. For each $1\leq i\leq I$, let $x_i:[0,T]\to\R$ be a function of class $C^1$,
and assume that $x_i(t)<x_j(t)$ for all $t\in [0,T]$ and all $1\leq i<j\leq I$. 

Let $\sigma\in\Theta(m,M)$ and $b\in\Xi(M)$ for some $0<m<M<\infty$.

Assume that for each $1\leq i\leq I$, the function $\beta_i:[0,T]\to[k,\kappa]$ ($-1<k\leq\kappa<1$) is of class $C^1$, and that 
$|\beta_i'(t)|\leq M$ for any $t\in[0,T]$. 

Then the time inhomogeneous SDE with local time
\begin{equation*}
dX_t=\sigma(t,X_t)dW_t+b(t,X_t)dt+\sum_{i=1}^I\beta_i(t)dL^{x_i}_t(X),\quad t\in [0,T],\quad X_0=x_0
\end{equation*}
(i.e. equation \eqref{eq:X}) has a weak solution.

Assume further that $\sigma$ satisfies the $\mathbf{H}^{(x_i)}$ and $\mathbf{AJ}^{(x_i)}$-hypotheses.

Then the SDE \eqref{eq:X} has a unique strong solution (as it enjoys pathwise uniqueness).
\end{theorem}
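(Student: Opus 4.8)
The plan is to eliminate the local time terms in \eqref{eq:X} by a deterministic, time-dependent space transform that straightens the curves $x_i$ onto the fixed vertical lines $\{y=i\}$, thereby reducing \eqref{eq:X} to a classical SDE with merely bounded (possibly discontinuous) coefficients, to which Theorem \ref{thm:pbmart} and Corollary \ref{cor-legall} apply; the passage back and forth is handled by Proposition \ref{prop:transfo}. Concretely, I would first construct from the data $(\sigma,b,\beta_i,x_i)$ a function $\phi\in C(E)\cap\bigl(\cap_{i=0}^{I}C^{1,2}(\overline{D^{y}_i})\bigr)$, the $D^y_i$ being built from the constant curves $y_i\equiv i$, such that for each $t$ the map $\phi(t,\cdot)$ is an increasing $C^1$-diffeomorphism of $\R$ equal to the identity outside a fixed compact interval containing all the $x_i([0,T])$, with $\phi(t,i)=x_i(t)$ and
\[
\frac{\phi'_y(t,i+)}{\phi'_y(t,i-)}=\frac{1+\beta_i(t)}{1-\beta_i(t)},\qquad 1\le i\le I,
\]
and with $\phi'_y$ bounded above and below away from $0$ and $\phi'_t,\phi''_{yy}$ bounded. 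This construction is elementary (piecewise-smooth in the space variable on each strip $\overline{D^y_i}$, with the prescribed derivative jumps at the levels $i$ and matched to the identity at the two ends), and it is here that the hypotheses $x_i\in C^1$ and $|\beta_i'|\le M$ are used: they guarantee that $\phi$ can be taken of class $C^1$ in time with $\phi'_t$ bounded.

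Writing $\Phi(t,\cdot)=\phi(t,\cdot)^{-1}$ and reading the formulas of Proposition \ref{prop:transfo} in reverse (with auxiliary coefficients $\bar\beta_i\equiv 0$), I would then \emph{define} the bounded coefficients
\[
\bar\sigma(t,y)=\frac{\sigma(t,\phi(t,y))}{\phi'_{y,\pm}(t,y)},\qquad
\bar b(t,y)=\frac{b(t,\phi(t,y))-\phi'_{t,\pm}(t,y)-\tfrac12\,\bar\sigma^2(t,y)\,\phi''_{yy}(t,y)\,\1_{\{y\neq i,\ \forall i\}}}{\phi'_{y,\pm}(t,y)},
\]
and check that $\bar\sigma\in\Theta(\bar m,\bar M)$ and $\bar b\in\Xi(\bar M')$ for suitable constants (all ingredients bounded, $\phi'_{y,\pm}$ bounded below). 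By Corollary \ref{cor-legall} (its weak-existence part, which only needs $\bar\sigma\in\Theta$, $\bar b\in\Xi$) the SDE $dY_t=\bar\sigma(t,Y_t)dW_t+\bar b(t,Y_t)dt$, $Y_0=\Phi(0,x_0)$, has a weak solution $Y$. Applying Proposition \ref{prop:transfo} to this $Y$ (with $y_i\equiv i$, $\bar\beta_i\equiv 0$, transform $\phi$), the chosen derivative jumps of $\phi$ make the new coefficients $\beta_i$ of \eqref{eq:newbetai} equal to the prescribed $\beta_i$, the curves reappear as $\phi(t,i)=x_i(t)$, and the drift and diffusion coefficients are recovered Lebesgue-a.e.\ — the interface set $\Delta_{\bf x}$ carrying no $dt$-mass for $X$ since $\langle X\rangle$ is absolutely continuous with density bounded below by $m^2$. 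Hence $X_t:=\phi(t,Y_t)$ is a weak solution of \eqref{eq:X} with $X_0=x_0$, which proves the first assertion.

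For the strong-existence and uniqueness statement, I would add the hypotheses $\mathbf{H}^{(x_i)}$ and $\mathbf{AJ}^{(x_i)}$ on $\sigma$ and verify that $\bar\sigma$ inherits $\mathbf{H}^{(y_i\equiv i)}$ and $\mathbf{AJ}^{(y_i\equiv i)}$: away from the lines $\{y=i\}$ one has $\bar\sigma(t,y)=\sigma(t,\phi(t,y))/\phi'_y(t,y)$, which is $C^{0,1}$ with bounded $y$-derivative by the chain rule, since $\sigma\in C^{0,1}(E\setminus\Delta_{\bf x})$ with bounded derivative and $\phi$ is smooth with $\phi'_y$ bounded away from $0$ and $\infty$; and $\bar\sigma$ has at most $I$ discontinuity curves, which are precisely the \emph{fixed} lines $\{y=i\}$, along which the jump sizes $t\mapsto|\bar\sigma^2(t,i+)-\bar\sigma^2(t,i-)|$ are continuous, so that the time-averaging inequality $\mathbf{AJ}^{(y_i\equiv i)}$ reduces to finitely many elementary estimates (one for each subset of $\{1,\dots,I\}$) with a uniform constant. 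Corollary \ref{cor-legall} then gives pathwise uniqueness, hence — by Yamada--Watanabe — a unique strong solution for the $Y$-equation. To transfer this: if $X,X'$ solve \eqref{eq:X} on the same filtered space with the same $W$, then Proposition \ref{prop:transfo} applied with the transform $\Phi$ (whose spatial derivative jumps at the $x_i(t)$ are reciprocal to those of $\phi$, hence cancel the $\beta_i$-terms) shows that $\Phi(t,X_t)$ and $\Phi(t,X'_t)$ both solve the $Y$-equation, so they coincide, and therefore $X=X'$; the conclusion then follows once more from Yamada--Watanabe. I expect the main obstacle to be the first step together with this transfer of hypotheses: one must simultaneously arrange the straightening, the prescribed derivative jumps, the matching to the identity at infinity (needed to keep $\phi'_t$ bounded), and the boundedness of all derivatives of $\phi$ — and it is precisely the straightening onto \emph{fixed} lines that makes the $\mathbf{AJ}$-condition survive the transform, a non-vertical image of the interfaces being fatal to it; the remainder is bookkeeping through Proposition \ref{prop:transfo} and Yamada--Watanabe.
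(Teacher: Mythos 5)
Your proof is correct, and it follows the same overall skeleton as the paper's (space transform killing the local times, Stroock--Varadhan for weak existence, Le Gall's criterion via Corollary \ref{cor-legall} for pathwise uniqueness, Yamada--Watanabe, and Proposition \ref{prop:transfo} to pass back and forth). The substantive difference is the choice of transform. The paper uses Le Gall's explicit map $R(t,x)=\int_{x_1(t)}^x\mu(t,z)\,dz$ with $\mu(t,x)=\prod_{x_i(t)\le x}\frac{1-\beta_i(t)}{1+\beta_i(t)}$: it is piecewise \emph{linear} in space (so $r''_{yy}\equiv 0$ and the drift correction is minimal), but the images $y_i(t)=R(t,x_i(t))$ of the interfaces remain \emph{moving} curves (only $y_1\equiv 0$), so that the $\mathbf{AJ}$-property must genuinely be transported from $\sigma$ to $\bar\sigma$ along moving discontinuity curves. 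Your transform instead straightens the interfaces onto the fixed lines $\{y=i\}$, at the price of a non-explicit construction and a nonzero $\phi''_{yy}$ absorbed into $\bar b$; the payoff is exactly the one you identify: once the jumps of $\bar\sigma^2(t,\cdot)$ are confined to finitely many fixed lines with jump sizes $J_i(t)=|\bar\sigma^2(t,i+)-\bar\sigma^2(t,i-)|$ continuous in $t$ (which follows from $\mathbf{H}^{(x_i)}$ and the regularity of $\phi$), the inequality $\sup_t J_i\le C\int_0^T J_i$ holds for each $i$ with some finite $C_i$ unless $J_i\equiv 0$, and taking $C=\max_i C_i$ gives $\mathbf{AJ}^{(y_i\equiv i)}$ for $\bar\sigma$ \emph{automatically}. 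So your route arguably makes the $\mathbf{AJ}^{(x_i)}$ hypothesis on $\sigma$ superfluous for this theorem, which is a genuine (if modest) strengthening; the paper's route keeps everything explicit and avoids any interpolation argument. Your handling of the interface set (the arithmetic-versus-harmonic-mean discrepancy of $\Phi'_{x,\pm}$ at the $x_i(t)$ being invisible because $\{t:X_t=x_i(t)\}$ is $dt$-null) and of the uniqueness transfer via the cancellation $\Phi'_x(t,x_i(t)+)(1+\beta_i)=\Phi'_x(t,x_i(t)-)(1-\beta_i)$ both match the computations in the paper's Lemma \ref{lem-tempslocXY}.

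Two small points to tighten. First, ``increasing $C^1$-diffeomorphism'' is a slip: $\phi(t,\cdot)$ necessarily has derivative jumps at $y=i$, so it is only an increasing homeomorphism lying in $C(E)\cap\big(\cap_{i}C^{1,2}(\overline{D^y_i})\big)$, as you state elsewhere. Second, the construction of $\phi$ is the one step you leave entirely to the reader; it is indeed elementary (e.g.\ prescribe $\phi'_y(t,i-)=1-\beta_i(t)$, $\phi'_y(t,i+)=1+\beta_i(t)$, and on each strip interpolate a strictly positive, jointly $C^1$-in-$t$ derivative profile with the prescribed endpoint values and the prescribed integral $x_{i+1}(t)-x_i(t)\ge\varepsilon>0$, matching the constant $1$ outside $[-K,K]$), but since every subsequent estimate ($\bar\sigma\in\Theta(\bar m,\bar M)$, $\bar b$ bounded, $\mathbf{H}$ for $\bar\sigma$) rests on the uniform bounds for $\phi'_y$, $1/\phi'_y$, $\phi'_t$ and $\phi''_{yy}$, this interpolation should be written out rather than asserted.
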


\begin{remark}
The conditions of Theorem \ref{thm-EDSTL} have to be compared to the conditions in \cite{legall}. In particular, as in \cite{legall},
it is required that the $\beta_i$'s stay in $(-1,1)$. In view of the results in \cite{harrison-shepp} p. 312 at the end of Section~3 stated for the plain standard homogeneous Skew Brownian motion, it should be clear that if for some $1\leq i\leq I$, and some 
$(a,b)\subset [0,T]$, $(a,b)\neq\emptyset$, we have $|\beta_i(t)|>1$ for 
$t\in(a,b)$,
  then there is no possibility to ensure the existence of solutions to \eqref{eq:X} ${\mathbb P}$-a.s (see also \cite{RY} Chap. VI, Exercise 2.24 p. 246 where the equation is written with the right-hand sided local time instead of the symmetric local time). However in this case, it should be also possible to show that there exists an event $\tilde{\Omega}^{x_0}$ with ${\mathbb P}(\tilde{\Omega}^{x_0})<1$ such that solutions to \eqref{eq:X} exist on this event, namely these solutions to \eqref{eq:X} constructed in such a way that they do not hit the curve $t\mapsto x_i(t)$ during the time subinterval where $|\beta_i|>1$.

\end{remark}

\subsection{Proofs}
\label{ss-preuvessec3}

{\bf Proof of Proposition \ref{prop:transfo}.} Applying Corollary \ref{cor-peskirmulti} we get
\begin{equation*}
\begin{array}{lll}
dX_t&=&\phi'_{t,\pm}(t,Y_t)dt+\phi'_{y,\pm}(t,Y_t)dY_t
+\frac 1 2\phi''_{yy}(t,Y_t)\bar{\sigma}^2(t,Y_t)\1_{\{Y_t\neq y_i(t),\;1\leq i\leq I \}}dt\\
\\
&&+\sum_{i=1}^I\vartriangle\phi'_y(t,Y_t)dL^{y_i}_t(Y)\\
&=&(\bar{\sigma}\phi'_{y,\pm})(t,Y_t)dW_t+[\phi'_{t,\pm}+\bar{b}\phi'_{y,\pm}](t,Y_t)dt+\frac 1 2 (\bar{\sigma}^2\phi''_{yy})(t,Y_t)\1_{\{Y_t\neq y_i(t),\,\forall 1\leq i\leq I\}}dt\\
\\
&&+\sum_{i=1}^I[\vartriangle\phi'_y(t,y_i(t))+\bar{\beta}_i(t)\phi'_{y,\pm}(t,y_i(t))]\,dL^{y_i}_t(Y),\\
\end{array}
\end{equation*}
where we have used the fact that $dL^{y_i}_t(Y)=\1_{Y_t=y_i(t)}dL^{y_i}_t(Y)$, for any $1\leq i\leq I$.

Thus, the first part of Proposition \ref{prop:transfo} is proved. To prove the second part it suffices to use the following lemma.

\begin{lemma}
\label{lem-tempslocXY}
In the above context and under \eqref{eq:phicrois} we have
$$ d L^{y_i}_t(Y)=\frac{ dL^{x_i}_t(X)}{\phi'_{y,\pm}(t,y_i(t))+\bar{\beta}_i(t)\vartriangle\phi'_y(t,y_i(t))},\quad\forall 1\leq i\leq I.$$
\end{lemma}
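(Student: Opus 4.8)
The plan is to express $L^{x_i}_t(X)$ via its defining occupation-time limit and to substitute $X_t=\phi(t,Y_t)$, reducing everything to quantities involving $Y$ and the known derivatives of $\phi$ on each side of the curve. First I would recall that by definition $L^{x_i}_t(X)=L^0_t(X-x_i)$, so
$$
L^{x_i}_t(X)=\P\text{-}\lim_{\varepsilon\downarrow 0}\frac{1}{2\varepsilon}\int_0^t\1_{|X_s-x_i(s)|<\varepsilon}\,d\langle X\rangle_s .
$$
From \eqref{eq:Xprems} the continuous martingale part of $X$ is $(\bar\sigma\phi'_{y,\pm})(t,Y_t)\,dW_t$, hence $d\langle X\rangle_s=(\bar\sigma\phi'_{y,\pm})^2(s,Y_s)\,ds$; moreover on the set $\{|X_s-x_i(s)|<\varepsilon\}$ the process $Y_s$ is, for $\varepsilon$ small, confined to a small two-sided neighbourhood of $y_i(s)$ because $\phi(s,\cdot)$ is continuous and strictly increasing (by \eqref{eq:phicrois} on each side and continuity across the curve), with $x_i(s)=\phi(s,y_i(s))$. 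So the occupation measure of $X$ near the curve $x_i$ transfers to an occupation measure of $Y$ near the curve $y_i$, weighted by the local slope of $\phi$.

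The key computation is the change-of-variables in the occupation-time integral. Writing $\Phi(s,\cdot)=\phi(s,\cdot)^{-1}$ and using that $\langle X\rangle$ and $\langle Y\rangle$ are related through the local behaviour of $\phi$, one should obtain, by the occupation-times formula applied separately on $D^y_{i-1}$ and $D^y_i$ (i.e. approaching $y_i(s)$ from the left and from the right), that
$$
dL^{x_i}_s(X)=\big[\,\tfrac{\phi'_y(s,y_i(s)+)+\phi'_y(s,y_i(s)-)}{2}+\bar\beta_i(s)\,\tfrac{\phi'_y(s,y_i(s)+)-\phi'_y(s,y_i(s)-)}{2}\,\big]\,dL^{y_i}_s(Y),
$$
that is $dL^{x_i}_s(X)=\big[\phi'_{y,\pm}(s,y_i(s))+\bar\beta_i(s)\vartriangle\phi'_y(s,y_i(s))\big]\,dL^{y_i}_s(Y)$, which is the claim after dividing. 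A clean way to pin down the two one-sided contributions is to apply the Itô--Tanaka (or Itô--Peskir) formula to the functions $y\mapsto(\phi(s,y)-x_i(s))^{+}$ and $y\mapsto(\phi(s,y)-x_i(s))^{-}$ along the curve $y_i$, identify the local-time term of $X$ at $x_i(t)$ appearing on the left with $\tfrac12 dL^{x_i}_t(X)$, and on the right collect the jump of the derivative of $\phi$ across $y_i$ together with the contribution of the $dL^{y_i}_t(Y)$ term coming from \eqref{eq:EDS-Ytransfo}; the coefficient $\bar\beta_i(s)$ enters precisely because $Y$ itself carries a $\bar\beta_i(s)\,dL^{y_i}_s(Y)$ term that gets multiplied by the one-sided slopes $\phi'_y(s,y_i(s)\pm)$.

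The main obstacle I expect is justifying the two-sided transfer of local time rigorously when $\phi'_y$ is genuinely discontinuous across the curve: one must be careful that the symmetric local time $L^{x_i}(X)$ really splits as the average of the left and right contributions with the stated weights, and that no extra drift or bracket term sneaks in from the time-dependence of the curves $x_i$ and $y_i$ (this is where the $C^1$ regularity of the $y_i$'s, hence of the $x_i$'s by Remark \ref{rem:xiC1}, and the continuity of the one-sided limits $\phi'_y(t,y_i(t)\pm)$ in $t$ are used). Once the one-sided occupation-time identities are established, combining them with the weights $(1+\bar\beta_i)$ and $(1-\bar\beta_i)$ dictated by \eqref{eq:EDS-Ytransfo} — exactly as in \eqref{eq:varbetai} — yields the denominator $\phi'_{y,\pm}(t,y_i(t))+\bar\beta_i(t)\vartriangle\phi'_y(t,y_i(t))$, completing the proof.
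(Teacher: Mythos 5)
Your ``clean way'' is in fact the paper's proof: the paper computes $d|X_t-x_i(t)|$ twice, once by the symmetric Tanaka formula applied to $X-x_i$ (inserting \eqref{eq:Xprems}), and once by applying the already-proved change-of-variable formula \eqref{eq:Xprems} to $\zeta(t,y)=|\phi(t,y)-x_i(t)|$ (equivalent to your positive/negative parts), and then identifies the two local-time coefficients; the denominator $\phi'_{y,\pm}(t,y_i(t))+\bar{\beta}_i(t)\vartriangle\phi'_y(t,y_i(t))$ appears exactly as you predict, because the sign flip of $\phi(t,\cdot)-x_i(t)$ across $y_i$ swaps the roles of $\vartriangle\zeta'_y$ and $\zeta'_{y,\pm}$. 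The occupation-time-density transfer you sketch first is \emph{not} the paper's route and is the shakier half of your plan (the $\varepsilon$-neighbourhood of $x_i(s)$ maps to an asymmetric neighbourhood of $y_i(s)$, $d\langle X\rangle$ carries the weight $(\phi'_y)^2$, and one would need one-sided local times to handle the $\bar\beta_i\,dL^{y_i}(Y)$ contribution), but since you correctly fall back on the Tanaka comparison, the proposal is sound and takes essentially the same approach as the paper.
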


\begin{proof}
Let $1\leq i\leq I$. On one side we  apply the symmetric Tanaka formula to the process $X-x_i$. We get
\begin{equation}
\label{eq:tanaka}
\begin{array}{lll}
d|X_t-x_i(t)|&=&\sgn(X_t-x_i(t))d(X_t-x_i(t))+dL^0_t(X-x_i)\\
\\
&=&dL^{x_i}_t(X)-\sgn(Y_t-y_i(t))dx_i(t)\\
\\
&&+\sgn(Y_t-y_i(t))\sigma(t,\phi(t,Y_t))dW_t+\sgn(Y_t-y_i(t))b(t,\phi(t,Y_t))dt\\
\\
&&+\sum_{j\neq i}\sgn(Y_t-y_i(t))[\vartriangle\phi'_y(t,y_j(t))+\bar{\beta}_j(t)\phi'_{y,\pm}(t,y_j(t))]\,dL^{y_j}_t(Y).\\
\end{array}
\end{equation}
In the above expression we have first used the fact that $\sgn(X_t-x_i(t))=\sgn(Y_t-y_i(t))$ for any $t\in [0,T]$ (as $\phi(t,\cdot)$ is stricly increasing). Second we have used the fact that with the symmetric sign function we have
$$
\sgn(X_t-x_i(t))=\sgn(Y_t-y_i(t))=0 \text{ for any } t\in [0,T] \text{ s.t. } Y_t=y_i(t).$$
Third we have used $dL^{y_j}_t(Y)=\1_{Y_t=y_j(t)}dL^{y_j}_t(Y)$, for any $1\leq j\leq I$.

On the other side we may apply the first part of Proposition \ref{prop:transfo} (that is equation \eqref{eq:Xprems}; we stress that at this stage this part is already proved) with the semimartingale $Y$ and the function $\zeta:(t,y)\mapsto |\phi(t,y)-x_i(t)|$. We get
\begin{equation}
\label{eq:peskir2}
\begin{array}{lll}
d|X_t-x_i(t)|&=&d|\phi(t,Y_t)-x_i(t)|\\
\\
&=&(\bar{\sigma}\zeta'_{y,\pm})(t,Y_t)dW_t+[\zeta'_{t,\pm}+\bar{b}\zeta'_{y,\pm}](t,Y_t)dt+\frac 1 2 (\bar{\sigma}^2\zeta''_{yy})(t,Y_t)\1_{\{Y_t\neq y_j(t),\,\forall 1\leq j\leq I\}}dt\\
\\
&&+\sum_{j=1}^I[\vartriangle\zeta'_y(t,y_j(t))+\bar{\beta}_j(t)\zeta'_{y,\pm}(t,y_j(t))]\,dL^{y_j}_t(Y)\\
\\
&=&-\sgn(Y_t-y_i(t))dx_i(t)+\sgn(Y_t-y_i(t))\sigma(t,\phi(t,Y_t))dW_t+\sgn(Y_t-y_i(t))b(t,\phi(t,Y_t))dt\\
\\
&&+\sum_{j\neq i}\sgn(Y_t-y_i(t))[\vartriangle\phi'_y(t,y_j(t))+\bar{\beta}_j(t)\phi'_{y,\pm}(t,y_j(t))]\,dL^{y_j}_t(Y)\\
\\
&&+[\phi'_{y,\pm}(t,y_i(t))+\bar{\beta}_i(t)\vartriangle\phi'_y(t,y_i(t))]dL^{y_i}_t(Y).\\
\end{array}
\end{equation}
In \eqref{eq:peskir2} we have used several facts. 

First, as $x_i(t)$ is of class $C^1$ (Remark \ref{rem:xiC1}), we have
$$\zeta'_{t,\pm}(t,y)=-\sgn(y-y_i(t))x_i'(t)+\frac 1 2(\sgn_+(y-y_i(t))\phi'_t(t,y+)+\sgn_-(y-y_i(t))\phi'_t(t,y-)),$$
where $\sgn_\pm$ denote the right and left sign functions.

In the same manner we have that
$$
\zeta'_{y,\pm}(t,y)=\frac 1 2\big(\sgn_+(y-y_i(t))\phi'_y(t,y+)+\sgn_-(y-y_i(t))\phi'_y(t,y-)\big)$$
and
$$
\zeta''_{yy}(t,y)\1_{\{y\neq y_j(t),\,\forall 1\leq j\leq I\}}=\sgn(y-y_i(t))\phi''_{yy}(t,y)\1_{\{y\neq y_j(t),\,\forall 1\leq j\leq I\}}
$$
Second, focusing for a while on $(\bar{b}\zeta'_{y,\pm})(t,Y_t)dt$, we claim that this is equal to
$\sgn(Y_t-y_i(t))(\bar{b}\phi'_{y,\pm})(t,Y_t)dt$ (we recall that $\sgn$ denotes the symmetric sign function). 
Indeed, using Exercise VI.1.15 in \cite{RY} (some extension of the occupation times formula), to the semimartingale $Y-y_i$, one can show that
\begin{equation}
\label{eq:supp1}
\int_0^T\1_{Y_t=y_i(t)}dt=0\quad\P-\text{a.s.}
\end{equation}
So that (a.s.)
$$
\begin{array}{lll}
(\bar{b}\zeta'_{y,\pm})(t,Y_t)dt&=&\frac 1 2\bar{b}(t,Y_t)\big(\sgn_+(Y_t-y_i(t))\phi'_y(t,Y_t+)+\sgn_-(Y_t-y_i(t))\phi'_y(t,Y_t-)\big)\1_{Y_t\neq y_i(t)}dt\\
\\
&=&
\sgn(Y_t-y_i(t))(\bar{b}\phi'_{y,\pm})(t,Y_t)\1_{Y_t\neq y_i(t)}dt=\sgn(Y_t-y_i(t))(\bar{b}\phi'_{y,\pm})(t,Y_t)dt.
\end{array}
$$
In the same manner one can see that 
$$\frac 1 2 (\bar{\sigma}^2\zeta''_{yy})(t,Y_t)\1_{\{Y_t\neq y_j(t),\,\forall 1\leq j\leq I\}}dt
=\frac 1 2 \sgn(Y_t-y_i(t))(\bar{\sigma}^2\phi''_{yy})(t,Y_t)\1_{\{Y_t\neq y_j(t),\,\forall 1\leq j\leq I\}}dt,$$
and, using $dx_i(t)=x_i'(t)dt$, that
$$\zeta'_{t,\pm}(t,Y_t)dt=\sgn(Y_t-y_i(t))\phi'_{t,\pm}(t,Y_t)dt-\sgn(Y_t-y_i(t))dx_i(t).$$
Third, using Itô isometry in order to use the above arguments one may also show that
$(\bar{\sigma}\zeta'_{y,\pm})(t,Y_t)dW_t=\sgn(Y_t-y_i(t))(\bar{\sigma}\phi'_{y,\pm})(t,Y_t)dW_t$.

To sum up, using the definition of $b(t,x)$, $\sigma(t,x)$ we have that
$$
\begin{array}{l}
(\bar{\sigma}\zeta'_{y,\pm})(t,Y_t)dW_t+[\zeta'_{t,\pm}+\bar{b}\zeta'_{y,\pm}](t,Y_t)dt+\frac 1 2 (\bar{\sigma}^2\zeta''_{yy})(t,Y_t)\1_{\{Y_t\neq y_j(t),\,\forall 1\leq j\leq I\}}dt\\
\\
=-\sgn(Y_t-y_i(t))dx_i(t)+\sgn(Y_t-y_i(t))\sigma(t,\phi(t,Y_t))dW_t+\sgn(Y_t-y_i(t))b(t,\phi(t,Y_t))dt.
\end{array}
$$
Fourth (we are now turning to the local time terms) for $j<i$, we have $y_j(t)<y_i(t)$ and thus $\phi(t,y_j(t))<x_i(t)$ for any $t\in[0,T]$, which
leads to 
$$[\vartriangle\zeta'_y(t,y_j(t))+\bar{\beta}_j(t)\zeta'_{y,\pm}(t,y_j(t))]=-[\vartriangle\phi'_y(t,y_j(t))+\bar{\beta}_j(t)\phi'_{y,\pm}(t,y_j(t))].$$
Using $dL^{y_j}_t(Y)=\1_{Y_t=y_j(t)}dL^{y_j}_t(Y)$ we then get that
$$
[\vartriangle\zeta'_y(t,y_j(t))+\bar{\beta}_j(t)\zeta'_{y,\pm}(t,y_j(t))]\,dL^{y_j}_t(Y)
=\sgn(Y_t-y_i(t))[\vartriangle\phi'_y(t,y_j(t))+\bar{\beta}_j(t)\phi'_{y,\pm}(t,y_j(t))]\,dL^{y_j}_t(Y)$$
We have the same result for $j>i$ (plus sign replaces minus sign).

Fifth, we now examine what happens for $j=i$.  The crucial fact is that because of the different sign of $\phi(t,y_i(t)\pm)-x_i(t)$ we have
$$
[\vartriangle\zeta'_y(t,y_i(t))+\bar{\beta}_i(t)\zeta'_{y,\pm}(t,y_i(t))]=[\phi'_{y,\pm}(t,y_i(t))+\bar{\beta}_i(t)\vartriangle\phi'_y(t,y_i(t))].$$
Therefore \eqref{eq:peskir2}.

Comparing \eqref{eq:tanaka} and \eqref{eq:peskir2} we get the desired result.
\end{proof}
\vspace{0.5cm}

{\bf Proof of Theorem \ref{thm-EDSTL}.}
Inspired by \cite{legall}, we will use the following bijection in space $r(t,\cdot)$ (for any $t\in [0,T]$),
that we now define.

For any $t\in [0,T]$ we define
\begin{equation}
\label{eq:def-mu}
\mu(t,x)=\prod_{x_i(t)\leq x}\frac{1-\beta_i(t)}{1+\beta_i(t)}
\end{equation}

(with the convention that $\mu(t,x)=1$ for any $x<x_1(t)$).

Let then 
\begin{equation}
\label{eq:defi-R}
R(t,x)=\int_{x_1(t)}^x\mu(t,z)dz.
\end{equation}

As $\mu(t,z)$ is strictly positive for any $z\in\R$ the function $R(t,\cdot)$ is strictly increasing. Thus we can define
\begin{equation}
\label{eq:defi-r}
r(t,y)=\big[ R(t,\cdot) \big]^{-1}(y).
\end{equation}
For any $1\leq i\leq I$ we define
\begin{equation}
\label{eq:def-yi}
y_i(t)=R(t,x_i(t))
\end{equation}
(note that $y_1\equiv 0$). It is easy to check that 
\begin{equation}
\label{eq:defi-r2}
r(t,y)=\int_0^y\alpha(t,z)dz+x_1(t)
\end{equation}
 with
\begin{equation}
\label{eq:def-alpha}
\alpha(t,y)=\prod_{y_i(t)\leq y}\frac{1+\beta_i(t)}{1-\beta_i(t)}
\end{equation}
(with  $\alpha(t,y)=1$ for any $y<y_1(t)$).
Note that  the function $r(t,\cdot)$ is strictly increasing too.

\vspace{0.2cm}
Let us check that $R$ is in $C(E)\cap\Big(\cap_{i=0}^I C^{1,2}(\overline{D^x_i})\Big)$ and that
$r$ is in $C(E)\cap\Big(\cap_{i=0}^I C^{1,2}(\overline{D^y_i})\Big)$. We focus on $R(t,x)$, as the computations are similar for $r(t,y)$.

Using \eqref{eq:def-mu}\eqref{eq:defi-R} it is easy to check that $R(t,x)$ coincides on $D^x_0$ with the function
$R_0(t,x)=x-x_1(t)$. On $D^x_i$, $1\leq i\leq I$, it coincides with the function
$$
R_i(t,x)=\sum_{j=1}^{i-1}\Big\{\prod_{k\leq j}\frac{1-\beta_k(t)}{1+\beta_k(t)}\Big\}(x_{j+1}(t)-x_j(t))
+\Big\{\prod_{k\leq i}\frac{1-\beta_k(t)}{1+\beta_k(t)}\Big\}(x-x_i(t)).$$
Obviously, all the functions $R_i(t,x)$, $0\leq i\leq I$ are in $C^{1,2}(E)$, and thus we see that $R(t,x)$ is in $\cap_{i=0}^I C^{1,2}(\overline{D^x_i})$.

To see that $R(t,x)$ is in $C(E)$ it remains to prove that it is continuous at any point $(t_0,x_0)\in\Delta_{\bf x}$. For such a point we have
$(t_0,x_0)=(t_0,x_i(t_0))$, for some $t_0\in[0,T]$ and some $1\leq i\leq I$. But, together with the relationship
$$
R(t_0,x_i(t_0))=R_{i-1}(t_0,x_i(t_0))=R_i(t_0,x_i(t_0))$$
the continuity of $R_{i-1}$ and $R_i$ then yields the desired result. 
Thus, $R$ is indeed in $C(E)\cap\Big(\cap_{i=0}^I C^{1,2}(\overline{D^x_i})\Big)$.
Note that this implies that the $y_i$'s defined by \eqref{eq:def-yi} are of class $C^1$ (by the same arguments as in Remark~\ref{rem:xiC1}).

\vspace{0.3cm}

We then set
\begin{equation}
\label{eq:defi-sigma}
\bar{\sigma}(t,y)=\frac{\sigma(t,r(t,y))}{r'_{y,\pm}(t,y)}\quad\text{and}\quad
\bar{b}(t,y)=\frac{b(t,r(t,y))}{r'_{y,\pm}(t,y)}-\frac{r'_{t,\pm}(t,y)}{r'_{y,\pm}(t,y)}
\end{equation}

It is easy to check that $\bar{\sigma}\in\Theta(\bar{m},\bar{M})$ and $\bar{b}\in\Xi(\bar{M})$ for some 
$0<\bar{m}<\bar{M}<\infty$.

From now on the starting point  $x_0\in\R$ is fixed.
By Corollary \ref{cor-legall} we have the existence of a weak solution~$Y$ to
\begin{equation}
\label{eq:EDSY}
dY_t=\bar{\sigma}(t,Y_t)dW_t+\bar{b}(t,Y_t)dt,\quad Y_0=R(0,x_0).
\end{equation}
We wish now to use the second part of Proposition \ref{prop:transfo}, with the function $r(t,y)$ and the process~$Y$ (and the curves $y_i$).
Note that by construction we have
$$
(\bar{\sigma}r'_{y,\pm})(t,R(t,x))=\sigma(t,x),
$$
$$
[r'_{t,\pm}+\bar{b}r'_{y,\pm}](t,R(t,x))+\frac 1 2 (\bar{\sigma}^2r''_{yy})(t,R(t,x))\1_{\{x\neq x_i(t),\,\forall 1\leq i\leq I\}}
=b(t,x),
$$
(we have used in particular $r''_{yy}\equiv 0$ in the above expression) and
$$
\frac{\vartriangle r'_y(t,y_i(t))}{r'_{y,\pm}(t,y_i(t))}=
\frac{\prod_{j<i}\frac{1+\beta_j(t)}{1-\beta_j(t)} \frac 1 2\big( \frac{1+\beta_i(t)}{1-\beta_i(t)} -1  \big) }
{\prod_{j<i}\frac{1+\beta_j(t)}{1-\beta_j(t)} \frac 1 2\big( \frac{1+\beta_i(t)}{1-\beta_i(t)} +1  \big)  }
=\frac{ \frac{2\beta_i(t)}{1-\beta_i(t)} }{ \frac{2}{1-\beta_i(t)}}=\beta_i(t)
$$
(here we have computed \eqref{eq:newbetai} using the fact that there is no local time term in \eqref{eq:EDSY}).

So that by setting
\begin{equation}
\label{eq:XrY}
X_t=r(t,Y_t),\quad\forall t\in[0,T]
\end{equation}
we immediately see by Proposition \ref{prop:transfo} that $X$ is a weak solution to \eqref{eq:X}.

In order to prove the last part of the theorem, we first notice that $\bar{\sigma}$ satisfies the $\mathbf{H}^{(y_i)}$ and $\mathbf{AJ}^{(x_i)}$-hypotheses.
Thus \eqref{eq:EDSY} enjoys pathwise uniqueness (Corollary \ref{cor-legall}).
Assume $X'$ is a second solution to \eqref{eq:X}, then we could show that $Y'_t=R(t,X'_t)$ is a solution to
\eqref{eq:EDSY}. Thus, using the pathwise uniqueness property of \eqref{eq:EDSY}, we would show that pathwise uniqueness holds for
\eqref{eq:X}. Therefore Theorem \ref{thm-EDSTL} is proved.

\section{Feynman-Kac formula: link with a parabolic transmission problem}
\label{sec:feynman}

Assume the curves $x_i$, $1\leq i\leq I$ and the coefficients $\beta_i$, $1\leq i\leq I$ are as in Theorem \ref{thm-EDSTL}, $b$ is in 
$\Xi(M)\cap C(E\setminus\Delta_{\bf x})$, and $\sigma$ is in $\Theta(m,M)\cap C(E\setminus\Delta_{\bf x})$.

For $\lambda\geq 0$, a source term $g\in C^{}_c(E)$ and a terminal condition $f\in C_0(\R)\cap L^2(\R)$, we will call a classical solution of the parabolic transmission problem $(\cP^\lambda_{\Delta_{\bf x}}(\sigma,b,\beta))$ a function $u(t,x)$ that is of class 
$C(E)\cap C^{1,2}(E^\circ\setminus\Delta_{\bf x})$, is such that for all $1\leq i\leq I$ the limits $u'_t(t,x_i(t)\pm)$, $u'_{x}(t,x_i(t)\pm)$ 
and $u''_{xx}(t,x_i(t)\pm)$ exist and are continuous as functions of $t\in [0,T)$,
 and that satisfies
$$
(\mathcal{P}^\lambda_{\Delta_{\bf x}}(\sigma,b,\beta))
\left\{
\begin{array}{rcll}
\big[u'_t+\frac 1 2\sigma^2u''_{xx}+b\,u'_x-\lambda u\big](t,x)&=&g(t,x)&\forall (t,x)\in E^\circ\setminus\Delta_{\bf x} \\
\\
(1+\beta_i(t))u'_x(t,x_i(t)+) &=& (1-\beta_i(t))u'_x(t,x_i(t)-)& \forall 1\leq i\leq I,\,\forall t\in[0,T)\;\;(\star) \\
\\
u(T,x)&=&f(x)&\forall x\in \R.\\
\\
\lim_{|x|\to\infty}|u(t,x)|&=&0&\forall t\in[0,T].
\end{array}
\right.
$$

In particular we stress that the first and second line of this system of equations are required to hold in the classical sense, i.e. pointwise.

The question whether a classical solution $u(t,x)$ exists to $(\mathcal{P}^\lambda_{\Delta_{\bf x}}(\sigma,b,\beta))$ will be discussed in Section
\ref{sec:EDP} (see Theorem \ref{thm:sol-classique}), with the help of an equivalent formulation of this parabolic transmission problem, in a more divergence-like form 
(Subsection \ref{ssec:eq-form}).  The condition $(\star)$ will be called the {\it transmission condition} in the sequel.

For the moment, assuming in this section the existence of such a solution $u(t,x)$, we draw some consequences on the solution $X$ of \eqref{eq:X}: we have a Feynman-Kac formula linking $X$ and $u(t,x)$. We will see in Section \ref{sec:markov} that the properties of $u(t,x)$ allow to say more on $X$: we can prove
that $X$ is a Feller time inhomogeneous Markov process and identify the infinitesimal generator of the space-time process~$\tilde{X}$.

We have the following result.

\begin{theorem}
\label{thm:feynman}
Any classical solution $u(t,x)$ of  $(\mathcal{P}^\lambda_{\Delta_{\bf x}}(\sigma,b,\beta))$ admits the stochastic representation
\begin{equation*}
u(t,x)=\E^{t,x}\Big[ f(X_T)e^{-\lambda(T-t)}-\int_t^Tg(s,X_s)e^{-\lambda(s-t)}ds \Big]
\end{equation*}
where $X$ is the solution to \eqref{eq:X}; in particular such a classical solution $u(t,x)$ is unique.
\end{theorem}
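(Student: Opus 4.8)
The plan is to apply the change of variable formula of Proposition~\ref{prop:transfo} (via the It\^o--Peskir formula, Corollary~\ref{cor-peskirmulti}) to the function $u$ and the process $X$ solving~\eqref{eq:X}, and then read off the Feynman--Kac representation by a martingale argument. First I would fix $(t,x)\in E^\circ$ and work under $\P^{t,x}$, so that $X$ starts from $x$ at time $t$ and solves \eqref{eq:X} on $[t,T]$. Since $u\in C(E)\cap C^{1,2}(E^\circ\setminus\Delta_{\bf x})$ with one-sided limits of $u'_t$, $u'_x$, $u''_{xx}$ along each curve $x_i$, and since the $x_i$ are $C^1$, I am exactly in the situation covered by Corollary~\ref{cor-peskirmulti}. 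Applying it to $Y=X$ (whose bracket is $d\langle X\rangle_s=\sigma^2(s,X_s)\,ds$ because the local time terms do not contribute to the quadratic variation) and to $v(s,y):=e^{-\lambda(s-t)}u(s,y)$ gives a decomposition of $v(s,X_s)$ into a $ds$-drift part, a $dW_s$-martingale part, and a sum of $dL^{x_i}_s(X)$ terms.

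The next step is to identify each piece. In the drift part, using the symmetric conventions, $\tfrac12\big(v'_t(s,X_s+)+v'_t(s,X_s-)\big)+b(s,X_s)\cdot\tfrac12\big(v'_x(s,X_s+)+v'_x(s,X_s-)\big)+\tfrac12\sigma^2(s,X_s)v''_{xx}(s,X_s)\mathbf 1_{\{X_s\neq x_i(s)\,\forall i\}}$; on the set $\{X_s\neq x_i(s)\ \forall i\}$ this equals $e^{-\lambda(s-t)}\big[u'_t+\tfrac12\sigma^2u''_{xx}+b\,u'_x-\lambda u\big](s,X_s)=e^{-\lambda(s-t)}g(s,X_s)$ by the first line of $(\mathcal P^\lambda_{\Delta_{\bf x}}(\sigma,b,\beta))$. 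The complementary set $\{X_s=x_i(s)\text{ for some }i\}$ has zero $ds$-measure by \eqref{eq:supp1} (Exercise VI.1.15 in \cite{RY}), so it may be discarded inside the Lebesgue integral. For the local time terms, the coefficient of $dL^{x_i}_s(X)$ is $\vartriangle u'_x(s,x_i(s))+\beta_i(s)\,u'_{x,\pm}(s,x_i(s))=\tfrac12\big[u'_x(s,x_i(s)+)(1+\beta_i(s))-u'_x(s,x_i(s)-)(1-\beta_i(s))\big]$, which vanishes identically by the transmission condition $(\star)$. Hence all local time contributions disappear.

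What remains is
$$
e^{-\lambda(s-t)}u(s,X_s)=u(t,x)+\int_t^s e^{-\lambda(r-t)}g(r,X_r)\,dr+\int_t^s e^{-\lambda(r-t)}(\sigma\,u'_{x,\pm})(r,X_r)\,dW_r.
$$
Since $u\in C(E)$ vanishes at spatial infinity and $E^\circ$ is bounded in time, $u$ is bounded on $E$; likewise $\sigma$ is bounded, and $u'_{x,\pm}$ is bounded on $E^\circ$ because $u'_x$ is continuous on $E^\circ\setminus\Delta_{\bf x}$ with bounded one-sided limits along the (finitely many, $C^1$) curves. Therefore the stochastic integral is a genuine $\P^{t,x}$-martingale on $[t,T]$, and $g\in C_c(E)$ is bounded. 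Taking expectations at $s=T$ and using the terminal condition $u(T,\cdot)=f$ yields
$$
e^{-\lambda(T-t)}\E^{t,x}[f(X_T)]=u(t,x)+\E^{t,x}\!\Big[\int_t^T e^{-\lambda(s-t)}g(s,X_s)\,ds\Big],
$$
which is the claimed formula after rearranging. Uniqueness of the classical solution is then immediate: any two classical solutions admit the same stochastic representation, hence coincide on $E^\circ$, and by continuity on all of $E$.

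The main obstacle is purely a verification point rather than a conceptual one: one must be careful that the integrability/boundedness needed to make the $dW$-integral a true martingale (not just a local martingale) is in force, and that the pointwise identities coming from $(\mathcal P^\lambda_{\Delta_{\bf x}}(\sigma,b,\beta))$ may legitimately be inserted inside the stochastic and Lebesgue integrals — in particular that the exceptional time set $\{s:X_s\in\{x_i(s)\}\}$ is Lebesgue-null, which is precisely \eqref{eq:supp1}. Modulo these checks, the computation is a direct substitution into Corollary~\ref{cor-peskirmulti} followed by the standard Feynman--Kac martingale argument.
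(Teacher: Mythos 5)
Your computation up to the identity
$$
e^{-\lambda(s-t)}u(s,X_s)=u(t,x)+\int_t^s e^{-\lambda(r-t)}g(r,X_r)\,dr+\int_t^s e^{-\lambda(r-t)}(\sigma\,u'_{x,\pm})(r,X_r)\,dW_r
$$
is exactly the paper's: same use of the It\^o--Peskir formula through Proposition \ref{prop:transfo}, same cancellation of the local time terms via $(\star)$ and \eqref{eq:varbetai}, same treatment of the exceptional time set via \eqref{eq:supp1}. The gap is in the final step, where you take $s=T$ and assert that the stochastic integral is a true martingale because ``$u'_{x,\pm}$ is bounded on $E^\circ$.'' That boundedness does not follow from the definition of a classical solution. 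First, $E^\circ=[0,T)\times\R$ is unbounded in space, and continuity of $u'_x$ on $E^\circ\setminus\Delta_{\bf x}$ together with existence of one-sided limits along finitely many curves gives no control as $|x|\to\infty$; only $u$ itself is assumed to vanish at infinity. Second, and more seriously, nothing prevents $u'_x$ from blowing up as $t\uparrow T$: the terminal datum $f$ is merely in $C_0(\R)\cap L^2(\R)$, and already for the heat equation with such data one typically has $u'_x(t,x)\sim (T-t)^{-1/2}$, whose square is not integrable in time up to $T$. So the unlocalized stochastic integral need not be a martingale, and you cannot simply evaluate at $s=T$ and take expectations.

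The paper's proof repairs precisely this point by localizing: it stops at $s=(T-\delta)\wedge\tau_n$ with $\tau_n=\inf\{s\geq t:|X_s|\geq n\}$. On the compact region $[t,T-\delta]\times[-n,n]$ the function $u'_{x,\pm}$ \emph{is} bounded (by compactness of the closures of the subdomains and continuity of the one-sided limits on $[0,T)$), so the stopped stochastic integral has zero expectation. One then lets $n\to\infty$ and $\delta\downarrow 0$ and passes to the limit by dominated convergence, using only the boundedness of $u$ itself (which holds since $u\in C_0(E)$) and of $g$ --- not of $u'_x$. Your argument needs this localization step inserted before taking expectations; with it, the rest of your proof, including the uniqueness conclusion, goes through as written.
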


\begin{remark}
The uniqueness of $u(t,x)$ in Theorem \ref{thm:feynman} comes from the uniqueness in law of the weak solution~$X$ (see Subsection \ref{ss:pbmart}).
\end{remark}

\begin{proof}
We will follow the lines of the proof of Theorem~5.7.6 in \cite{kara}, and use our Proposition \ref{prop:transfo} in the computations.
Let $t\in[0,T)$. Applying Proposition \ref{prop:transfo} and equation \eqref{eq:varbetai} we get for any $s\in[t,T)$,
\begin{equation}
\label{eq:feyn1}
\begin{array}{lll}
u(s,X_s)e^{-\lambda(s-t)}-u(t,X_t)
&=&\int_t^su'_{x,\pm}(v,X_v)e^{-\lambda(v-t)}\sigma(v,X_v)dW_v\\
\\
&&+\int_t^se^{-\lambda(v-t)}\big[ u'_{t,\pm}+bu'_{x,\pm} -\lambda u \big](v,X_v)dv\\
\\
&&+\frac 1 2 \int_s^te^{-\lambda(v-s)}u''_{xx}(v,X_v)\sigma^2(v,X_v)\1_{\{X_v\neq x_i(v),1\leq i\leq I\}}dv\\
\\
&&+\frac 1 2\sum_{i=1}^I\int_t^s\big[ (1+\beta_i(v))u'_x(v,X_v+) - (1-\beta_i(v))u'_x(v,X_v-) \big]dL^{x_i}_v(X)\\
\\
&=&\int_t^su'_{x,\pm}(v,X_v)e^{-\lambda(v-t)}\sigma(v,X_v)dW_v+\int_t^se^{-\lambda(v-t)}g(v,X_v)dv\\
\end{array}
\end{equation}
where we have first used the transmission condition $(\star)$ satisfied by $u(t,x)$.

 Second we have used the fact that 
\begin{equation}
\label{eq:int-temps-lebeg-zero}
\forall 1\leq i\leq I,\quad \quad\int_0^T\1_{X_t=x_i(t)}dt=0\quad\P-\text{a.s.}
\end{equation}
so that for example ($\P$-a.s.)
\begin{equation}
\label{eq:phit}
e^{-\lambda(v-t)}u'_{t,\pm}(v,X_v)dv=\1_{\{X_v\neq x_i(v),\,\forall 1\leq i\leq I\}}e^{-\lambda(v-t)}u'_t(v,X_v)dv.
\end{equation}
To see that \eqref{eq:int-temps-lebeg-zero} holds for some $1\leq i\leq I$, one uses the same arguments as in the proof of  Proposition~\ref{prop:transfo}. Then to get 
\eqref{eq:phit} it suffices to notice that $\1_{\{X_t\neq x_i(t),\,\forall 1\leq i\leq I\}}=\prod_{i=1}^I\1_{X_t\neq x_i(t)}$ and that 
\eqref{eq:int-temps-lebeg-zero} implies $H(t)\1_{X_t\neq x_i(t)}dt=H(t)dt$ (a.s.) for any integrable process $H$ and any $1\leq i\leq I$.
By the same arguments one can see that 
$$
\begin{array}{l}
e^{-\lambda(v-t)}\big[ u'_{t,\pm}+bu'_{x,\pm}  -\lambda u \big](v,X_v)dv+e^{-\lambda(v-t)} (\frac{\sigma^2}{2}u''_{xx})(v,X_v)
\1_{\{X_v\neq x_i(v),1\leq i\leq I\}}dv\\
\\
=e^{-\lambda(v-t)}g(v,X_v)\1_{\{X_v\neq x_i(v),1\leq i\leq I\}}dv\\
\\
=e^{-\lambda(v-t)}g(v,X_v)dv.\\
\end{array}
$$
Therefore \eqref{eq:feyn1} holds.

We introduce the sequence of stopping times $(\tau_n)$ defined by
$\tau_n=\inf\{s\geq t: |X_s|\geq n\}$ for any $n\in\N$. Taking the expectation $\E^{t,x}(\cdot)$ of \eqref{eq:feyn1} with $s=(T-\delta)\wedge\tau_n$ 
($\delta>0$ is sufficiently small)
we get
\begin{equation}
\label{eq:feyn2}
\begin{array}{lll}
u(t,x)
&=&\E^{t,x}\Big[ u(T-\delta,X_{T-\delta})e^{-\lambda(T-\delta-t)}\,\1_{\tau_n>T-\delta}\Big]+\E^{t,x}\Big[ u(\tau_n,X_{\tau_n})e^{-\lambda(\tau_n-t)}\,\1_{\tau_n\leq T-\delta}\Big]\\
\\
&&-\E^{t,x}\Big[ \int_t^{(T-\delta)\wedge\tau_n}g(s,X_s)e^{-\lambda(s-t)}\,ds\,\Big].\\
\end{array}
\end{equation}

To conclude the proof we may show by dominated convergence that, as $n\to\infty$ and
$\delta\downarrow 0$ the quantity
 $\E^{t,x}\Big[ u(T-\delta,X_{T-\delta})e^{-\lambda(T-\delta-t)}\,\1_{\tau_n>T-\delta}\Big]$ converges to
$\E^{t,x}\Big[ f(X_T)e^{-\lambda(T-t)}\Big]$,
$\E^{t,x}\Big[ \int_t^{(T-\delta)\wedge\tau_n}g(s,X_s)e^{-\lambda(s-t)}\,ds\,\Big]$
converges to
$\E^{t,x}\Big[ \int_t^{T}g(s,X_s)e^{-\lambda(s-t)}\,ds\,\Big]$,
and finally
$\E^{t,x}\Big[ u(\tau_n,X_{\tau_n})e^{-\lambda(\tau_n-t)}\,\1_{\tau_n\leq T-\delta}\Big]$
converges to zero (we stress the fact that here, as $u$ is in $C_0(E)$ it is bounded; this is because we have chosen to deal in the parabolic problem with a terminal condition vanishing at infinity; this lightens some technical aspects of the proof of Theorem~5.7.6 in \cite{kara}).
\end{proof}

\section{Parabolic transmission problem with time-dependent coefficients}
\label{sec:EDP}

\subsection{Equivalent formulation in divergence like form and getting cylindrical subdomains by the mean of a space transform}
\label{ssec:eq-form}

Assume that we have curves $x_i$, $1\leq i\leq I$ satisfying the same assumptions as in Theorem \ref{thm-EDSTL}.
Let us consider coefficients $\rho\in\Theta(m',M')\cap C(E\setminus \Delta_{\bf x})$,
$a\in\Theta(m',M')\cap C^{0,1}(E\setminus \Delta_{\bf x})$,
 and a coefficient $B\in\Xi(M')\cap C(E\setminus \Delta_{\bf x})$ (for some $0<m'<M'<\infty$).

For $\lambda\geq 0$, a source  term  $g\in C^{}_c(E)$ and a terminal condition $f\in C_0(\R)\cap L^2(\R)$, we will call a classical solution 
of the transmission problem in divergence form
$(\mathcal{P}^\lambda_{\mathrm{div},\Delta_{\bf x}}(\rho,a,B))$, a function  $u(t,x)$ that is of class 
$C(E)\cap C^{1,2}(E^\circ\setminus\Delta_{{\bf x}})$, 
is such that for all $1\leq i\leq I$ the limits $u'_t(t,x_i(t)\pm)$, $u'_{x}(t,x_i(t)\pm)$ 
and $u''_{xx}(t,x_i(t)\pm)$ exist and are continuous as functions of $t\in [0,T)$,  and that satisfies
$$
(\mathcal{P}^\lambda_\mathrm{div,\Delta_{\bf x}}(\rho,a,B))
\left\{
\begin{array}{rcll}
\big[u'_t+\dfrac \rho 2\big(au'_x\big)'_x+B\,u'_x-\lambda u\big](t,x)&=&g(t,x)&\forall (t,x)\in E^\circ\setminus\Delta_{\bf x} \\
\\
a(t,x_i(t)+)u'_x(t,x_i(t)+) &=& a(t,x_i(t)-)u'_x(t,x_i(t)-)& \forall 1\leq i\leq I,\,\forall t\in[0,T)\;\;(\star) \\
\\
u(T,x)&=&f(x)&\forall x\in \R.\\
\\
\lim_{|x|\to\infty}|u(t,x)|&=&0&\forall t\in[0,T].
\end{array}
\right.
$$

For any $\rho,a,B$ with 
\begin{equation}
\label{eq:triplet}
\rho a =\sigma^2, \quad a(t,x_i(t)\pm)=p_i(t)(1\pm\beta_i(t)),\;\; \forall 1\leq i\leq I, \;\;\forall t\in[0,T)\quad \text{and }
B=b-\rho\,a'_{x,\pm}/2,
\end{equation}
 it is clear that a classical solution to $(\mathcal{P}^\lambda_\mathrm{div,\Delta_{\bf x}}(\rho,a,B))$ is a classical solution to 
$(\cP^\lambda_{\Delta_{\bf x}}(\sigma,b,\beta))$ (here  $p_i(t)$ is a non zero multiplicative factor that depends on 
$1\leq i\leq I$, $t\in[0,T)$). One may for example choose
for any~$(t,x)\in E$
\begin{equation}
\label{eq:defi-arhoB}
a(t,x)=\prod_{x_i(t)\leq x}\frac{1+\beta_i(t)}{1-\beta_i(t)},\quad
\rho(t,x)=\dfrac{\sigma^2(t,x)}{a(t,x)},\quad
B(t,x)=b(t,x)
\end{equation}
(Note that here $p_i(t)=\dfrac{1}{1-\beta_i(t)}\,\prod_{j<i}\dfrac{1+\beta_j(t)}{1-\beta_j(t)}$).
Note that the presence of the variable coefficient $\rho(t,x)$ is due to the fact that the coefficient $\sigma(t,x)$ has been chosen independently from the $\beta_i(t)$'s.
Note also that a convenient triple $(\rho,a,B)$ is not unique - indeed if $(\rho,a,B)$ satisfies \eqref{eq:triplet}, for any $c>0$ the triplet
$(c\rho,a/c,B)$ will also do, with of course different multiplicative factors $p_i(t)$.

Conversely, it is always possible to pass from a transmission problem in the form $(\mathcal{P}^\lambda_\mathrm{div,\Delta_{\bf x}}(\rho,a,B))$  to another one in the form 
$(\cP^\lambda_{\Delta_{\bf x}}(\sigma,b,\beta))$, by setting in particular 
\begin{equation}
\label{eq:betai-poura}
\beta_i(t)=\frac{a(t,x_i(t)+)-a(t,x_i(t)-)}{a(t,x_i(t)+)+a(t,x_i(t)-)}.
\end{equation}

In fact, in the PDE litterature, parabolic transmission problems are classically studied in the purely divergence-like form  of
$(\mathcal{P}^\lambda_\mathrm{div,\Delta_{\bf x}}(\rho\equiv 1,a,B))$. Up to our knowledge fewer studies exist in the non divergence form $(\cP^\lambda_{\Delta_{\bf x}}(\sigma,b,\beta))$. 
The aim of this section
is to present some known results on the problem\\
 $(\mathcal{P}^\lambda_\mathrm{div,\Delta}(\rho\equiv 1,a,B))$, 
and to derive new ones for the general case ($\rho\neq 1$). So that we will finally get results for the problem $(\cP^\lambda_{\Delta_{\bf x}}(\sigma,b,\beta))$ (see Theorem \ref{thm:sol-classique} in the conclusion of this section).
\vspace{0.2cm}

In the case $\rho\equiv 1$, the transmission problem in divergence form $(\mathcal{P}^\lambda_\mathrm{div,\Delta_{\bf x}}(\rho,a,B))$  is well studied in the PDE litterature, concerning the existence and uniqueness of weak solutions (see the forthcoming Subsection~\ref{ss:sol-faible} for a definition of weak solution). We can refer for instance to \cite{lady}, \cite{lions-magenes}, \cite{lieberman}, for the study of weak solutions under the general assumption of uniform ellipticity and boundedness of the coefficient $a(t,x)$, boundedness of $B(t,x)$ and non-negativity of $\lambda$. 

Concerning classical solutions in the presence of a discontinuous coefficient $a(t,x)$ like in our case, it seems that less references are available. In  the fundamental paper \cite{lady1} it is shown that, still with $\rho\equiv 1$, and in the case of cylindrical space-time subdomains  (that is to say 
$x_i(t)=x_i$ for all $1\leq i\leq I$, $0\leq t\leq T$) every weak solution to $(\mathcal{P}^\lambda_\mathrm{div,\Delta_{\bf x}}(\rho\equiv 1,a,B))$ is in fact classical. As a consequence there exists a classical solution to $(\mathcal{P}^\lambda_\mathrm{div,\Delta_{\bf x}}(\rho\equiv 1,a,B))$. 

In the case $\rho\neq 1$ and in the presence of non-cylindrical subdomains some results are announced in \cite{lady1} and \cite{lady}. However they are stated without any
complete proof (with the notable exception of the proof of the existence of a unique weak solution in the case of cylindrical subdomains, but with $\rho\neq 1$, pp 229-232 of \cite{lady}; see Subsection~\ref{ss:sol-faible} for further comments).

\vspace{0.2cm}

We continue this subsection by noticing that in fact we can get rid of the difficulty of having non-cylindrical subdomains, by applying a space transform trick, available only because the space dimension is one. We choose to present things on the problem in its non-divergence form 
$(\cP^\lambda_{\Delta_{\bf x}}(\sigma,b,\beta))$ again.

From now on we assume 
$I\geq 3$ and set 
$$
\forall (t,\hat{x})\in E,\quad\psi(t,\hat{x})=\left\{
\begin{array}{lll}
x_1(t)+(x_2(t)-x_1(t))(\hat{x}-1) &\text{if} &\hat{x}<1\\
\\
 x_j(t)+(x_{j+1}(t)-x_j(t))(\hat{x}-j) &\text{if} &j\leq \hat{x}<j+1,\;j=1,\ldots,I-2\\
 \\
 x_{I-1}(t)+(x_{I}(t)-x_{I-1}(t))(\hat{x}-I+1) &\text{if} &\hat{x}\geq I-1\\
\end{array}
\right.
$$
For any $t\in[0,T]$ we note $\Psi(t,\cdot)= [\psi(t,\cdot)]^{-1}(\cdot)$. Notice that 
$$\Delta=\Psi(\Delta_{\bf x})$$
and that $E\setminus\Delta$ appears as the union of some open cylindrical space-time domains.

 We have the following result.

\begin{proposition} 
\label{prop:redress}
A function $u(t,x)$ is a classical solution to $(\cP^\lambda_{\Delta_{\bf x}})(\sigma,b,\beta)$ if and only if $\hat{u}(t,\hat{x}):= u(t,\psi(t,\hat{x}))$ is a classical solution 
to
$$
((\hat{\cP}^\lambda_{\Delta})(\hat{\sigma},\hat{b},\hat{\beta}))
\left\{
\begin{array}{rcll}
\big[\hat{u}'_t+\frac 1 2\hat{\sigma}^2\hat{u}''_{\hat{x}\hat{x}}+\hat{b}\,\hat{u}'_{\hat{x}}-\lambda \hat{u}\big](t,\hat{x})&=&\hat{g}(t,\hat{x})&\forall (t,\hat{x})\in E\setminus\Delta \\
\\
(1+\hat{\beta}_i(t))\hat{u}'_{\hat{x}}(t,i+) &=& (1-\hat{\beta}_i(t))\hat{u}'_{\hat{x}}(t,i-)& \forall 1\leq i\leq I,\,\forall t\in[0,T)\;\;(\hat{\star}) \\
\\
\hat{u}(T,\hat{x})&=&\hat{f}(\hat{x})&\forall \hat{x}\in \R.\\
\\
\lim_{|\hat{x}|\to\infty}|\hat{u}(t,\hat{x})|&=&0&\forall t\in[0,T],
\end{array}
\right.
$$
where 
\begin{equation}
\label{eq:defisigchapeau}
\hat{\sigma}(t,\hat{x})=\sigma(t,\psi(t,\hat{x}))\times\Psi'_{x,\pm}(t,\psi(t,\hat{x})),\quad\hat{b}(t,\hat{x})=b(t,\psi(t,\hat{x}))\times\Psi'_{x,\pm}(t,\psi(t,\hat{x}))+\Psi'_{t,\pm}(t,\psi(t,\hat{x})),
\end{equation}

\noindent
$\hat{g}(t,\hat{x})=g(t,\psi(t,\hat{x}))$, $\hat{f}(\hat{x})=f(\psi(T,\hat{x}))$ and
\begin{equation}
\label{eq:defibetachapeau}
\hat{\beta}_i(t)=\frac{ (1+\beta_i(t))\Psi'_x(t,x_i(t)+) - (1-\beta_i(t))\Psi'_x(t,x_i(t)-) }  { (1+\beta_i(t))\Psi'_x(t,x_i(t)+) + (1-\beta_i(t))\Psi'_x(t,x_i(t)-)   }.
 \end{equation}

\end{proposition}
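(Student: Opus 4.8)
The plan is to deduce everything from the elementary structure of the map $\psi$. First I would record that, for each fixed $t\in[0,T]$, $\psi(t,\cdot)$ is a continuous, strictly increasing, piecewise affine bijection of $\R$ onto itself: it is affine with strictly positive slope on each of the intervals $(-\infty,1),(1,2),\dots,(I-1,I),(I,\infty)$, it is $C^1$ in $t$ on each of these pieces (the $x_i$ being $C^1$), and it satisfies $\psi(t,i)=x_i(t)$. Hence $\psi''_{\hat x\hat x}\equiv 0$, the restriction of $\psi$ to each closed piece is $C^1$ in $t$ and affine (hence smooth) in $\hat x$ up to the boundary of the piece, and the one-sided boundary values $\psi'_{\hat x}(t,i\pm)$ and $\psi'_t(t,i\pm)$ are continuous in $t$; moreover $\psi$ carries the \emph{cylindrical} decomposition of $E\setminus\Delta$ onto the decomposition of $E\setminus\Delta_{\bf x}$ into the domains $D^x_i$, sending the interface $\hat x=i$ onto the curve $t\mapsto(t,x_i(t))$. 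The inverse $\Psi(t,\cdot)$ has the same structure, with $\Psi'_x(t,\psi(t,\hat x))=1/\psi'_{\hat x}(t,\hat x)$; differentiating the identity $\Psi(t,\psi(t,\hat x))=\hat x$ in $t$ also gives $\Psi'_t(t,\psi(t,\hat x))=-\psi'_t(t,\hat x)/\psi'_{\hat x}(t,\hat x)$, which will be needed below. Since $\psi$ and $\Psi$ play symmetric roles it suffices to prove one implication, say that if $u$ is a classical solution of $(\cP^\lambda_{\Delta_{\bf x}}(\sigma,b,\beta))$ then $\hat u(t,\hat x):=u(t,\psi(t,\hat x))$ is a classical solution of $(\hat\cP^\lambda_\Delta(\hat\sigma,\hat b,\hat\beta))$; the converse then follows by rerunning the argument with $\Psi$ in place of $\psi$.

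The second step is to transfer the regularity. Continuity of $\hat u$ on $E$ is immediate because $\psi$ is a homeomorphism of $E$. On each open cylindrical piece, $\psi$ is $C^1$ in $t$ and $C^\infty$ in $\hat x$ with $\psi'_{\hat x}>0$, so the chain rule gives $\hat u\in C^{1,2}(E^\circ\setminus\Delta)$ together with
\[
\hat u'_{\hat x}=u'_x(t,\psi)\,\psi'_{\hat x},\qquad \hat u'_t=u'_t(t,\psi)+u'_x(t,\psi)\,\psi'_t,\qquad \hat u''_{\hat x\hat x}=u''_{xx}(t,\psi)\,(\psi'_{\hat x})^2,
\]
the last identity using $\psi''_{\hat x\hat x}=0$. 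Letting $\hat x\to i\pm$ in these formulas and using that $\psi(t,\hat x)\to x_i(t)$ from the correct side while $\psi'_{\hat x}$ and $\psi'_t$ tend to their $t$-continuous one-sided boundary values, I would conclude that $\hat u'_t(t,i\pm)$, $\hat u'_{\hat x}(t,i\pm)$, $\hat u''_{\hat x\hat x}(t,i\pm)$ exist, are continuous in $t\in[0,T)$, and satisfy $\hat u'_{\hat x}(t,i\pm)=u'_x(t,x_i(t)\pm)\,\psi'_{\hat x}(t,i\pm)$, and similarly for the other two. Thus $\hat u$ has exactly the regularity required of a classical solution of $(\hat\cP^\lambda_\Delta)$.

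The third step is to substitute these formulas into the equation and the transmission condition. Collecting the coefficients of $u''_{xx}(t,\psi)$, $u'_x(t,\psi)$ and $u(t,\psi)$ yields
\[
\big[\hat u'_t+\tfrac12\hat\sigma^2\hat u''_{\hat x\hat x}+\hat b\,\hat u'_{\hat x}-\lambda\hat u\big](t,\hat x)=\big[u'_t+\tfrac12\sigma^2u''_{xx}+b\,u'_x-\lambda u\big](t,\psi(t,\hat x)),
\]
precisely because $\hat\sigma=\sigma(t,\psi)/\psi'_{\hat x}$ matches the second-order coefficient, $\hat b=(b(t,\psi)-\psi'_t)/\psi'_{\hat x}$ the first-order one, and $\hat u=u(t,\psi)$; rewriting $1/\psi'_{\hat x}=\Psi'_x(t,\psi)$ and $-\psi'_t/\psi'_{\hat x}=\Psi'_t(t,\psi)$ identifies $\hat\sigma$ and $\hat b$ with \eqref{eq:defisigchapeau} (off the interfaces the symmetrized values $\Psi'_{x,\pm},\Psi'_{t,\pm}$ coincide with the genuine derivatives). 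Since $\hat g(t,\hat x)=g(t,\psi(t,\hat x))$ and $\psi$ is a bijection of $E\setminus\Delta$ onto $E\setminus\Delta_{\bf x}$, the equation for $\hat u$ holds iff that for $u$ does. For the transmission condition I would divide $\hat u'_{\hat x}(t,i+)=u'_x(t,x_i(t)+)\psi'_{\hat x}(t,i+)$ by its analogue at $i-$ and use $\psi'_{\hat x}(t,i\pm)=1/\Psi'_x(t,x_i(t)\pm)>0$ to get
\[
\frac{\hat u'_{\hat x}(t,i+)}{\hat u'_{\hat x}(t,i-)}=\frac{u'_x(t,x_i(t)+)}{u'_x(t,x_i(t)-)}\cdot\frac{\Psi'_x(t,x_i(t)-)}{\Psi'_x(t,x_i(t)+)};
\]
condition $(\star)$ for $u$ says the first factor equals $(1-\beta_i(t))/(1+\beta_i(t))$, and a short computation from \eqref{eq:defibetachapeau} shows $(1-\hat\beta_i(t))/(1+\hat\beta_i(t))$ equals the whole right-hand side, so $(\hat{\star})$ holds, and conversely. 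Here one also notes that positivity of $\Psi'_x(t,x_i(t)\pm)$ together with $\beta_i(t)\in(-1,1)$ forces $\hat\beta_i(t)\in(-1,1)$, and that at $\hat x=1$ and $\hat x=I$ there is no slope jump, so $\hat\beta_1=\beta_1$ and $\hat\beta_I=\beta_I$. Finally the terminal and decay conditions transfer directly: $\hat u(T,\hat x)=u(T,\psi(T,\hat x))=f(\psi(T,\hat x))=\hat f(\hat x)$, and $|\psi(t,\hat x)|\to\infty$ as $|\hat x|\to\infty$.

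I expect the main obstacle to be the bookkeeping at the \emph{moving} interfaces rather than any single estimate: one has to check that the chain rule remains valid up to the boundary of each cylindrical piece, that the one-sided limits compose in the right way, and that the $\pm$ conventions in $\hat\sigma$, $\hat b$ and $\hat\beta_i$ stay consistent throughout. The identity $\Psi'_t(t,\psi)=-\psi'_t/\psi'_{\hat x}$, which is exactly what produces the additive term $\Psi'_{t,\pm}$ in the formula for $\hat b$ in \eqref{eq:defisigchapeau}, is the one step that is not purely mechanical.
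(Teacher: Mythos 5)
Your proposal is correct and follows essentially the same route as the paper: exploit that the space transform is piecewise affine (so its second space derivative vanishes), apply the chain rule, read off $\hat\sigma$ and $\hat b$ from the transformed first line, and convert the transmission condition via the one-sided derivative relations, proving one implication and noting the converse is symmetric. The only cosmetic difference is that you differentiate $\hat u=u\circ\psi$ where the paper differentiates $u=\hat u\circ\Psi$, and you are somewhat more explicit about the transfer of one-sided regularity up to the interfaces, which the paper leaves implicit.
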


\begin{remark}
\label{rem:Psi}
Note that
\begin{equation}
\label{eq:defi-Psi}
\forall (t,x)\in E,\quad \Psi(t,x)=\left\{
\begin{array}{lll}
(x-x_1(t))/(x_2(t)-x_1(t))+1 &\text{if} &x<x_1(t)\\
\\
 (x-x_j(t))/(x_{j+1}(t)-x_j(t))+j &\text{if} &x_{j}(t)\leq x<x_{j+1}(t),\;j=1,\ldots,I-2\\
 \\
(x-x_{I-1}(t))/(x_{I}(t)-x_{I-1}(t))+I-1 &\text{if} &x\geq x_{I-1}(t)\\
\end{array}
\right.
\end{equation}
and that this function is of class $C(E)\cap C^{1,2}(E\setminus\Delta_{\bf x})$. Besides,  choosing $\varepsilon<\inf_{1\leq j\leq I-1}\inf_{s\in[0,T]}(x_{j+1}(s)-x_j(s))$ and
using the fact that $\varepsilon<x_{j+1}(t)-x_j(t)\leq\sup_{s\in[0,T]}(x_{j+1}(s)-x_j(s))$ we can see that there exist constants $0<\hat{m}<\hat{M}<\infty$ such that 
$\Psi'_{x,\pm}\in\Theta(\hat{m},\hat{M})$. In addition $\Psi'_{t,\pm}$ remains bounded (thanks in particular to the fact
that the $x_i:[0,T]\to\R$, $1\leq i\leq I$ are of class $C^1$). Thus the coefficients $\hat{\sigma}(t,\hat{x})$,
$\hat{b}(t,\hat{x})$ and $\hat{\beta_i}(t)$, $1\leq i\leq I$, still satisfy the hypotheses of Section \ref{sec:feynman}.

\end{remark}

\begin{proof}[Proof of Proposition \ref{prop:redress}]
We only prove the sufficient condition, the converse being proved in the same manner.

First for any $(t,x)\in E\setminus\Delta_{\bf x}$ we have 
\begin{equation}
\label{eq:uprime-Psi}
u'_x(t,x)=\hat{u}'_{\hat{x}}(t,\Psi(t,x))\times\Psi'_x(t,x), 
\end{equation}
and, as $\Psi''_{xx}(t,x)=0$,
\begin{equation}
\label{eq:usec-Psi}
u''_{xx}(t,x)=\hat{u}''_{\hat{x}\hat{x}}(t,\Psi(t,x))\times[\Psi'_x(t,x)]^2.
\end{equation}
We also have
\begin{equation}
\label{eq:ut}
u'_t(t,x)=\hat{u}'_t(t,\Psi(t,x))+\hat{u}'_{\hat{x}}(t,\Psi(t,x))\times\Psi'_t(t,x).
\end{equation}

So that for any $(t,\hat{x})\in E\setminus\Delta$ we may use this with $(t,x)=(t,\psi(t,\hat{x}))$ in the first line of 
$(\cP^\lambda_{\Delta_{\bf x}}(\sigma,b,\beta))$ and thus we get the first line of $(\hat{\cP}^\lambda_{\Delta}(\hat{\sigma},\hat{b},\hat{\beta}))$, with the newly defined coefficients $\hat{\sigma}$, $\hat{b}$ and 
$\hat{g}$.

Concerning the transmission condition $(\hat{\star})$, we notice that we have from $(\star)$ in $(\cP^\lambda_{\Delta_{\bf x}}(\sigma,b,\beta))$
$$
\forall t\in[0,T],\quad (1+\beta_i(t))\Psi'_x(t,x_i(t)+)\hat{u}'_{\hat{x}}(t,\Psi(t,x_i(t))+)
=(1-\beta_i(t))\Psi'_x(t,x_i(t)-)\hat{u}'_{\hat{x}}(t,\Psi(t,x_i(t))-)$$
for any $1\leq i\leq I$. As $\Psi(t,x_i(t))=i$ for any $1\leq i\leq I$, an easy computation shows that this is equivalent to $(\hat{\star})$,
with the newly defined $\hat{\beta}_i(t)$, $1\leq i\leq I$. 

The third and fourth lines of $(\hat{\cP}^\lambda_{\Delta}(\hat{\sigma},\hat{b},\hat{\beta}))$ are straightforward.
\end{proof}

\vspace{0.2cm}

We can sum up the preceding discussions in the following proposition.

\begin{proposition}
\label{prop:sum-up}
Assume the curves $x_i$,  and the coefficients $\beta_i$, $1\leq i\leq I$, are as in Theorem~\ref{thm-EDSTL}, and that~$b$ is in 
$\Xi(M)\cap C(E\setminus\Delta_{\bf x})$, and $\sigma$ is in $\Theta(m,M)\cap C(E\setminus\Delta_{\bf x})$.

 Let
$\hat{\sigma}$, $\hat{b}$, $\hat{\beta}_i$, $1\leq i\leq I$, defined by \eqref{eq:defisigchapeau} \eqref{eq:defibetachapeau}.  Let $\hat{\rho},\hat{a}$, $\hat{B}$ be defined by
\eqref{eq:defi-arhoB}, but with $\hat{\sigma}$, $\hat{b}$, $\hat{\beta}_i$, $1\leq i\leq I$ instead of
$\sigma$, $b$, $\beta_i$, $1\leq i\leq I$.

Then $(\cP^\lambda_{\Delta_{\bf x}}(\sigma,b,\beta))$ has a classical solution if and only if $(\mathcal{P}^\lambda_\mathrm{div,\Delta}(\hat{\rho},\hat{a},\hat{B}))$ 
has a classical solution $\hat{u}(t,\hat{x})$. This classical solution of $(\cP^\lambda_{\Delta_{\bf x}}(\sigma,b,\beta))$ is given by $u(t,x)=\hat{u}(t,\Psi(t,x))$ with $\Psi(t,x)$
defined by \eqref{eq:defi-Psi}.
\end{proposition}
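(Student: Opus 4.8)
The plan is to compose two equivalences already available in this section: the space-straightening transform of Proposition~\ref{prop:redress}, which turns the moving interfaces $\Delta_{\bf x}$ into the cylindrical set $\Delta$, and the identification of the non-divergence form with the divergence form recorded in the discussion around \eqref{eq:triplet}--\eqref{eq:betai-poura}.

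First I would invoke Proposition~\ref{prop:redress}. Its hypotheses are in force here: $I\geq 3$ is the standing assumption of this subsection, the $x_i$ are $C^1$ and strictly ordered, and Remark~\ref{rem:Psi} guarantees that the transformed data $\hat\sigma$, $\hat b$, $\hat\beta_i$ defined by \eqref{eq:defisigchapeau}--\eqref{eq:defibetachapeau} again satisfy the hypotheses of Section~\ref{sec:feynman} (in particular each $\hat\beta_i$ stays in $(-1,1)$ and is of class $C^1$, so none of the denominators occurring below degenerates). Proposition~\ref{prop:redress} then yields: $u$ is a classical solution of $(\cP^\lambda_{\Delta_{\bf x}}(\sigma,b,\beta))$ if and only if $\hat u(t,\hat x):=u(t,\psi(t,\hat x))$ is a classical solution of $(\hat{\cP}^\lambda_{\Delta}(\hat\sigma,\hat b,\hat\beta))$; since $\Psi(t,\cdot)=[\psi(t,\cdot)]^{-1}$, this relation reads $u(t,x)=\hat u(t,\Psi(t,x))$ with $\Psi$ given by \eqref{eq:defi-Psi}.

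Second I would identify $(\hat{\cP}^\lambda_{\Delta}(\hat\sigma,\hat b,\hat\beta))$ with the divergence-form problem $(\mathcal{P}^\lambda_{\mathrm{div},\Delta}(\hat\rho,\hat a,\hat B))$, where $(\hat\rho,\hat a,\hat B)$ is the triple built from $(\hat\sigma,\hat b,\hat\beta)$ via \eqref{eq:defi-arhoB}. This is exactly the verification carried out earlier in this subsection, applied with hats. The two problems share verbatim the regularity class defining a classical solution, so it suffices to match the interior equation and the transmission conditions. For a function in that class one has, off $\Delta$, the identity $(\hat a\hat u'_{\hat x})'_{\hat x}=\hat a'_{\hat x}\hat u'_{\hat x}+\hat a\,\hat u''_{\hat x\hat x}$; since \eqref{eq:defi-arhoB} makes $\hat a(t,\cdot)$ locally constant in space, so that $\hat a'_{\hat x}\equiv 0$ off $\Delta$, and since $\hat\rho\hat a=\hat\sigma^2$ and $\hat B=\hat b$, the term $\frac{\hat\rho}{2}(\hat a\hat u'_{\hat x})'_{\hat x}$ reduces to $\frac{\hat\sigma^2}{2}\hat u''_{\hat x\hat x}$ and the two interior equations coincide; the transmission conditions coincide because \eqref{eq:defi-arhoB} gives $\hat a(t,i+)/\hat a(t,i-)=(1+\hat\beta_i(t))/(1-\hat\beta_i(t))$, i.e. the compatibility relation \eqref{eq:triplet} holds. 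One also records, using Remark~\ref{rem:Psi}, that $\hat\rho$ is bounded, uniformly positive and continuous on $E\setminus\Delta$, that $\hat a$ is bounded, uniformly positive and belongs to $C^{0,1}(E\setminus\Delta)$, and that $\hat B=\hat b$ is bounded and continuous on $E\setminus\Delta$, so that $(\mathcal{P}^\lambda_{\mathrm{div},\Delta}(\hat\rho,\hat a,\hat B))$ is an admissible divergence-form transmission problem. Chaining this with the first step gives the asserted equivalence, and the displayed formula $u(t,x)=\hat u(t,\Psi(t,x))$ is precisely the one produced by Proposition~\ref{prop:redress}.

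I do not expect a genuine obstacle: the delicate analytic bookkeeping — that $\psi$ and $\Psi$ are regular enough for composition to preserve the class $C(E)\cap C^{1,2}(E^\circ\setminus\Delta_{\bf x})$ together with the one-sided limit conditions at the interfaces — has already been isolated in Proposition~\ref{prop:redress} and Remark~\ref{rem:Psi}. The one point requiring a moment's care is the consistency of $\hat B=\hat b$ with the relation $B=b-\rho\,a'_{x,\pm}/2$ appearing in \eqref{eq:triplet}; this is exactly why the particular normalization \eqref{eq:defi-arhoB}, which renders $\hat a$ locally constant in the space variable, is the convenient choice.
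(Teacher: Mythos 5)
Your proposal is correct and follows exactly the route the paper intends: the paper states Proposition~\ref{prop:sum-up} as a summary of "the preceding discussions," i.e.\ the composition of Proposition~\ref{prop:redress} (straightening the interfaces via $\psi$, $\Psi$) with the divergence/non-divergence identification provided by the choice \eqref{eq:defi-arhoB}, under which $\hat a$ is locally constant in space so that the interior equations and the transmission conditions match verbatim. Your added checks (non-degeneracy of $\hat\beta_i$, $\hat a\in C^{0,1}(E\setminus\Delta)$, consistency of $\hat B=\hat b$ with \eqref{eq:triplet}) are exactly the points Remark~\ref{rem:Psi} and the discussion around \eqref{eq:defi-arhoB} are meant to cover.
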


Without loss of generality we shall investigate the problem $(\cP^\lambda_{\mathrm{div},\Delta}(\rho,a,B))$ (i.e. with $x_i\equiv i$, $1\leq i\leq I$): in Subsection
\ref{ss:sol-faible} we deal with weak solutions, and in Subsection \ref{ssec:solclassique} with classical solutions but in the case $\rho\equiv 1$ (we sum up the results of
\cite{lady1}).  In Subsection~\ref{ssec:solclassique-transfo} we present a way to get classical solutions in the case $\rho\neq 1$, using the results of Subsection \ref{ssec:solclassique}, and again (different) space transform techniques.

\subsection{Weak solutions}
\label{ss:sol-faible}

In this subsection it is assumed $\rho,a\in\Theta(m',M')$ and $B\in\Xi(M')$ for some $0<m'<M'<\infty$, and that the coefficient  $\rho$ satisfies the $\mathbf{H}^{(t)}$-hypothesis.

\vspace{0.1cm}

We will call a weak solution of the parabolic problem $(\cP^\lambda_{\mathrm{div},\Delta}(\rho,a,B))$ a function $u(t,x)$ in the space
$L^2(0,T;H^1(\R))\cap C([0,T];L^2(\R))$, with $u(T,\cdot)=f$ a.e., and satisfying for any test function $\varphi\in H^{1,1}_{0}(E)$ the relation
\begin{equation}
\label{eq:sol-faible}
\begin{array}{l}
\ds\int_0^T\int_\R u \frac{\mathrm{d}\varphi}{\mathrm{dt}}\rho^{-1}\,dxdt \\
\\
\ds+ \frac 1 2 \int_0^T\int_\R a \frac{ \mathrm{d}u}{\mathrm{dx}}  \frac{ \mathrm{d}\varphi}{\mathrm{dx}}  \,dxdt
-\int_0^T\int_\R B \frac{ \mathrm{d}u}{\mathrm{dx}}\varphi\rho^{-1}dxdt
+\int_0^T\int_\R u(\lambda-\frac{\rho'_t}{\rho})\varphi\rho^{-1}dxdt=-\int_0^T\int_\R g\varphi\rho^{-1}\,dxdt.\\
\end{array}
\end{equation}
Indeed, imagine for a while that we have a classical solution $u(t,x)$ of $(\cP^\lambda_{\mathrm{div},\Delta_{}}(\rho,a,B))$. If we formally multiply the first line of $(\cP^\lambda_{\mathrm{div},\Delta}(\rho,a,B))$ by a test function $\varphi$ vanishing at infinity and with 
$\varphi(0,\cdot)=\varphi(T,\cdot)=0$, and integrate the resulting equation against $\rho^{-1}dxdt$ on $[0,T]\times\R$ we recover 
\eqref{eq:sol-faible}, using in particular~$(\star)$ in the integration by parts formula.

\vspace{0.2cm}

We first aim at proving the following result.

\begin{proposition}
\label{prop:sol-faible}
The parabolic problem $(\cP^\lambda_{\mathrm{div},\Delta}(\rho,a,B))$ has a unique weak solution.
\end{proposition}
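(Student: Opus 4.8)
The plan is to recast \eqref{eq:sol-faible} as an abstract parabolic problem in the Lions–Magenes framework and invoke a Lions-type existence theorem, essentially following the argument for the cylindrical case in \cite{lions-magenes} (and the variant with $\rho\neq1$ sketched in \cite{lady}). First I would introduce the time-dependent bilinear form
$$
\mathfrak{a}(t;u,v)=\frac12\int_\R a(t,\cdot)\,\frac{\mathrm{d}u}{\mathrm{dx}}\,\frac{\mathrm{d}v}{\mathrm{dx}}\,dx-\int_\R B(t,\cdot)\,\frac{\mathrm{d}u}{\mathrm{dx}}\,v\,\rho^{-1}(t,\cdot)\,dx+\int_\R u\Big(\lambda-\frac{\rho'_t}{\rho}\Big)v\,\rho^{-1}(t,\cdot)\,dx
$$
on $V=H^1(\R)$, with pivot space $H$ being $L^2(\R)$ equipped with the weighted inner product $\langle u,v\rangle_\rho=\int_\R uv\,\rho^{-1}\,dx$, which is equivalent to the standard one since $\rho\in\Theta(m',M')$. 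Because $\rho$ satisfies the $\mathbf H^{(t)}$-hypothesis, $t\mapsto\mathfrak{a}(t;\cdot,\cdot)$ is measurable with coefficients bounded uniformly in $t$, so $|\mathfrak{a}(t;u,v)|\leq C\|u\|_V\|v\|_V$. The key analytic estimate is a Gårding inequality: using $m'\leq a$, the bound on $B$, and Young's inequality to absorb the first-order term, one gets $\mathfrak{a}(t;u,u)\geq \delta\|u\|_V^2-C_0\|u\|_H^2$ for constants $\delta>0$, $C_0\geq0$ independent of $t$. Since \eqref{eq:sol-faible} is posed with a terminal (not initial) condition, I would reverse time, setting $\tilde u(t,x)=u(T-t,x)$, so that the problem becomes the standard forward Cauchy problem $\frac{d}{dt}\langle\tilde u(t),v\rangle_\rho+\mathfrak{a}(T-t;\tilde u(t),v)=-\langle g(T-t),v\rangle_\rho$ for all $v\in V$, with $\tilde u(0,\cdot)=f$.

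Existence then follows from the classical Lions theorem (see \cite{lions-magenes}, also \cite{lieberman}): for $f\in H$ and right-hand side in $L^2(0,T;V')$ — here $g\in C_c(E)\subset L^2(0,T;L^2(\R))\subset L^2(0,T;V')$ — there is a unique $\tilde u\in L^2(0,T;V)\cap C([0,T];H)$ with $\frac{d\tilde u}{dt}\in L^2(0,T;V')$ solving the equation, the Gårding inequality supplying coercivity after the standard exponential shift $\tilde u\mapsto e^{-C_0 t}\tilde u$. Undoing the time reversal gives a weak solution $u$ of $(\cP^\lambda_{\mathrm{div},\Delta}(\rho,a,B))$ in $L^2(0,T;H^1(\R))\cap C([0,T];L^2(\R))$ with $u(T,\cdot)=f$. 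I should check that the regularity $\frac{d\tilde u}{dt}\in L^2(0,T;V')$ plus $\tilde u\in L^2(0,T;V)$ yields, after integration by parts in time against $\varphi\in H^{1,1}_0(E)$ (for which $\varphi(0,\cdot)=\varphi(T,\cdot)=0$), exactly the integral identity \eqref{eq:sol-faible} — this is the Lions–Magenes integration-by-parts lemma for the space $W(0,T)=\{w\in L^2(0,T;V):w'\in L^2(0,T;V')\}$, and the boundary terms vanish precisely because of the vanishing of $\varphi$ at $t=0,T$ and the terminal value $u(T,\cdot)=f$.

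Uniqueness is the easier half: if $u_1,u_2$ are two weak solutions, their difference $w=u_1-u_2$ satisfies \eqref{eq:sol-faible} with $g=0$ and $w(T,\cdot)=0$. After time reversal $w$ lies in $W(0,T)$, so one is entitled to take $\varphi$-approximations of $w$ itself as a test function (a density argument, since $C^{\infty,\infty}_{c,c}(E)$ is dense in $H^{1,1}_0(E)$ and $W(0,T)$-functions can be tested against themselves by the same integration-by-parts lemma), yielding $\frac12\frac{d}{dt}\|w(t)\|_\rho^2+\mathfrak{a}(t;w(t),w(t))=0$; the Gårding inequality and Gronwall then force $w\equiv0$ from $w(0)=0$.

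The main obstacle I anticipate is bookkeeping around the weight $\rho^{-1}$ and the extra zeroth-order term $-\rho'_t/\rho$: one must be careful that the pivot-space inner product is the $\rho$-weighted one so that the time-derivative term in \eqref{eq:sol-faible} really is $\frac{d}{dt}\langle u,\varphi\rangle_\rho$, and that the $\rho'_t/\rho$ term is genuinely an $L^\infty$ zeroth-order perturbation (this is exactly where the $\mathbf H^{(t)}$-hypothesis on $\rho$ is used, guaranteeing $\rho'_t\in L^\infty(E\setminus\Delta)$ and hence $\rho'_t/\rho$ bounded). Once these identifications are in place, the transmission condition $(\star)$ never appears explicitly in the weak formulation — it is automatically encoded in the requirement $u\in L^2(0,T;H^1(\R))$ together with the divergence-form weak identity — so no separate argument for it is needed at the level of weak solutions.
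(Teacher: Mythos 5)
Your argument is correct in substance but follows a genuinely different route through the Lions--Magenes toolbox than the paper does. You invoke the classical Lions theorem for a time-dependent family of forms $\mathfrak{a}(t;\cdot,\cdot)$ on a fixed Gelfand triple $V\subset H\subset V'$ (plus time reversal, exponential shift, and a G\aa rding inequality), whereas the paper works entirely in space-time: it sets up the weighted spaces $\cH=L^2(0,T;L^2(\R);\rho^{-1})$ and $\cV=L^2(0,T;H^1(\R);\rho^{-1})$, treats $\mathrm{d}/\mathrm{d}t$ as the generator $\Lambda$ of the translation semigroup on $\cV'$ (Lemma \ref{lem:geneU}), and applies the isomorphism theorem for $-\Lambda-A_{\lambda_0}$ (Theorem \ref{thm:gene-lions}, a generalization of Theorem 3.4.1 in \cite{lions-magenes}), with the continuity and coercivity of the space-time form $\cA$ supplied by Lemma \ref{lem:Acontcoer}; uniqueness there comes for free from the isomorphism rather than from your Gronwall argument. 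Both approaches hinge on the same G\aa rding estimate and the same cancellation of the $\rho'_t/\rho$ term under time integration by parts, and your energy-method uniqueness is perfectly sound. The one point you should nail down before relying on the ``classical Lions theorem'' verbatim: your pivot inner product $\langle u,v\rangle_\rho=\int uv\,\rho^{-1}(t,\cdot)\,dx$ is \emph{time-dependent}, which takes you outside the hypotheses of the standard statement (fixed $H$). You must either redo the Galerkin argument allowing a family of uniformly equivalent inner products whose time derivative is bounded (this is exactly what the $\mathbf{H}^{(t)}$-hypothesis on $\rho$ provides, and is where your $-\rho'_t/\rho$ zeroth-order term earns its keep), or reduce to a fixed pivot. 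The paper's space-time formulation sidesteps this issue entirely, since the weight is absorbed into a single fixed inner product on $\cH$; it also, as the authors note in Remark \ref{rem:dir}, sets up the machinery closer to generalized Dirichlet forms, which is an extra dividend of their choice.
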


In fact this result is in essence contained in the discussion p 229-232 of \cite{lady}, but we want here to give our own, new and different proof, using the tools proposed in 
\cite{lions-magenes}. They  differ from the ones used in \cite{lady}\cite{lieberman} but provide an elegant framework to handle the problem, and could be the starting point for the use of Generalized Dirichlet forms in these questions (on this point see Remark
\ref{rem:dir} below).
We believe that studying directly the weak solutions of $(\cP^\lambda_{\mathrm{div},\Delta}(\rho,a,B))$ with these tools has an interest per se, and paves the way for future research
in the presence of coefficients having even less smoothness. 
 \vspace{0.4cm}

In order to use the tools in \cite{lions-magenes} we denote $\cH=L^2(0,T;L^2(\R);\rho^{-1})$ the set of measurable functions $f(t,x)$ such that 
$$\int_0^T\int_\R|f(t,x)|^2\rho^{-1}(t,x)dxdt<\infty,$$
equipped with the scalar product
$$
\forall u,v\in \cH,\quad \langle u,v\rangle_{\cH}=\int_0^T\int_\R u(t,x)v(t,x)\rho^{-1}(t,x)dxdt.$$

We denote $\cV=L^2(0,T;H^1(\R);\rho^{-1})$ the set of mesurable functions $f(t,x)$ such that for any $t\in[0,T]$ the function $f(t,\cdot)$ is in $H^1(\R)$ and
$$\int_0^T\int_\R|f(t,x)|^2\rho^{-1}(t,x)dxdt+\int_0^T\int_\R|\frac{\mathrm{d} f}{\mathrm{d x}}(t,x)|^2\rho^{-1}(t,x)dxdt<\infty,$$
equipped with the scalar product
$$
\forall u,v\in \cV,\quad \langle u,v\rangle_{\cV}=\langle u,v\rangle_\cH+\langle \frac{\mathrm{d} u}{\mathrm{d x}},\frac{\mathrm{d} v}{\mathrm{d x}} \rangle_\cH.$$

We will denote by $||\cdot||_\cH$ and $||\cdot||_\cV$ the norms corresponding to the above defined scalar products. We denote by $\cV'$ the dual of $\cV$. Note that we have
$
\cV\subset\cH\subset\cV'$
with dense inclusions.

\begin{remark}
\label{rem:rho}
Note that as $\rho\in\Theta(m',M')$, of course $\cH$ (resp. $\cV$) is, as a set, just equal to
$L^2(0,T;L^2(\R))$ (resp. $L^2(0,T;H^1(\R))$). Besides, as a set, $\cV'$ is equal to
$L^2(0,T;H^{-1}(\R))$. 
\end{remark}

\vspace{0.3cm}
\noindent
We define a semigroup $(U_t)_{t\in[0,T]}$ of contraction on $\cV'$ by
$$
U_tf(s,\cdot)=\left\{
\begin{array}{ll}
f(s+t,\cdot)&\text{if }  0<s<T-t\\
0&\text{otherwise}.
\end{array}
\right.
$$
We denote $(\Lambda,D(\Lambda;\cV'))$ the infinitesimal generator of $(U_t)$. We have the following elementary fact.

\begin{lemma}
\label{lem:geneU}
We have 
$$D(\Lambda,\cV')=\Big\{ u\,|\;u\in\cV',\; \frac{\mathrm{d} u}{\mathrm{d t}}\in\cV', \;u(T,\cdot)=0 \Big\}$$
and $\Lambda u=\dfrac{\mathrm{d} u}{\mathrm{d t}}$ for any $u\in D(\Lambda,\cV')$.

\end{lemma}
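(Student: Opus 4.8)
The plan is to identify the generator of the translation semigroup $(U_t)$ on $\cV'$ explicitly, by unwinding the definition of the infinitesimal generator and relating the resulting limit to the distributional time derivative. First I would fix $u\in\cV'$ and compute, for $s\in(0,T)$, the difference quotient $\frac{1}{t}(U_tu-u)(s,\cdot)=\frac{1}{t}(u(s+t,\cdot)-u(s,\cdot))$ on $\{s<T-t\}$ and $-\frac1t u(s,\cdot)$ on $\{T-t<s<T\}$. The claim is that this converges in $\cV'$ precisely when $\frac{\mathrm{d}u}{\mathrm{d}t}\in\cV'$ and the boundary contribution near $s=T$ vanishes, which forces $u(T,\cdot)=0$; and that the limit is then $\frac{\mathrm{d}u}{\mathrm{d}t}$.

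For the inclusion $D(\Lambda,\cV')\subset\{u:\frac{\mathrm{d}u}{\mathrm{d}t}\in\cV',\ u(T,\cdot)=0\}$, I would argue as follows. Suppose $\frac1t(U_tu-u)\to v$ in $\cV'$ as $t\downarrow0$. Testing against a function $\varphi\in C^{\infty,\infty}_{c,c}(E)$ (so $\varphi$ has compact support in $(0,T)\times\R$), for $t$ small enough the support of $\varphi$ stays inside $\{s<T-t\}$, so the duality pairing $\langle\frac1t(U_tu-u),\varphi\rangle$ equals $\int_0^T\langle\frac1t(u(s+t)-u(s)),\varphi(s)\rangle\,ds$, which after the standard change of variables and discrete integration by parts converges to $-\int_0^T\langle u(s),\varphi'_t(s)\rangle\,ds$. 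Hence $\langle v,\varphi\rangle=-\langle u,\varphi'_t\rangle$ for all such $\varphi$, i.e. $v=\frac{\mathrm{d}u}{\mathrm{d}t}$ in the distributional sense, and since $v\in\cV'$ we get $\frac{\mathrm{d}u}{\mathrm{d}t}\in\cV'$. In particular $u$ has a representative in $C([0,T];\cV')$ (or at least the trace $u(T,\cdot)$ makes sense); to get $u(T,\cdot)=0$ I would test against $\varphi$ that do not vanish near $s=T$ and track the extra boundary term $-\frac1t\int_{T-t}^T\langle u(s),\varphi(s)\rangle\,ds$, which converges to $-\langle u(T,\cdot),\varphi(T,\cdot)\rangle$; since the full limit must still be $\langle\frac{\mathrm{d}u}{\mathrm{d}t},\varphi\rangle$ by the integration-by-parts formula valid up to the endpoint, comparing the two shows $\langle u(T,\cdot),\varphi(T,\cdot)\rangle=0$ for all admissible traces $\varphi(T,\cdot)$, hence $u(T,\cdot)=0$.

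For the reverse inclusion, given $u\in\cV'$ with $\frac{\mathrm{d}u}{\mathrm{d}t}=:w\in\cV'$ and $u(T,\cdot)=0$, I would write $u(s,\cdot)=-\int_s^T w(r,\cdot)\,dr$ (the fundamental-theorem-of-calculus representation in $\cV'$, legitimate because $u(T,\cdot)=0$), and then compute directly that $\frac1t(U_tu-u)(s,\cdot)=\frac1t\int_s^{s+t}w(r,\cdot)\,dr$ for $s<T-t$ (using $U_tu(s,\cdot)=u(s+t,\cdot)=-\int_{s+t}^T w(r,\cdot)\,dr$) and $=\frac1t\int_s^{T}w(r,\cdot)\,dr$ for $T-t<s<T$; both are uniformly bounded by a Hardy–Littlewood maximal-type estimate and converge to $w(s,\cdot)$ in $\cV'$ by the Lebesgue differentiation theorem for Bochner integrals plus dominated convergence. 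Hence $u\in D(\Lambda,\cV')$ and $\Lambda u=w=\frac{\mathrm{d}u}{\mathrm{d}t}$.

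The main obstacle I expect is the careful handling of the endpoint $s=T$: making precise in which sense the trace $u(T,\cdot)$ exists for $u\in\cV'$ with $\frac{\mathrm{d}u}{\mathrm{d}t}\in\cV'$ (this requires the embedding into $C([0,T];\cV')$, or a direct mollification argument), and correctly matching the boundary term produced by the difference quotient near $T$ with that trace. Everything else — the change of variables in the difference quotient, the convergence of the averaged integrals, the uniform bound — is routine once the functional-analytic setup (the Gelfand triple $\cV\subset\cH\subset\cV'$, density of $C^{\infty,\infty}_{c,c}(E)$, and the distributional interpretation of $\frac{\mathrm{d}}{\mathrm{d}t}$ recalled in the notations) is in place.
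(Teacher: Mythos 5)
Your argument is correct, and it is a genuinely self-contained proof where the paper simply writes ``See \cite{lions-magenes}, Section 3.4.3'' and gives no details. Both directions of your verification are the standard ones for the killed left-translation semigroup: testing the difference quotient against $C^{\infty,\infty}_{c,c}(E)$ identifies $\Lambda u$ with the distributional derivative; the extra term $-\frac1t\int_{T-t}^T\langle u(s,\cdot),\varphi(s,\cdot)\rangle\,ds$ coming from the killed strip forces $u(T,\cdot)=0$ once the trace is known to exist (via $u,\frac{\mathrm{d}u}{\mathrm{d}t}\in L^2(0,T;H^{-1}(\R))\Rightarrow u\in C([0,T];H^{-1}(\R))$); and conversely the representation $u(s,\cdot)=-\int_s^T\frac{\mathrm{d}u}{\mathrm{d}t}(r,\cdot)\,dr$ gives convergence of the Steklov averages in $\cV'$ together with the estimate $\int_{T-t}^T\|\frac1t\int_s^T\frac{\mathrm{d}u}{\mathrm{d}t}\|^2\,ds\leq\|\frac{\mathrm{d}u}{\mathrm{d}t}\|^2_{L^2(T-t,T;H^{-1})}\to0$ on the killed strip. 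Two small points of care. First, in this paper the duality $\langle\cdot,\cdot\rangle_{\cV',\cV}$ extends the $\rho^{-1}$-weighted scalar product of $\cH$, and $\rho$ depends on $t$; so when you pass from the difference quotient to the distributional derivative the weight stays evaluated at the time of the test function, and the resulting identity is exactly the one recorded in Remark \ref{rem:der-t-distru} (with the extra $\varphi\rho'_t/\rho$ term). This is pure bookkeeping and does not affect the statement, but it is worth carrying the weight explicitly if you write the proof out. Second, your phrase ``the full limit must still be $\langle\frac{\mathrm{d}u}{\mathrm{d}t},\varphi\rangle$'' should be justified by noting that $\Lambda u$ and $\frac{\mathrm{d}u}{\mathrm{d}t}$, having been shown equal against the dense class $C^{\infty,\infty}_{c,c}(E)$, coincide as elements of $\cV'=L^2(0,T;H^{-1}(\R))$ and hence against every $\varphi\in\cV$, which is what lets you compare the two computations and extract $\langle u(T,\cdot),\varphi(T,\cdot)\rangle=0$ for a family of traces dense in $H^1(\R)$. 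With these clarifications your proof is complete and, unlike the paper, does not outsource the result.
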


\begin{remark}
\label{rem:der-t-distru}
 In Lemma \ref{lem:geneU},  the time derivative 
$\dfrac{\mathrm{d} u}{\mathrm{d t}}$ is understood in the distribution sense. For example, in the case 
$u\in\cV\cap D(\Lambda,\cV')$, we have  $\langle u,v\rangle_{\cV',\cV}=\langle u,v\rangle_\cH$ for any $v\in\cV$,
and for any $\varphi\in C^{\infty,\infty}_{c,c}(E)$
\begin{equation*}
\langle \dfrac{\mathrm{d} u}{\mathrm{d t}},\varphi\rangle_{\cV',\cV}
=-\int_0^T\int_\R u(\varphi\rho^{-1})'_t dxdt
=-\langle u,\varphi'_t-\varphi\frac{\rho'_t}{\rho}\rangle_\cH.
\end{equation*}

Besides, for $u\in \cV\cap D(\Lambda,\cV')$ and $\varphi\in H^{1,1}_0(E)$ we have 
\begin{equation}
\label{eq:ipp1}
\langle \dfrac{\mathrm{d} u}{\mathrm{d t}},\varphi\rangle_{\cV',\cV}=
-\langle u,\dfrac{\mathrm{d} \varphi}{\mathrm{d t}}-\varphi\frac{\rho'_t}{\rho}\rangle_\cH
\end{equation}
(using the fact that $C^{\infty,\infty}_{c,c}(E)$ is dense in $H^{1,1}_0(E)$). Note that $\rho'_t$ exists in the classical sense, even if it is not continuous, thanks to the fact that the subdomains are cylindrical. Besides, $\rho'_t$ is bounded thanks to the $\mathbf{H}^{(t)}$-hypothesis.

\end{remark}

\begin{proof}
See \cite{lions-magenes}, Section 3.4.3.
\end{proof}

As $\rho\neq 1$ we cannot use directly Theorem 3.4.1 in \cite{lions-magenes}. We will use a natural generalization of this result, that we now state  (besides note that we deal here with backward problems with terminal condition). The proof is provided in the Appendix for the sake of completeness. 

\begin{theorem}
\label{thm:gene-lions}
Assume $\cA$ is a bilinear form on $\cV$ satisfying
\vspace{0.2cm}

i) $|\cA(u,v)|\leq C||u||_\cV||v||_\cV$ for all $u,v\in\cV$, where $0<C<\infty$.
\vspace{0.2cm}

ii) $\cA(v,v)+\lambda_0||v||_\cH^2\geq \alpha_0||v||_\cV^2$ for all $v\in\cV$ (for some $\lambda_0,\alpha_0>0$).

\vspace{0.2cm}

Then for any $G\in\cV'$ and any $f\in\cH$ there exists a unique $u\in L^2(0,T;H^1(\R))\cap C([0;T];L^2(\R))$ 
(in particular $u$ is in $\cV$) such that $u(T,\cdot)=f$, and with
$\dfrac{\mathrm{d} u}{\mathrm{d t}}\in L^2(0,T;H^{-1}(\R))$ 
 and
\begin{equation}
\label{eq:sol-faible2}
\big\langle -\dfrac{\mathrm{d} u}{\mathrm{d t}},v\big\rangle_{\cV',\cV}+\cA(u,v)=
\big\langle G,v\big\rangle_{\cV',\cV}\quad \forall v\in\cV.
\end{equation}

\end{theorem}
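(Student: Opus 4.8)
The plan is to reduce the assertion to a known abstract existence theorem for parabolic equations, namely Theorem 3.4.1 in \cite{lions-magenes}, by absorbing the weight $\rho^{-1}$ and the coefficient $\rho'_t/\rho$ into the bilinear form and the generator $\Lambda$ of the semigroup $(U_t)$. First I would set up the functional framework recalled just above: the Gelfand triple $\cV\subset\cH\subset\cV'$ with the weighted scalar products $\langle\cdot,\cdot\rangle_\cH$ and $\langle\cdot,\cdot\rangle_\cV$, and the operator $(\Lambda,D(\Lambda,\cV'))$ from Lemma \ref{lem:geneU}, so that $\Lambda u=\tfrac{\mathrm{d}u}{\mathrm{d t}}$ and $u\in D(\Lambda,\cV')$ forces $u(T,\cdot)=0$. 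Since we want to prescribe $u(T,\cdot)=f$ rather than $u(T,\cdot)=0$, I would first lift the terminal datum: pick any $w\in\cV$ with $\tfrac{\mathrm{d}w}{\mathrm{d t}}\in\cV'$ and $w(T,\cdot)=f$ (for instance $w(t,x)=f(x)$ viewed as a time-independent element of $\cV$, which lies in $\cH$ and has zero time derivative, using $f\in\cH$), and look for $u=w+\tilde u$ with $\tilde u\in D(\Lambda,\cV')\cap\cV$, i.e. $\tilde u(T,\cdot)=0$. Substituting into \eqref{eq:sol-faible2} turns the problem into: find $\tilde u\in\cV$ with $-\Lambda\tilde u\in\cV'$, $\tilde u(T,\cdot)=0$, and
$$
\langle -\Lambda\tilde u,v\rangle_{\cV',\cV}+\cA(\tilde u,v)=\langle \tilde G,v\rangle_{\cV',\cV}\quad\forall v\in\cV,
$$
where $\tilde G=G+\Lambda w-\cA(w,\cdot)\in\cV'$ by the continuity hypothesis i) and the fact that $w$ has a time derivative in $\cV'$.

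Next I would verify that the hypotheses of the abstract theorem hold for the pair $(\Lambda,\cA)$. The operator $-\Lambda$ is, up to the sign convention adapted to a backward (terminal-value) problem, the generator of a strongly continuous contraction semigroup on $\cV'$, which is exactly the structural input needed: $(U_t)$ is a contraction semigroup on $\cV'$ by construction, and one checks the accretivity/maximal monotonicity of $-\Lambda$ on its domain, together with $\langle -\Lambda\tilde u,\tilde u\rangle_{\cV',\cV}\ge 0$ for $\tilde u\in D(\Lambda,\cV')$ with $\tilde u(T,\cdot)=0$ — this last inequality follows from the identity $\langle \tfrac{\mathrm{d}\tilde u}{\mathrm{d t}},\tilde u\rangle=\tfrac12\tfrac{\mathrm{d}}{\mathrm{d t}}\|\tilde u(t,\cdot)\|^2_{L^2(\R)}$ integrated in time against $\rho^{-1}$, using that $\tilde u$ vanishes at $T$ and the boundedness of $\rho^{-1}$; the weight contributes a lower-order term controlled by hypothesis ii). The coercivity hypothesis ii) on $\cA$ (a Gårding inequality with the shift $\lambda_0\|v\|_\cH^2$) and the continuity hypothesis i) are assumed outright. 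With these in hand, the abstract parabolic existence and uniqueness theorem — proved by a Galerkin approximation in the space variable combined with the Lions projection lemma, or equivalently by the Lax–Milgram theorem applied on a suitable space of time-dependent functions after the exponential shift $\tilde u\mapsto e^{\lambda_0 t}\tilde u$ that removes the $\lambda_0\|v\|_\cH^2$ term — yields a unique $\tilde u\in L^2(0,T;H^1(\R))\cap C([0,T];L^2(\R))$ with $\tfrac{\mathrm{d}\tilde u}{\mathrm{d t}}\in L^2(0,T;H^{-1}(\R))$ solving the shifted problem. Undoing the substitutions $u=w+\tilde u$ gives the claimed solution, and the regularity $u\in C([0,T];L^2(\R))$ is inherited from $\tilde u$ since $w$ is continuous in time.

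For uniqueness, suppose $u_1,u_2$ are two solutions in the stated class; then $u=u_1-u_2$ satisfies $u(T,\cdot)=0$, $\tfrac{\mathrm{d}u}{\mathrm{d t}}\in\cV'$, and $\langle-\tfrac{\mathrm{d}u}{\mathrm{d t}},v\rangle_{\cV',\cV}+\cA(u,v)=0$ for all $v\in\cV$. Testing with $v=u$ (legitimate since $u\in\cV$) and using $\langle-\tfrac{\mathrm{d}u}{\mathrm{d t}},u\rangle_{\cV',\cV}=\tfrac12\tfrac{\mathrm{d}}{\mathrm{d t}}\|u(t,\cdot)\|_\cH^2$ pointwise in $t$ — here $\cH$ should be read at fixed time with the weight $\rho^{-1}(t,\cdot)$, and one has to be slightly careful because $\rho$ depends on $t$, so a term involving $\rho'_t$ appears, but it is bounded by the $\mathbf{H}^{(t)}$-hypothesis — together with the Gårding inequality ii), one obtains a differential inequality $\tfrac{\mathrm{d}}{\mathrm{d t}}\|u(t,\cdot)\|_\cH^2\ge -C\|u(t,\cdot)\|_\cH^2$ with $u(T,\cdot)=0$, whence $u\equiv 0$ by Grönwall integrated backward from $T$. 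I expect the main obstacle to be the careful bookkeeping of the $t$-dependence of the weight $\rho^{-1}$: every integration by parts in time produces the extra term $-\langle u,\varphi\,\rho'_t/\rho\rangle_\cH$ seen in \eqref{eq:ipp1} and \eqref{eq:sol-faible}, and one must confirm that this term is genuinely lower order (absorbable into $\lambda_0\|v\|_\cH^2$) so that the abstract theorem applies verbatim; this is exactly where the $\mathbf{H}^{(t)}$-hypothesis on $\rho$ and the cylindrical structure of the subdomains (which guarantees $\rho'_t$ exists classically and is bounded) enter.
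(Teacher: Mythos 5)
Your overall architecture matches the paper's: lift the terminal datum to reduce to $u(T,\cdot)=0$, perform the exponential shift $e^{\lambda_0 t}$ to absorb the G\aa rding constant $\lambda_0$, and invoke the abstract isomorphism theorem of Lions--Magenes for $-\Lambda-A_{\lambda_0}$ acting on $\cV\cap D(\Lambda,\cV')$ (the paper uses their Theorem 3.1.1 rather than 3.4.1, precisely because the weight $\rho^{-1}$ prevents a direct application of the latter; your plan of re-checking the abstract hypotheses in the weighted spaces amounts to the same thing). Your uniqueness argument (testing with $v=u$ and a backward Gr\"onwall inequality) differs from the paper's, which simply reuses the isomorphism property of $-\Lambda-A_{\lambda_0}$ after the exponential shift; both are valid, yours being the more classical energy route at the cost of having to justify the identity $\langle \frac{\mathrm{d}u}{\mathrm{d t}},u\rangle_{\cV',\cV}=\frac12\frac{\mathrm{d}}{\mathrm{d t}}\|u(t,\cdot)\|^2+(\text{bounded }\rho'_t\text{ correction})$ for $u$ in the energy class, which you do flag.

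The one genuine gap is the lifting of the terminal datum. You propose $w(t,x)=f(x)$ as a time-independent lift, but the theorem only assumes $f\in\cH$, i.e.\ $f\in L^2(\R)$; the constant-in-time extension then lies in $L^2(0,T;L^2(\R))$ but \emph{not} in $\cV=L^2(0,T;H^1(\R))$ unless $f\in H^1(\R)$. Since your reduction needs $\cA(w,\cdot)\in\cV'$, hence $w\in\cV$ (the form $\cA$ involves $\frac{\mathrm{d}w}{\mathrm{d x}}$), the substitution $u=w+\tilde u$ breaks down with that choice. The existence of a lift $u_r\in L^2(0,T;H^1(\R))$ with $\frac{\mathrm{d}u_r}{\mathrm{d t}}\in L^2(0,T;H^{-1}(\R))$ and $u_r(T,\cdot)=f$ for $f$ merely in $L^2(\R)$ is a nontrivial trace/surjectivity theorem (the trace map from $\{u\in L^2(0,T;H^1(\R)):\frac{\mathrm{d}u}{\mathrm{d t}}\in L^2(0,T;H^{-1}(\R))\}$ onto $L^2(\R)$ is onto); this is exactly what the paper invokes from Lions--Magenes, Chap.~1. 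Your hedge ``pick any $w\in\cV$ with $w(T,\cdot)=f$'' presupposes this result without proof, and the only concrete construction you offer is the one that fails.
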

\begin{proof}
See the Appendix.
\end{proof}
\vspace{0.3cm}

\noindent
In order to apply Theorem \ref{thm:gene-lions} we now define for any $u,v\in \cV$ 
\begin{equation}
\label{eq:defA}
\cA(u,v)=\frac 1 2\int_0^T\int_\R a(t,x)\frac{\mathrm{d} u}{\mathrm{d x}}(t,x)\frac{\mathrm{d} v}{\mathrm{d x}}(t,x)dxdt
-\int_0^T\int_\R B(t,x)\frac{\mathrm{d} u}{\mathrm{d x}}(t,x)v(t,x)\rho^{-1}(t,x)dxdt+\lambda\langle u,v\rangle_\cH
\end{equation}
and for any $\lambda_0>0$
\begin{equation}
\label{eq:defA0}
\cA_{\lambda_0}(u,v)=\cA(u,v)+\lambda_0\langle u,v\rangle_\cH.
\end{equation}

Not surprisingly, using the strict ellipticity and boundedness of $\rho,a$, and the boundednes of $B$ we get the following result (the proof is postponed to the Appendix).

\begin{lemma}
\label{lem:Acontcoer}

The bilinear form $\cA(\cdot,\cdot)$  defined by \eqref{eq:defA} is continuous, i.e.
\begin{equation}
\label{eq:Adefcont}
\forall u,v\in\cV,\quad |\cA(u,v)|\leq C||u||_\cV||v||_\cV,
\end{equation}
where $C=C(m',M',\lambda)$.

It is always possible to choose $\lambda_0>0$ large enough such that $\cA_{\lambda_0}(\cdot,\cdot)$ defined by 
\eqref{eq:defA}\eqref{eq:defA0} is coercive, i.e.
\begin{equation}
\label{eq:Acoer}
\forall v\in\cV,\quad \cA_{\lambda_0}(v,v)\geq \alpha_0||v||^2_\cV.
\end{equation}
where $\alpha_0=\alpha_0(m',M')$.
\end{lemma}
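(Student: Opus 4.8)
The plan is to verify the two inequalities \eqref{eq:Adefcont} and \eqref{eq:Acoer} directly from the definitions \eqref{eq:defA} and \eqref{eq:defA0}, using only the uniform bounds $m'\le \rho,a\le M'$, $|B|\le M'$, $\lambda\ge 0$, together with elementary inequalities (Cauchy--Schwarz and Young). The key structural point to keep in mind throughout is Remark \ref{rem:rho}: because $\rho\in\Theta(m',M')$, the weighted norms $||\cdot||_\cH$ and $||\cdot||_\cV$ are equivalent to the unweighted norms on $L^2(0,T;L^2(\R))$ and $L^2(0,T;H^1(\R))$ respectively, with equivalence constants depending only on $m',M'$; hence it suffices to produce the bounds with implicit constants of that kind.

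For the continuity \eqref{eq:Adefcont}, I would bound the three terms of $\cA(u,v)$ separately. The first term is $\frac12\int_0^T\int_\R a\,\frac{\mathrm d u}{\mathrm dx}\frac{\mathrm d v}{\mathrm dx}\,dxdt$; since $0<a\le M'$, Cauchy--Schwarz in $L^2(0,T;L^2(\R))$ gives a bound by $\tfrac{M'}{2}\,||\frac{\mathrm d u}{\mathrm dx}||\,||\frac{\mathrm d v}{\mathrm dx}||\le \tfrac{M'}{2}\,||u||_\cV||v||_\cV$ up to the $\rho$-weight equivalence. For the second term, $|B|\le M'$ and the boundedness of $\rho^{-1}$ by $1/m'$ give, again by Cauchy--Schwarz, a bound by $\frac{M'}{m'}\,||\frac{\mathrm d u}{\mathrm dx}||\,||v||$, controlled by $\frac{M'}{m'}||u||_\cV||v||_\cV$. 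The third term $\lambda\langle u,v\rangle_\cH$ is bounded by $\lambda\,||u||_\cH||v||_\cH\le\lambda\,||u||_\cV||v||_\cV$. Summing gives \eqref{eq:Adefcont} with $C=C(m',M',\lambda)$.

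For the coercivity \eqref{eq:Acoer} of $\cA_{\lambda_0}$, I would compute $\cA_{\lambda_0}(v,v)=\frac12\int_0^T\int_\R a\big(\frac{\mathrm d v}{\mathrm dx}\big)^2 dxdt-\int_0^T\int_\R B\frac{\mathrm d v}{\mathrm dx}\,v\,\rho^{-1}dxdt+(\lambda+\lambda_0)||v||_\cH^2$. The first term is $\ge \frac{m'}{2}||\frac{\mathrm d v}{\mathrm dx}||^2$ by strict ellipticity $a\ge m'$. The troublesome cross term is handled by Young's inequality: $\big|\int B\frac{\mathrm d v}{\mathrm dx}v\rho^{-1}\big|\le \frac{M'}{m'}\,||\frac{\mathrm d v}{\mathrm dx}||\,||v||\le \frac{m'}{4}||\frac{\mathrm d v}{\mathrm dx}||^2+\frac{(M')^2}{(m')^3}||v||^2$ (with a constant from $\rho^{-1}\le 1/m'$). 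Absorbing the $\frac{m'}{4}||\frac{\mathrm d v}{\mathrm dx}||^2$ into the ellipticity term leaves $\ge \frac{m'}{4}||\frac{\mathrm d v}{\mathrm dx}||^2+\big(\lambda+\lambda_0-\tfrac{(M')^2}{(m')^3}\big)||v||^2$; choosing $\lambda_0$ large enough (depending only on $m',M'$, since $\lambda\ge0$) makes the coefficient of $||v||^2$ at least some positive constant, and then translating back to the $\rho$-weighted norms gives $\cA_{\lambda_0}(v,v)\ge\alpha_0||v||_\cV^2$ with $\alpha_0=\alpha_0(m',M')$.

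I do not expect any real obstacle: the whole proof is a bookkeeping exercise in Cauchy--Schwarz, Young, and the norm-equivalence of Remark \ref{rem:rho}. The only point requiring a little care is the mismatch between the $\rho^{-1}$-weighted terms (in $\langle\cdot,\cdot\rangle_\cH$ and in the $B$-term) and the unweighted term $\frac12\int a\,\frac{\mathrm d u}{\mathrm dx}\frac{\mathrm d v}{\mathrm dx}$ coming from the divergence form; this is precisely why the constants $C$ and $\alpha_0$ must be allowed to depend on both $m'$ and $M'$, and why one cannot do better than stating the result with such constants. Everything else is routine, so I would keep the written proof short and refer the reader to the Appendix only for the bookkeeping if desired.
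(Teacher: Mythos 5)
Your proposal is correct and follows essentially the same route as the paper's proof in the Appendix: Cauchy--Schwarz with the bounds $m'\le \rho,a\le M'$, $|B|\le M'$ for continuity, then Young's inequality to absorb the cross term into the ellipticity term and a large $\lambda_0$ (using $\lambda\ge 0$) for coercivity, with the norm equivalence of Remark \ref{rem:rho} handling the $\rho^{-1}$-weights. The only differences are in the explicit choice of the Young parameter and the resulting numerical constants, which are immaterial.
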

\begin{proof}
See the Appendix.
\end{proof}

\vspace{0.5cm}

We are now in position to prove Proposition \ref{prop:sol-faible}. Indeed, thanks to Lemma \ref{lem:Acontcoer} we may apply Theorem \ref{thm:gene-lions} with $\cA(\cdot,\cdot)$ defined by \eqref{eq:defA} and with  $G\in\cV'$
defined by $\langle G,v\rangle_{\cV',\cV}=-\langle g,v\rangle_\cH$ for any $v\in\cV$.  For any $\varphi\in H^{1,1}_{0}(E)\subset\cV$, using \eqref{eq:ipp1} in the computation of the term
$\big\langle -\dfrac{\mathrm{d} u}{\mathrm{d t}},\varphi\big\rangle_{\cV',\cV}$ appearing in \eqref{eq:sol-faible2} ($\varphi$ replaces $v$), we get \eqref{eq:sol-faible}.

\vspace{0.5cm}
It is possible to go a bit further in the analysis of the weak solution and to prove the following lemma, that asserts that the weak solution of $(\cP^\lambda_{\mathrm{div},\Delta}(\rho,a,B))$ is of class $H^1$ in the time variable.

\begin{lemma}
\label{lem:ut-L2}
The weak $u$ solution of $(\cP^\lambda_{\mathrm{div},\Delta}(\rho,a,B))$ satisfies
$\dfrac{\mathrm{d} u}{\mathrm{d t}}\in L^2(0,T;L^2(\R))$.
\end{lemma}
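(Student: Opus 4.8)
The weak solution $u$ already belongs to $\cV = L^2(0,T;H^1(\R);\rho^{-1})$ with $\tfrac{\mathrm d u}{\mathrm d t}\in\cV' = L^2(0,T;H^{-1}(\R))$, and it satisfies the variational identity \eqref{eq:sol-faible}. The strategy is to use the extra spatial regularity that this identity encodes: the transmission problem is one-dimensional and uniformly elliptic, so heuristically $\tfrac{\mathrm d u}{\mathrm d t}$ equals $-\tfrac\rho2(au'_x)'_x - Bu'_x + (\lambda - \rho'_t/\rho)u + g$ (all up to the factor $\rho$), and the right-hand side lies in $L^2$ in space provided $(au'_x)'_x$ does. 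So the real task is to upgrade the spatial regularity of $u(t,\cdot)$ from $H^1$ to "piecewise $H^2$" away from the interfaces, with the transmission condition $(\star)$ controlling the jumps of $au'_x$ at $x=i$.

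\textbf{Step 1: interpret \eqref{eq:sol-faible} as an elliptic identity in $x$ for a.e.\ $t$.} For a.e.\ $t\in(0,T)$ the function $u(t,\cdot)\in H^1(\R)$ and, testing \eqref{eq:sol-faible} against $\varphi(t,x) = \chi(t)\theta(x)$ with $\chi\in C_c^\infty(0,T)$ and $\theta\in C_c^\infty(\R)$, one reads off that, in the sense of distributions on $\R$,
$$
\tfrac12\big(a(t,\cdot)\,u'_x(t,\cdot)\big)'_x = -\rho^{-1}\tfrac{\mathrm d u}{\mathrm d t}(t,\cdot) - B(t,\cdot)\rho^{-1}u'_x(t,\cdot) + \rho^{-1}\big(\lambda - \tfrac{\rho'_t}{\rho}\big)u(t,\cdot) + \rho^{-1}g(t,\cdot).
$$
Here I use that $\rho$ satisfies the $\mathbf H^{(t)}$-hypothesis, so $\rho'_t$ exists classically and is bounded, and that the subdomains are cylindrical.

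\textbf{Step 2: a priori $L^2$-in-$t$-and-$x$ bound on $(au'_x)'_x$.} The plan is to use the already-available summability. We know $\tfrac{\mathrm d u}{\mathrm d t}\in L^2(0,T;H^{-1}(\R))$; this is not yet enough. Instead I would run the standard Galerkin energy estimate inside the proof of Theorem \ref{thm:gene-lions} one step further: testing the approximate equations with $\tfrac{\mathrm d u_m}{\mathrm d t}$ (which is legitimate at the finite-dimensional level), using the $\mathbf H^{(t)}$-hypothesis to handle $\int_0^T\int_\R a'_t (u'_x)^2$ — here cylindrical subdomains are essential, since it is the function $a$ (not $u$) that must be differentiated in $t$ — and Gronwall in $t$, one obtains a uniform bound $\|\tfrac{\mathrm d u_m}{\mathrm d t}\|_{L^2(0,T;L^2(\R))} + \sup_t\|u_m(t,\cdot)\|_{H^1} \le C$. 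Passing to the limit gives $\tfrac{\mathrm d u}{\mathrm d t}\in L^2(0,T;L^2(\R))$ directly, and then Step 1 shows $(au'_x)'_x\in L^2(0,T;L^2(\R))$ as well. Because $a$ is $C^{0,1}$ away from $\Delta$ and bounded below, on each cylinder $(0,T)\times(i,i+1)$ this yields $u\in L^2(0,T;H^2(i,i+1))$.

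\textbf{The main obstacle.} The delicate point is justifying the test-function choice $\varphi = \tfrac{\mathrm d u}{\mathrm d t}$ (or its Galerkin analogue) and, above all, controlling the term $\int_0^T\int_\R a'_t\,(u'_x)^2\,dx\,dt$ that appears when one integrates by parts in $t$ the term $\tfrac12\int a\,u'_x\,(\tfrac{\mathrm d u}{\mathrm d t})'_x$; this is exactly where the $\mathbf H^{(t)}$-hypothesis on $\rho$ (hence on $a$, via the relation between the two formulations) and the cylindrical structure of the subdomains are used — if the interfaces moved, $a'_t$ would carry a singular (measure) part along $\Delta_{\bf x}$ and the argument would break, which is precisely the difficulty the authors flag in Section \ref{sec:markov}. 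A secondary technical nuisance is that the terminal condition $u(T,\cdot)=f$ must be propagated through the energy estimate (one works on $[t,T]$ and integrates the backward equation), but this is routine once the sign of $\lambda_0$ in Lemma \ref{lem:Acontcoer} is arranged. With these in hand, $\tfrac{\mathrm d u}{\mathrm d t}\in L^2(0,T;L^2(\R))$ as claimed.
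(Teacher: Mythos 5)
Your proposal is essentially correct and rests on the same key computation as the paper: test the equation with the time derivative of (a regularization of) $u$, integrate by parts in $t$ so that the troublesome term $\int_0^T\int_\R a\,u'_x\,(u'_t)'_x$ becomes $\tfrac12\int (a|u'_x|^2)'_t-\tfrac12\int a'_t|u'_x|^2-\dots$, and control $\int a'_t|u'_x|^2$ by the $\mathbf H^{(t)}$-hypothesis together with the cylindrical structure of the subdomains (which is exactly why $a'_t$ has no singular part along $\Delta$). You have correctly located the one genuinely delicate point. Where you diverge is the regularization device: the paper does not use a Galerkin scheme but instead mollifies the weak solution itself (Steklov-type averaging in the style of \cite{lieberman}, p.~22), observes that $u_\tau$ is a weak solution of the problem with mollified coefficients $\rho_\tau,a_\tau,B_\tau$, and then uses $\varphi=(u_\tau)'_t\zeta_n^2$ as a test function, with $\zeta_n$ a spatial cut-off needed to make this admissible on the unbounded domain; the uniform-in-$\tau$ bound plus weak $L^2$ compactness and an integration by parts in $t$ then identify the weak limit with $\tfrac{\mathrm d u}{\mathrm d t}$. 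Your Galerkin route is a legitimate alternative, but note that the existence proof in Theorem \ref{thm:gene-lions} goes through the Lions--Magenes isomorphism, not Galerkin, so you would have to set up the approximating scheme from scratch and invoke uniqueness of the weak solution to identify its limit with $u$ --- a standard but nontrivial extra layer. Your Step 1 (the a.e.-$t$ elliptic identity and the piecewise $H^2$ regularity) is not needed for the statement of the lemma and does not appear in the paper's proof; the $L^2$ bound on $\tfrac{\mathrm d u}{\mathrm d t}$ comes directly from the energy estimate, as you yourself note.
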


\begin{proof}
See the Appendix.
\end{proof}

The above result is one of the crucial steps in the study of the case $\rho\equiv 1$ in \cite{lady1}.
However, it seems challenging to  adapt all the other steps of \cite{lady1} and \cite{lady} to our case $\rho\neq 1$, see Remark \ref{rem:rho-diff}. 

\begin{remark}
\label{rem:dir}
If we have a look at the operator $(\Lambda,D(\Lambda,\cV'))$ and the form $\cA(\cdot,\cdot)$ we have used just above, we can notice that those objects are very similar to the ones used to define a generalized Dirichlet form (note that the formalism in \cite{stannat} concerning the abstract operators seems inspired by 
\cite{lions-magenes}). 

This could be the starting point of the use of generalized Dirichlet forms to handle the problem of a fairly broad class of time inhomogeneous SDEs with local time
(see the already mentionned papers \cite{russo-trutnau}, \cite{trutnau1} for some results in this direction). This issue could be addressed in a future work.
\end{remark}

\subsection{Classical solutions in the case $\rho\equiv 1$}
\label{ssec:solclassique}

Here we want to summarize the results of the seminal paper \cite{lady1} for the problem 
$(\mathcal{P}^\lambda_{\mathrm{div},\Delta_{\rm z}}(1,\mathbf{A},\mathbf{B}))$ 
that we will use in Subsection \ref{ssec:solclassique-transfo}. In fact, for our coming purpose, we consider a slightly more general problem, that we denote by 
$(\mathcal{P}^\lambda_{\mathrm{div},\Delta_{\rm z},(l,r)}(1,\mathbf{A},\mathbf{B}))$
(with $-\infty\leq l<r\leq \infty$).
 It is defined by the following system of equations:
 $$
\left\{
\begin{array}{rcll}
\big[v'_t+\dfrac 1 2\big(\mathbf{A}v'_z\big)'_z+\mathbf{B}\,v'_z-\lambda v\big](t,z)&=&g(t,z)&\forall (t,z)\in [0,T)\times(l,r)
\setminus\Delta_{\bf z} \\
\\
\mathbf{A}(t,z_i+)v'_z(t,z_i+) &=& \mathbf{A}(t,z_i-)v'_z(t,z_i-)& \forall 1\leq i\leq I,\,\forall t\in[0,T)\;\;(\star) \\
\\
v(T,z)&=&f(z)&\forall z\in (l,r).\\
\\
v(t,l)&=&f_l(t)&\forall t\in[0,T)\\
\\
v(t,r)&=&f_r(t)&\forall t\in[0,T).\\
\end{array}
\right.
$$

Here we have $l<z_1<\ldots<z_I<r$ and we have denoted 
$\Delta_{\rm z}=\{(t,z_i):0\leq t\leq T\}_{i=1}^I$. The functions
$f_l,f_r$ giving the Dirichlet conditions are in $L^2(0,T)$. Note that the problem
$(\mathcal{P}^\lambda_{\mathrm{div},\Delta_{\rm z}}(1,\mathbf{A},\mathbf{B}))$ corresponds simply to $l=-\infty$, $r=\infty$ and
$f_l=f_r=0$.

We should precise what we mean by a classical solution $v(t,z)$ of 
$(\mathcal{P}^\lambda_{\mathrm{div},\Delta_{\rm z},(l,r)}(1,\mathbf{A},\mathbf{B}))$. 
For any compact $K\subset (0,T)\times (l,r)$ this is a function of class $C(K)\cap C^{1,2}(K\setminus \Delta_{\rm z})$
 such that for all $1\leq i\leq I$ the limits $v'_t(t,z_i\pm)$, $v'_z(t,z_i\pm)$ and $v''_{zz}(t,z_i\pm)$ exist and are continuous as functions of $t\in[0,T)$ (we assume for simplicity that $K$ contains all the $z_i$'s). Then $v(t,z)$ satisfies in particular the first and second line of 
 $(\mathcal{P}^\lambda_{\mathrm{div},\Delta_{\rm z},(l,r)}(1,\mathbf{A},\mathbf{B}))$
 in the classical sense.

\begin{theorem}[O.A. Ladyzhenskaya et al., \cite{lady1}] 
\label{thm:lady}
For any $\mathbf{A}\in\Theta(m',M')$ satisfying the $\mathbf{H}^{(x_i)}$ and $\mathbf{H}^{(t)}$-hypotheses, any 
$\mathbf{B}\in\Xi(M')$ satisfying the $\mathbf{H}^{(t)}$-hypothesis, and provided 
that $g$ satisfies the $\mathbf{H}^{(t)}$-hypothesis,
the parabolic problem $(\mathcal{P}^\lambda_{\mathrm{div},\Delta_{\rm z},(l,r)}(1,\mathbf{A},\mathbf{B}))$ has a classical solution $v(t,z)$,
that is Hölder continuous (see Remark \ref{rem:holder}).  Besides the time derivative $v'_t$ is itself Hölder continuous.
\end{theorem}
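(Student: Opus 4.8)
The plan is to reproduce, in our one-dimensional framework, the main steps of \cite{lady1}, the only genuinely new adjustments being a (possibly unbounded) spatial interval $(l,r)$ carrying the Dirichlet data $f_l,f_r$ and a terminal rather than an initial condition. The starting point is the existence and uniqueness of a weak solution $v\in L^2(0,T;H^1(l,r))\cap C([0,T];L^2(l,r))$ with $\mathrm{d}v/\mathrm{d}t\in L^2(0,T;H^{-1}(l,r))$: one reproduces the variational argument underlying Proposition~\ref{prop:sol-faible} — now with $\rho\equiv1$, so that the weight disappears and the bilinear form is the classical one — after subtracting a fixed smooth lift of $(f_l,f_r)$ to reduce to homogeneous Dirichlet conditions. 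When $l=-\infty$, $r=\infty$ and $f_l=f_r=0$ this is essentially the $\rho\equiv1$ case of Proposition~\ref{prop:sol-faible}.

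The first regularity gain is in the time variable: testing the variational identity against second-order difference quotients of $v$ in $t$ and invoking the $\mathbf{H}^{(t)}$-hypothesis on $\mathbf{A}$, $\mathbf{B}$ and $g$ — which provides bounded classical derivatives $\mathbf{A}'_t$, $\mathbf{B}'_t$, $g'_t$, the cylindrical geometry of the subdomains being exactly what lets these exist across the interfaces — one gets $\mathrm{d}v/\mathrm{d}t\in L^2(0,T;L^2(l,r))$ (the argument is that of Lemma~\ref{lem:ut-L2}, with $\rho\equiv1$). Rewriting the equation on each open strip $(0,T)\times(z_i,z_{i+1})$ (conventions $z_0=l$, $z_{I+1}=r$) as $\tfrac12(\mathbf{A}v'_z)'_z=g-v'_t-\mathbf{B}v'_z+\lambda v\in L^2$, and using uniform ellipticity together with the Lipschitz-in-space regularity ($\mathbf{H}^{(x_i)}$) of $\mathbf{A}$, elliptic regularity in the space variable yields $v\in L^2(0,T;H^2(z_i,z_{i+1}))$ for each $i$.

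The passage to full regularity up to and across the interfaces $z=z_i$ is where I expect the main difficulty to lie; it is the technical heart of \cite{lady1}. The two structural facts to exploit are the continuity of $v$ across each line $z=z_i$ and the transmission condition $(\star)$, which says exactly that the flux $w:=\mathbf{A}v'_z$ carries no jump there, hence is continuous on $E$. Running the difference-quotient energy estimates and the interior and near-interface $L^p$ parabolic estimates of \cite{lady1} in thin strips straddling an interface — using crucially that $w$ is continuous across $z_i$ while $w'_z\in L^2$ on each side, so that $w(t,\cdot)\in H^1(l,r)$ globally — together with the embedding valid in one space dimension (a function that is $H^1$ in space and $H^1$ in time on a two-dimensional strip is Hölder continuous), one obtains that $v$, $w$, $v'_z$ (from each side) and $v'_t$ are Hölder continuous on compact subsets of $E^\circ$ away from $z=l,r$; in particular the one-sided limits $v'_z(t,z_i\pm)$ and $v'_t(t,z_i\pm)$ exist and are continuous in $t$. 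Finally, writing the PDE on each open strip in non-divergence form, $\tfrac{\mathbf{A}}{2}v''_{zz}=g-v'_t-(\mathbf{B}+\tfrac12\mathbf{A}'_z)v'_z+\lambda v$, whose right-hand side is now continuous and whose leading coefficient is bounded below by $m'/2$, one reads off that $v''_{zz}$ is continuous on each open subdomain — so $v$ is a genuine $C^{1,2}$ classical solution there — with one-sided limits $v''_{zz}(t,z_i\pm)$ continuous in $t$.

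It remains to upgrade $v'_t$ to a globally Hölder function, i.e. also across the interfaces. One route: $v'_t(t,z_i+)$ and $v'_t(t,z_i-)$ both equal the $t$-derivative of the single Hölder function $t\mapsto v(t,z_i)$, hence coincide, so $v'_t$ is continuous across each $z_i$ and, being Hölder on each closed subdomain by the previous step, is Hölder on $E^\circ$ away from the lateral boundary. The route of \cite{lady1} is instead to note that $p:=v'_t$ solves on $[0,T)\times(l,r)\setminus\Delta_{\rm z}$ the parabolic diffraction problem obtained by differentiating the PDE and $(\star)$ in $t$ — legitimate since $\mathbf{A}$, $\mathbf{B}$, $g$ are $C^{1,0}$ on each subdomain — with right-hand side $g'_t+\tfrac12(\mathbf{A}'_tv'_z)'_z+\mathbf{B}'_tv'_z$ and conormal inhomogeneity $\mathbf{A}'_t(t,z_i-)v'_z(t,z_i-)-\mathbf{A}'_t(t,z_i+)v'_z(t,z_i+)$, which is by now a known Hölder function, and to apply to this problem for $p$ the regularity already developed. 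Since all the estimates are interior or up to the lateral Dirichlet boundaries, and local in nature, the statement is phrased on arbitrary compacts $K\subset(0,T)\times(l,r)$.
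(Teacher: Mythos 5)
Your overall architecture matches the paper's sketch (weak solution, $\mathrm{d}v/\mathrm{d}t\in L^2$ as in Lemma \ref{lem:ut-L2}, the elliptic equation for fixed $t$ to recover $(\star)$ and the classical form of the PDE on each subdomain, and the differentiated equation for $v'_t$). But there is a genuine gap at the central regularity step, which is precisely the ``particularly involved'' part of \cite{lady1} that the paper's Steps 2 and 4 delegate to the De Giorgi--Nash interior H\"older estimate (equivalently the parabolic Harnack inequality) for weak solutions of divergence-form equations with merely bounded measurable coefficients. You replace it by the claim that ``a function that is $H^1$ in space and $H^1$ in time on a two-dimensional strip is H\"older continuous.'' As stated this is false: $L^2_tH^1_z\cap H^1_tL^2_z$ is just $H^1$ of a two-dimensional domain, which does not embed into $C^0$ (dimension $2$ is critical; $\log\log$-type functions are the standard counterexample). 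The statement you could legitimately use in one space dimension is the anisotropic embedding $W^{2,1}_2\hookrightarrow C^{1/2,1/4}$, and since you do obtain $v\in L^2(0,T;H^2)$ on each strip together with $v'_t\in L^2$, this would give the H\"older continuity of $v$ on each closed subdomain (hence globally, by matching across the interfaces) by a route more elementary than Harnack --- a genuine simplification available only because $n=1$.

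What that corrected embedding does not give you is the H\"older continuity, or even the existence of continuous one-sided limits, of $v'_z$ and $v'_t$, which you assert in the same sentence. For $v'_t$ this is the crux of the theorem: the continuity in $t$ of $v'_z(t,z_i\pm)$ and $v''_{zz}(t,z_i\pm)$ required by the definition of a classical solution is extracted in \cite{lady1} by feeding the H\"older continuity of $v'_t$ back into the elliptic equation $\tfrac12(\mathbf{A}v'_z)'_z=g-v'_t-\mathbf{B}v'_z+\lambda v$. And the H\"older continuity of $p=v'_t$ cannot be obtained by the $W^{2,1}_2$ route, because the equation satisfied by $p$ has only a distributional source term $g'_t-\tfrac12(\mathbf{A}'_tv'_z)'_z-\mathbf{B}'_tv'_z$, so one cannot bootstrap $p$ into $L^2(0,T;H^2)$ without differentiating the data once more in time, which the $\mathbf{H}^{(t)}$-hypothesis does not permit. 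This is exactly why \cite{lady1} invokes at this point the De Giorgi--Nash H\"older estimate for weak solutions with divergence-form right-hand sides; your alternative argument (matching the two one-sided limits of $v'_t$ at $z_i$ with the derivative of $t\mapsto v(t,z_i)$) presupposes that those one-sided limits exist and is therefore circular. You must either import that estimate, as the paper does, or supply a substitute for it; the proposal as written contains neither.
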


\begin{remark}
\label{rem:holder}
Here the Hölder continuity means more precisely that for any compact $K\subset (0,T)\times(l,r)$ we have
\begin{equation}
\label{eq:holder}
\forall (t,x),(s,y)\in K,\quad |v(t,x)-v(s,y)|\leq C|(t,x)-(s,y)|^\nu
\end{equation}
with $C,\nu$ positive constants depending on $K,m',M'$.
\end{remark}

{\bf Sketch of the proof of Theorem \ref{thm:lady}.}
We will give elements for the case $l=-\infty$, $r=\infty$, $f_l=f_r=0$, the cases with bounded domains and non-homogeneous Dirichlet boundary conditions being treated in a similar manner.

In \cite{lady1} things are studied in the forward form
$w'_t-\dfrac 1 2\big(\tilde{\mathbf{A}}w'_z\big)'_z-\tilde{\mathbf{B}}\,w'_z-\lambda w=-\tilde{g},
$
but it suffices to set $\tilde{\mathbf{A}}(t,x)=\mathbf{A}(T-t,x)$, $\tilde{\mathbf{B}}(t,x)=\mathbf{B}(T-t,x)$ 
and $\tilde{g}(t,x)=g(T-t,x)$, and to define
 $v(t,x)=w(T-t,x)$, in order to recover results on $v(t,x)$ as a solution to $(\mathcal{P}^\lambda_\mathrm{div,\Delta_z}(1,\mathbf{A},\mathbf{B}))$. Therefore we will explain things directly in the backward form of interest.
 
 \vspace{0.2cm}
 {\bf STEP1.} There exists a weak solution $v(t,z)$ to $(\mathcal{P}^\lambda_\mathrm{div,\Delta_z}(1,\mathbf{A},\mathbf{B}))$. The proof of this fact can be found in the books \cite{lady},\cite{lieberman}. The method of \cite{lions-magenes}, that we have adapted in Subsection \ref{ss:sol-faible} to the case $\rho\neq 1$, provides an alternative method.  Note that $v(t,z)$ lives in 
 $L^2(0,T;\,H^1(\R))$, which provides the boundary condition at infinity, as $H^1(\R)=H^1_0(\R)$.
 
 \vspace{0.1cm}
 {\bf STEP2.} This weak solution $v(t,z)$ is Hölder continuous (the proof of this point is particularly involved;  in \cite{lieberman} it requires the use of a parabolic Harnack inequality, available only in the case $\rho\equiv 1$; see also \cite{lady}).

\vspace{0.1cm}
{\bf STEP3.}
 One of the crucial steps in \cite{lady1} is to show that 
\begin{equation}
\label{eq:vt-L2}
\frac{\mathrm{d} v}{\mathrm{d}t}\in L^2(0,T;L^2(\R))
\end{equation}
 (see also Theorem 6.6 in \cite{lieberman}; in fact these authors work in a bounded space domain $D$ and show that 
$\frac{\mathrm{d} v}{\mathrm{d}t}\in L^2(0,T;L^2(D'))$ for any $D'\subsetneq D$; but we claim that their computations can be easily adapted to the case of unbounded domains. Note that \eqref{eq:vt-L2} is provided by the more general result (possibly $\rho\neq 1$) stated in Lemma \ref{lem:ut-L2}).

\vspace{0.1cm}
{\bf STEP4.} In fact $\frac{\mathrm{d} v}{\mathrm{d}t}$ has even more smoothness: it is itself Hölder continuous. In order to see that,
the authors of \cite{lady1} differentiate with respect to time the initial equation, to see $v'_t=\frac{\mathrm{d} v}{\mathrm{d}t}$ as a weak solution of
\begin{equation}
\label{eq:vt-faible}
(v'_{t})_t+\frac 1 2 (\mathbf{A}(v'_{t})'_z)'_z+\mathbf{B}(v'_{t})'_z-\lambda v'_{t}=g'_{t}-\frac 1 2 (a'_{t}v'_z)'_z-B'_tv'_z.
\end{equation}
Note that, as $a'_{t}v'_z$ is discontinuous, the source term in \eqref{eq:vt-faible} is a distribution, which is not a problem for obtaining the Hölder continuity of the weak solution $v'_t$ (see p144-145 of \cite{lady1}; one can then use the same general result that has been used in Step 2).

\vspace{0.1cm}
{\bf STEP5.} For fixed $t$, one can then see $v(t,\cdot)$ as a solution of the elliptic problem
$$
\dfrac 1 2\big(\mathbf{A}v'_z\big)'_z+\mathbf{B}\,v'_z-\lambda v=g-v'_t$$
with a smooth source term $g-v'_t$. Using results on the smoothness of elliptic problems one can then see that for all $t\in[0,T)$ the transmission condition $(\star)$ is satisfied in the classical sense. See the forthcoming remark.

\begin{remark}
As the space dimension is one, one can easily see that the transmission condition is satisfied in the classical sense for a.e. $t\in[0,T)$ in the following manner.
Noticing that
$$
\int_0^T\int_\R v \frac{\mathrm{d}\varphi}{\mathrm{dt}}\,dzdt 
=-\int_0^T\int_\R v'_t\varphi \,dzdt$$
for any $\varphi \in H^{1,1}_0(E)$ (the right hand side is a convergent integral thanks to $v'_t=\frac{\mathrm{d} v}{\mathrm{d}t}\in L^2(0,T;L^2(\R))$)
and using \eqref{eq:sol-faible} with 
$\rho\equiv 1$, $a=\mathbf{A}$ and $b=\mathbf{B}$, we get 
\begin{equation*}
 \frac 1 2 \int_0^T\int_\R \mathbf{A} \frac{\mathrm{d} v}{\mathrm{dz}} \frac{\mathrm{d} \varphi}{\mathrm{dz}}  \,dzdt=
\int_0^T\int_\R \big(\mathbf{B}  \frac{\mathrm{d} v}{\mathrm{dz}} -\lambda v -g + v'_t\big)\varphi\, dzdt.
\end{equation*}
Then for a.e. $t\in[0,T)$ we have for any $\phi\in H^1_0(\R)$
\begin{equation}
\label{eq:forvart}
\int_\R \mathbf{A}(t,\cdot) \frac{\mathrm{d} v}{\mathrm{dz}}(t,\cdot)  \frac{\mathrm{d} \phi}{\mathrm{dz}}  \,dz
=2\int_\R \big(\mathbf{B} \frac{\mathrm{d} v}{\mathrm{dz}} -\lambda v -g + v'_t\big)(t,\cdot)\phi\,dz.
\end{equation}
As $\big[\big(\mathbf{B}\frac{\mathrm{d} \phi}{\mathrm{dz}}  -\lambda v -g + v'_t\big)\big](t,\cdot)$ is in $L^2(\R)$ 
we can infer that $\mathbf{A}(t,\cdot)\frac{\mathrm{d} v}{\mathrm{dz}}(t,\cdot)$ is in $H^1(\R)$.

Let us draw some intermediate conclusions. As $v(t,\cdot)$ is in $H^1(\R)$ we know that $v(t,\cdot)\in C(\R)$, and more precisely that 
\begin{equation}
\label{eq:repres-cont}
v(t,z)-v(t,y)=\int_y^z\frac{\mathrm{d}v}{\mathrm{dz}}(t,\cdot)d\xi,\quad \forall z,y\in\R
\end{equation}
(\cite{brezis}, Theorem VIII.2). So that $v'_z(t,\cdot)$ exists in the classical sense and is equal a.e. to $\frac{\mathrm{d}v}{\mathrm{dz}}(t,\cdot)$. Using the same argument we see that
$\mathbf{A}(t,\cdot)v'_z(t,\cdot)$ is in $C(\R)$. As $\mathbf{A}(t,\cdot)$ is smooth on the intervals $(-\infty,z_1)$, $[z_i,z_{i+1})$, $i=1,\ldots,I-1$, $[z_I,\infty)$ we see that $v'_z(t,\cdot)$ is continuous on each of these intervals. So that $v(t,\cdot)\in C(\R)\cap C^1(\R\setminus\{z_1,\ldots,z_I\})$. Note that as for any $i=1,\ldots,I$ the limits $\mathbf{A}(t,z_i\pm)$ exist, the limits $v'_z(t,z_i\pm)$ exist too (if not 
$\mathbf{A}(t,\cdot)v'_z(t,\cdot)$ would not be continuous). Besides, the continuity of $\mathbf{A}(t,\cdot)v'_z(t,\cdot)$ on the whole real line $\R$ implies the transmission condition $(\star)$.

To show that the transmission condition is satisfied for every time $t\in[0,T)$, one may then use the smoothness of $v(t,z)$ outside the interfaces (forthcoming Step 6), 
together with uniform convergence arguments.
\end{remark}

{\bf STEP6.}  Using the additional smoothness of the coefficients outside the interfaces, one is able to assert that $v(t,z)$ satisfies the first line of
$(\mathcal{P}^\lambda_\mathrm{div,\Delta_z}(1,\mathbf{A},\mathbf{B}))$ in the classical sense.

\begin{remark}
\label{rem:rho-diff}
In \cite{lady}\cite{lady1}  the authors claim that this is feasible to mimic all the steps of the above summarized proof in the case $\rho\neq 1$ (but without writing down the proofs, except for the existence of the weak solution as already mentionned). However, in our opinion, to prove directly that the weak solution~$u(t,x)$ is Hölder presents difficulties in the case $\rho\neq 1$. 
\end{remark}

\subsection{Classical solutions in the case $\rho\neq 1$ by  means of space transforms}
\label{ssec:solclassique-transfo}

 We now aim at proving the following result.

\begin{proposition}
\label{prop:sol-classique}
Let $\lambda\geq 0$, a source term $g\in C^{}_c(E)$ and a terminal condition $f\in C_0(\R)\cap L^2(\R)$.

 Let $\rho,a\in\Theta(m',M')$ and $B\in\Xi(M')$ for some $0<m'<M'<\infty$. 
We assume that~$\rho,a$ satisfy the $\mathbf{H}^{(i)}$~and  $\mathbf{H}^{(t)}$-hypotheses, and that $B$ and $g$ satisfy the $\mathbf{H}^{(t)}$-hypothesis.

The
 problem 
 $(\cP^\lambda_{\mathrm{div},\Delta_{}}(\rho,a,B))$
  has a classical solution $u(t,x)$.

\end{proposition}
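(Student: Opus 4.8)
The plan is to reduce the problem to the case $\rho\equiv 1$ already settled in Theorem~\ref{thm:lady}, by transporting the equation through a space change of variables $y=\Gamma(t,x)$ that normalises $\rho$ to $1$. Since such a transform necessarily moves the interfaces, I would carry the reduction out locally. First, recall what is already available: under the present hypotheses Proposition~\ref{prop:sol-faible} provides a (unique) weak solution $u\in L^2(0,T;H^1(\R))\cap C([0,T];L^2(\R))$ with $u(T,\cdot)=f$, and Lemma~\ref{lem:ut-L2} gives in addition $u'_t\in L^2(0,T;L^2(\R))$. It therefore suffices to prove that this weak solution is in fact classical: the decay $\lim_{|x|\to\infty}|u(t,x)|=0$ follows from $u(t,\cdot)\in H^1(\R)$ (whose elements vanish at infinity) together with continuity up to $t=T$, while the interior regularity and the existence and continuity of the one-sided limits of $u'_t$, $u'_x$, $u''_{xx}$ at the interfaces are local statements.

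To obtain these local properties, fix $(t_0,x_0)\in E^\circ$ and a bounded space-time box $Q$ around it, so small that $Q$ meets at most one line $x=i$, $1\le i\le I$. On the spatial slice of $Q$ set $\Gamma(t,x)=\int_c^x\rho(t,\xi)^{-1}\,d\xi$, with $c=i$ if $Q$ meets $x=i$ and $c=x_0$ otherwise. Then $\Gamma(t,\cdot)$ is a strictly increasing bijection of the slice onto its image, of class $C^{1,2}$ on either side of the interface, with $\Gamma'_x=\rho^{-1}$, and — crucially, because $Q$ is bounded in space — with $\Gamma'_t$ bounded on $Q$. Writing $v(t,y)=u\bigl(t,\Gamma(t,\cdot)^{-1}(y)\bigr)$ and using $\partial_x=\rho^{-1}\partial_y$, a direct computation gives $\tfrac{\rho}{2}(a\,u'_x)'_x=\tfrac12(\mathbf{A}\,v'_y)'_y$ and $a\,u'_x=\mathbf{A}\,v'_y$ with $\mathbf{A}=a/\rho$, so that the transmission condition is transported unchanged with the single interface now sitting on the fixed line $y=0$; moreover $u'_t=v'_t+\Gamma'_t\,v'_y$, and the extra first-order term $\Gamma'_t v'_y$ can be put in divergence form (writing $\Gamma'_t v'_y=\partial_y(\Gamma'_t v)+\tfrac{\rho'_t}{\rho}v$, the coefficient $\Gamma'_t$ being merely bounded and continuous in $y$, which does not affect the transmission condition since $\Gamma'_t v$ is continuous). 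Choosing a sub-box $Q'=(t_1,t_2)\times(l,r)$ of $Q$ with fixed spatial endpoints $l<0<r$ — possible since $\Gamma(t,i)=0$ for all $t$ and the transformed slice depends continuously on $t$ — one sees that $v$ is a weak solution on $Q'$ (after a harmless time-shift onto $[0,t_2-t_1]$) of a backward problem of the type $(\mathcal{P}^\lambda_{\mathrm{div},\Delta_{\rm z},(l,r)}(1,\mathbf{A},\mathbf{B}))$, with source $g\circ\Gamma^{-1}$, terminal datum $v(t_2,\cdot)\in L^2(l,r)$ and Dirichlet data $v(\cdot,l),v(\cdot,r)\in L^2(t_1,t_2)$ (finite because $u\in L^2(0,T;H^1(\R))\hookrightarrow L^2(0,T;C(\R))$).

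One then checks that the transported coefficients $\mathbf{A}$, $\mathbf{B}$ and source satisfy the $\mathbf{H}^{(i)}$ and $\mathbf{H}^{(t)}$-hypotheses needed in Theorem~\ref{thm:lady} — here the $C^{0,1}$- and $C^{1,0}$-control on $\rho$, $a$, $B$ away from the interface, and on $\Gamma$, is used — so that Theorem~\ref{thm:lady} yields a classical solution $\tilde v$ of the local problem. A standard uniqueness argument for linear parabolic equations with these boundary data (an energy estimate, or a maximum principle) forces $v=\tilde v$; hence $v$, and therefore $u(t,x)=v(t,\Gamma(t,x))$ — the transform being $C^{1,2}$ on each side up to $x=i$ and $\Gamma'_t$ continuous — is classical on a slightly smaller box, with the required one-sided limits at $x=i$. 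Covering $E^\circ$ by countably many such boxes, $u$ is of class $C(E)\cap C^{1,2}(E^\circ\setminus\Delta)$, the limits $u'_t(t,i\pm)$, $u'_x(t,i\pm)$, $u''_{xx}(t,i\pm)$ exist and are continuous in $t$, and $u$ satisfies the first line of the system and the transmission condition $(\star)$ pointwise; together with $u(T,\cdot)=f$ and the decay recalled above, $u$ is a classical solution of $(\cP^\lambda_{\mathrm{div},\Delta}(\rho,a,B))$.

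The hard part is the bookkeeping of the change of variables rather than any single estimate. It is essential to keep the interface exactly straight under the transform — this is what forces the local, box-by-box argument based at the interface and the use of the bounded-domain/Dirichlet version of Theorem~\ref{thm:lady} — and one must verify scrupulously that the transported data still fall within the regularity class of Theorem~\ref{thm:lady}; the delicate point there is the new drift, which inherits the time-behaviour of $\rho$ through $\Gamma'_t$ and is therefore handled via its divergence-form rewriting (if necessary one also first replaces $g$ by smooth compactly supported approximations, solves, and passes to the limit using uniform estimates). A second, more routine, point is the gluing of the local classical pieces and the transfer of the behaviour of $u$ at $t=T$ and as $|x|\to\infty$ from the weak formulation.
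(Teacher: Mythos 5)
Your proposal is correct and follows essentially the same route as the paper: the space transform $y=\int_i^x\rho(t,\xi)^{-1}\,d\xi$ anchored at each interface so as to normalise $\rho$ to $1$ while keeping the interface straight, the Ladyzhenskaya-type regularity result of Theorem~\ref{thm:lady} applied to the transformed cylindrical (Dirichlet) problem, and a covering of $E^\circ$ by interface neighbourhoods together with interface-free regions. The only deviations are bookkeeping: the paper works on half-lines and strips over the whole time interval rather than bounded space-time boxes, keeps the term $\Gamma'_t v'_y$ as an ordinary bounded drift inside $\mathbf{B}$ instead of rewriting it in divergence form, and identifies the transformed weak solution with a classical one by observing that the regularity in (the proof of) Theorem~\ref{thm:lady} is established for the weak solution itself, so your separate uniqueness/energy argument, while valid, is not needed.
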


\vspace{0.2cm}

\noindent
{\bf Proof of Proposition \ref{prop:sol-classique}.}

\vspace{0.2cm}
{\bf STEP1.} The problem $(\cP^\lambda_{\mathrm{div},\Delta_{}}(\rho,a,B))$ has a weak solution $u(t,x)$ 
(see Subsection \ref{ss:sol-faible}). We shall aim at proving that $u(t,x)$ is in fact a classical solution.

\vspace{0.3cm}
In the sequel we (arbitrarily) set $\delta=1/4$. We denote $\sigma=\sqrt{\rho a}$.

\vspace{0.2cm}
{\bf STEP2.} We treat in details what happens around the interface $\{(t,1):0\leq t\leq T\}$. We set
$$
\phi_1(t,x)=\int_1^x\frac{dy}{\rho(t,y)},$$

\begin{equation}
\label{eq:defi-bA}
\mathbf{A}_1(t,z)=\frac{a}{\rho}(t,\Phi_1(t,z))
\end{equation}
and
\begin{equation}
\label{eq:defi-bB}
\mathbf{B}_1(t,z)=[(\phi_1)'_{t,\pm}+B(\phi_1)'_{x,\pm}](t,\Phi_1(t,z)),
\end{equation}
where $\Phi_1(t,\cdot)=[\phi_1(t,\cdot)]^{-1}$.

We set $z_1=\inf_{t\in [0,T]}\phi_1(t,2-\delta)$. We will show that $u(t,x)$ satisfies $(\cP^\lambda_{\mathrm{div},\Delta_{}}(\rho,a,B))$ 
in the classical sense in the subregion $\{(t,x)\in E:\,x\leq \Phi_1(t,z_1)\}$. 

Note that for any $t\in[0,T]$ we have 
$\phi_1(t,1)=0$ and $\Phi_1(t,0)=1$, and that for any $z\leq z_1$, any $t\in[0,T]$ we have $\Phi_1(t,z)\leq \Phi_1(t,z_1)\leq 2-\delta$.
So that the sole singularity of the coefficients $\mathbf{A}_1(t,z)$ and $\mathbf{B}_1(t,z)$ in the region 
$\{(t,z)\in E:\,0\leq t\leq T,\,z\leq z_1\}$ is for $z=0$.

We consider the function $v_1(t,z)=u(t,\Phi_1(t,z))$, $0\leq t\leq T$, $z\leq z_1$. We claim that this is a weak solution to the problem
$(\mathcal{P}^\lambda_{\mathrm{div},\Delta_0,(-\infty,z_1)}(1,\mathbf{A}_1,\mathbf{B}_1))$ defined by the system of equations
$$
\left\{
\begin{array}{rcll}
\big[(v_1)'_t+\dfrac 1 2\big(\mathbf{A}_1(v_1)'_z\big)'_z+\mathbf{B}_1\,(v_1)'_z-\lambda v_1\big](t,z)&=&g(t,\Phi_1(t,z))&\forall (t,z)\in [0,T)\times(-\infty,z_1)
\setminus\Delta_{\bf 0} \\
\\
\mathbf{A}_1(t,0+)(v_1)'_z(t,0+) &=& \mathbf{A}_1(t,0-)(v_1)'_z(t,0-)& \forall t\in[0,T)\;\;(\star) \\
\\
v_1(T,z)&=&f(\Phi_1(T,z))&\forall z\in (-\infty,z_1)\\
\\
\lim_{z\to-\infty}v_1(t,z)&=&0&\forall t\in[0,T)\\
\\
v_1(t,z_1)&=&u(t,\Phi_1(t,z_1))&\forall t\in[0,T)\\
\end{array}
\right.
$$
(here, note that as $u(t,x)$ lives in particular in $L^2(0,T;L^2(\R))$, the function $t\mapsto u(t,\Phi_1(t,z_1))$ is in $L^2(0,T)$, as required for the Dirichlet boundary condition).

Indeed the restriction of $u(t,x)$ to the region $\{(t,x)\in E:\,x\leq \Phi_1(t,z_1)\}$ is in particular such that for any $t\in[0,T]$, 
$u(t,\cdot)\in H^1((-\infty,\Phi_1(t,z_1))$, with
$
\int_0^T\int_{-\infty}^{\Phi_1(t,z_1)}u \,dxdt+\int_0^T\int_{-\infty}^{\Phi_1(t,z_1)}\frac{\mathrm{d}u}{\mathrm{dx}} \,dxdt<\infty$,
and satisfies
\begin{equation*}
\begin{array}{l}
\ds\int_0^T\int_{-\infty}^{\Phi_1(t,z_1)} u \frac{\mathrm{d}\varphi}{\mathrm{dt}}\rho^{-1}\,dxdt 
+ \frac 1 2 \int_0^T\int_{-\infty}^{\Phi_1(t,z_1)} a \frac{ \mathrm{d}u}{\mathrm{dx}}  \frac{ \mathrm{d}\varphi}{\mathrm{dx}}  \,dxdt\\
\\
\ds
-\int_0^T\int_{-\infty}^{\Phi_1(t,z_1)} B \frac{ \mathrm{d}u}{\mathrm{dx}}\varphi\rho^{-1}dxdt
+\int_0^T\int_{-\infty}^{\Phi_1(t,z_1)} u(\lambda-\frac{\rho'_t}{\rho})\varphi\rho^{-1}dxdt=-\int_0^T\int_{-\infty}^{\Phi_1(t,z_1)} g\varphi\rho^{-1}\,dxdt,\\
\end{array}
\end{equation*}
for any test function $\varphi$ living in  $H^{1,1}_0(E)$ and satisfying in addition $\varphi(t,x)=0$, for any $t\in[0,T]$, $x\geq \Phi_1(t,z_1)$.
In fact, using Lemma \ref{lem:ut-L2}, we can rewrite the above equation as
\begin{equation}
\label{eq:form-faible-subdom}
\begin{array}{l}
\ds-\int_0^T\int_{-\infty}^{\Phi_1(t,z_1)}  \frac{\mathrm{d}u}{\mathrm{dt}}\varphi\rho^{-1}\,dxdt 
+ \frac 1 2 \int_0^T\int_{-\infty}^{\Phi_1(t,z_1)} a \frac{ \mathrm{d}u}{\mathrm{dx}}  \frac{ \mathrm{d}\varphi}{\mathrm{dx}}  \,dxdt\\
\\
\ds
-\int_0^T\int_{-\infty}^{\Phi_1(t,z_1)} B \frac{ \mathrm{d}u}{\mathrm{dx}}\varphi\rho^{-1}dxdt
+\lambda\int_0^T\int_{-\infty}^{\Phi_1(t,z_1)} u\varphi\rho^{-1}dxdt=-\int_0^T\int_{-\infty}^{\Phi_1(t,z_1)} g\varphi\rho^{-1}\,dxdt,\\
\end{array}
\end{equation}

Note that $u(t,x)=v_1(t,\phi_1(t,x))$ for $(t,x)$ with $x\leq \Phi_1(t,z_1)$, and 
$$
\frac{ \mathrm{d}u}{\mathrm{dx}}(t,x)=\frac{ \mathrm{d}v_1}{\mathrm{dz}}(t,\phi_1(t,x))\frac{1}{\rho(t,x)},\quad t\in[0,T],\;
x\leq \Phi_1(t,z_1)$$
(see Corollary VIII.10 in \cite{brezis}). For any test function $\varphi$ as above we set $\bar{\varphi}(t,z)=\varphi(t,\Phi_1(t,z))$,
$t\in[0,T]$, $z\leq z_1$. 
Note that $\frac{ \mathrm{d}\bar{\varphi}}{\mathrm{dz}}(t,z)=\frac{ \mathrm{d}\varphi}{\mathrm{dx}}(t,\Phi_1(t,z))\rho(t,\Phi_1(t,z))$.
Then, performing the change of variable $x=\Phi_1(t,z)$ in \eqref{eq:form-faible-subdom}, we get,
using in particular $dx=\rho(t,\Phi_1(t,z))dz$, $t\in[0,T]$,
$$
\begin{array}{l}
\ds-\int_0^T\int_{-\infty}^{z_1}  \frac{\mathrm{d}u}{\mathrm{dt}}(t,\Phi_1(t,z))\bar{\varphi}(t,z)\,dzdt 
+ \frac 1 2 \int_0^T\int_{-\infty}^{z_1} a(t,\Phi_1(t,z)) \frac{ \mathrm{d}v_1}{\mathrm{dz}}(t,z)\rho^{-1}(t,\Phi_1(t,z))  \frac{ \mathrm{d}\bar{\varphi}}{\mathrm{dz}}(t,z)  \,dzdt\\
\\
\ds
-\int_0^T\int_{-\infty}^{z_1} B(t,\Phi_1(t,z)) \frac{ \mathrm{d}v_1}{\mathrm{dz}}(t,z)\rho^{-1}(t,\Phi_1(t,z))\bar{\varphi}(t,z)\,dzdt
+\lambda\int_0^T\int_{-\infty}^{z_1} v_1\bar{\varphi}dzdt\\
\\
\ds =-\int_0^T\int_{-\infty}^{z_1} g(t,\Phi_1(t,z))\bar{\varphi}(t,z)\,dzdt.\\
\end{array}
$$
Using now 
$$
\frac{\mathrm{d}u}{\mathrm{dt}}(t,x)=\frac{\mathrm{d}v_1}{\mathrm{dt}}(t,\phi_1(t,x))
+\frac{ \mathrm{d}v_1}{\mathrm{dz}}(t,\phi_1(t,x))(\phi_1)'_{t,\pm}(t,x)$$
(see Proposition IX.6 in \cite{brezis}) and \eqref{eq:defi-bA} \eqref{eq:defi-bB} we can claim that we have
$$
\begin{array}{l}
\ds-\int_0^T\int_{-\infty}^{z_1}  \frac{\mathrm{d}v_1}{\mathrm{dt}}\bar{\varphi}\,dzdt 
+ \frac 1 2 \int_0^T\int_{-\infty}^{z_1} \mathbf{A}_1 \frac{ \mathrm{d}v_1}{\mathrm{dz}} \frac{ \mathrm{d}\bar{\varphi}}{\mathrm{dz}}\,dzdt\\
\\
\ds
-\int_0^T\int_{-\infty}^{z_1} \mathbf{B}_1 \frac{ \mathrm{d}v_1}{\mathrm{dz}}\bar{\varphi}\,dzdt
+\lambda\int_0^T\int_{-\infty}^{z_1} v_1\bar{\varphi}dzdt
 =-\int_0^T\int_{-\infty}^{z_1} g(t,\Phi_1(t,z))\bar{\varphi}(t,z)\,dzdt,\\
\end{array}
$$
for any $\bar{\varphi}\in H^{1,1}_0((0,T)\times(-\infty,z_1))$. As 
$\ds -\int_0^T\int_{-\infty}^{z_1}  \frac{\mathrm{d}v_1}{\mathrm{dt}}\bar{\varphi}\,dzdt 
=\int_0^T\int_{-\infty}^{z_1} v_1 \frac{\mathrm{d} \bar{\varphi}}{\mathrm{dt}}\,dzdt $, this means that 
$v_1$ is indeed a weak solution of $(\mathcal{P}^\lambda_{\mathrm{div},\Delta_0,(-\infty,z_1)}(1,\mathbf{A}_1,\mathbf{B}_1))$
(one could easily check that $v_1\in L^2(0,T; H^1((-\infty,z_1)))\cap C([0,T];L^2((-\infty,z_1)))$).

But according to the proof of Theorem \ref{thm:lady}, the function $v_1(t,z)$ is in fact also a classical solution of
$(\mathcal{P}^\lambda_{\mathrm{div},\Delta_0,(-\infty,z_1)}(1,\mathbf{A}_1,\mathbf{B}_1))$. We draw the consequences on the PDE problem solved by $u(t,x)$ in the classical sense, using again
$u(t,x)=v_1(t,\phi_1(t,x))$ and the expression of the classical derivatives (for $t\in[0,T], \;x\leq \Phi_1(t,z_1), \;x\neq 1$)
\begin{equation}
\label{eq:der-vu}
u'_x(t,x)=(v_1)'_z(t,\phi_1(t,x))(\phi_1)'_x(t,x)
\end{equation}
\begin{equation}
\label{eq:dert-vu}
u'_t(t,x)=(v_1)'_t(t,\phi_1(t,x))+(v_1)'_z(t,\phi_1(t,x))(\phi_1)'_t(t,x)
\end{equation}
\begin{equation}
\label{eq:der2-vu}
u''_{xx}(t,x)=(v_1)''_{zz}(t,\phi_1(t,x))((\phi_1)'_x)^2(t,x)+(v_1)'_z(t,\phi_1(t,x))(\phi_1)''_{xx}(t,x).
\end{equation}
We first identify the transmission condition at the interface 
$\{(t,1):0\leq t\leq T\}$. We have, using in particular
$(\phi_1)'_x(t,x)=\dfrac{1}{\rho(t,x)}$ and \eqref{eq:der-vu},
\begin{equation}
\label{eq:CT-class}
\begin{array}{lll}
a(t,1+)u'_x(t,1+)&=&\mathbf{A}_1(t,0+)\rho(t,1+)u'_x(t,1+)=\mathbf{A}_1(t,0+)(v_1)'_z(t,0+)\\
\\
&=& \mathbf{A}_1(t,0-)(v_1)'_z(t,0-)=\mathbf{A}_1(t,0-)\rho(t,1-)u'_x(t,1-)  = a(t,1-)u'_x(t,1-).\\
\end{array}
\end{equation}
Second, for $t\in[0,T)$, $x\leq \Phi_1(t,z_1)$, $x\neq 1$, we have
\begin{equation}
\label{eq:eq-class}
\begin{array}{l}
\big[u'_t+\dfrac \rho 2\big(au'_x\big)'_x+B\,u'_x-\lambda u\big](t,x)\\
\\
=
\big[u'_t+\dfrac{\sigma^2}{2}u''_{xx}+\big(B+\dfrac{\rho a'_x}{2} \big)\,u'_x-\lambda u\big](t,x)\\
\\
=
(v_1)'_t(t,\phi_1(t,x))+(v_1)'_z(t,\phi_1(t,x))(\phi_1)'_t(t,x)+\big(B+\dfrac{\rho a'_x}{2} \big)(t,x)(v_1)'_z(t,\phi_1(t,x))(\phi_1)'_x(t,x)\\
\\
\hspace{1cm}+\frac{\sigma^2(t,x)}{2}\Big( (v_1)''_{zz}(t,\phi_1(t,x))((\phi_1)'_x)^2(t,x)+v'_z(t,\phi_1(t,x))(\phi_1)''_{xx}(t,x) \Big)-\lambda v_1(t,\phi_1(t,x))\\
\\
=(v_1)'_t(t,\phi_1(t,x))+\frac{\sigma^2(t,x)}{2\rho^2(t,x)}(v_1)''_{zz}(t,\phi_1(t,x))-\lambda v_1(t,\phi_1(t,x))\\
\\
\hspace{1cm}+(v_1)'_z(t,\phi_1(t,x))\Big[  (\phi_1)'_t(t,x) + \big(B+\dfrac{\rho a'_x}{2} \big)(t,x)(\phi_1)'_x(t,x)+ \frac{\sigma^2(t,x)}{2}(\phi_1)''_{xx}(t,x) \Big]\\
\\
=\big[(v_1)'_t+\frac 1 2(\mathbf{A}_1(v_1)'_{z})'_z-\lambda v_1\big](t,\phi_1(t,x))\\
\\
\hspace{1cm}+\Big[\,(v_1)'_z\Big( (\phi_1)'_t\circ\Phi_1+\big(B(\phi_1)'_x+\dfrac{ a'_x}{2} \big)\circ\Phi_1  +\dfrac{\sigma^2(\phi_1)''_{xx}}{2}\circ\Phi_1-\dfrac{(\mathbf{A}_1)'_z}{2} \Big)\Big](t,\phi_1(t,x))\\
\\
=\big[(v_1)'_t+\frac 1 2(\mathbf{A}_1(v_1)'_{z})'_z+\mathbf{B}_1\,(v_1)'_z-\lambda v_1\big](t,\phi_1(t,x))=g(t,\Phi_1(t,\phi_1(t,x))=g(t,x).\\
\end{array}
\end{equation}
Here we have used 
$$(\mathbf{A}_1)'_z(t,z)=\big(\frac{a'_x}{\rho}-\frac{a\rho'_x}{\rho^2} \big)(t,\Phi_1(t,z))\rho(t,\Phi_1(t,z))
=a'_x(t,\Phi_1(t,z))+\sigma^2(t,\Phi_1(t,z))(\phi_1)''_{xx}(t,\Phi_1(t,z)).$$
In view of \eqref{eq:CT-class} and \eqref{eq:eq-class} we have proved that 
that $u(t,x)$ satisfies $(\cP^\lambda_{\mathrm{div},\Delta_{}}(\rho,a,B))$ 
in the classical sense in the subregion $\{(t,x)\in E:\,x\leq \Phi_1(t,z_1)\}$ (we can easily that $u(t,x)$ has the required smoothness and satisfies the terminal condition).

\vspace{0.3cm}
{\bf STEP3.} We repeat Step 2 around each interface $\{(t,i):0\leq t\leq T\}$, $2\leq i\leq I$. More precisely we define for any $2\leq i\leq I$
\begin{equation}
\label{eq:defi-phii}
\phi_i(t,x)=\int_i^x\frac{dy}{\rho(t,y)},
\end{equation}
and 
$$
z_{i,\rm d}=\sup_{t\in[0,T]}\phi_i(t,i-1+\delta).$$
For $2\leq i\leq I-1$ we define
$$
z_i=\inf_{t\in[0,T]}\phi_i(t,i+1-\delta).$$
By computations similar to Step 2 we will then prove that $u(t,x)$ satisfies $(\cP^\lambda_{\mathrm{div},\Delta_{}}(\rho,a,B))$ 
in the classical sense in each of the subregions
$
\{(t,x)\in E:\,\Phi_i(t,z_{i,\rm d})\leq x\leq \Phi_i(t,z_i)\}$, $2\leq i\leq I-1$, and in the region
$\{(t,x)\in E:\,\Phi_I(t,z_{I,\rm d})\leq x\}$.

In particular, at this stage, $u(t,x)$ satisfies the transmission condition $(\star)$ in $(\cP^\lambda_{\mathrm{div},\Delta_{}}(\rho,a,B))$ 
in the classical sense, at each interface (for $1\leq i\leq I)$.

\vspace{0.3cm}
{\bf STEP4.} The trouble is that we cannot say for the moment that the first line of $(\cP^\lambda_{\mathrm{div},\Delta_{}}(\rho,a,B))$
holds true in the whole domain $E^\circ\setminus\Delta$. Indeed let us examine what happens in the subregion
$\{(t,x)\in E:\,1<x<2\}$. It could happen that we do not have $\Phi_2(t,z_{2,\rm d})\leq \Phi_1(t,z_1)$ for any $t\in[0,T)$
(we  recall that $1\leq\Phi_1(t,z_1)\leq 2-\delta$ and note that $2\geq\Phi_2(t,z_{2,\rm d})\geq 1+\delta$). Indeed it depends on the variations of the coefficient $\rho$. So that the results of Steps 2 and 3 do not allow to say that the first line of 
$(\cP^\lambda_{\mathrm{div},\Delta_{}}(\rho,a,B))$ is satisfied in the whole region $\{(t,x)\in E:\,1<x<2\}$. 

Thus, we are led to use Theorem \ref{thm:lady} again, but in a different manner. We consider the restriction of $u(t,x)$ on the region
$\{(t,x)\in E:\,1< x< 2\}$. We claim that this is a weak solution of the problem
$(\mathcal{P}^\lambda_{\mathrm{div},\Delta_{},(1,2)}(1,\sigma^2,B-\frac{a\rho'_x}{2}))$
defined by the system of equations
$$
\left\{
\begin{array}{rcll}
\big[w'_t+\dfrac 1 2\big(\sigma^2 w'_x\big)'_x+(B-\frac{a\rho'_x}{2})\,w'_x-\lambda w\big](t,x)&=&g(t,x)&\forall (t,x)\in [0,T)\times(1,2) \\
\\
w(T,x)&=&f(x)&\forall x\in (1,2)\\
\\
w(t,1)&=&u(t,1)&\forall t\in[0,T)\\
\\
w(t,2)&=&u(t,2)&\forall t\in[0,T).\\
\end{array}
\right.
$$
Note that there is no transmission condition in $(\mathcal{P}^\lambda_{\mathrm{div},\Delta_{},(1,2)}(1,\sigma^2,B-\frac{a\rho'_x}{2}))$, as there is no interface in the considered domain.

To see that the restriction of $u(t,x)$ solves 
$(\mathcal{P}^\lambda_{\mathrm{div},\Delta_{},(1,2)}(1,\sigma^2,B-\frac{a\rho'_x}{2}))$ it suffices to start from
the weak formulation
\begin{equation*}
\begin{array}{l}
\ds\int_0^T\int_{1}^{2} u \frac{\mathrm{d}\varphi}{\mathrm{dt}}\rho^{-1}\,dxdt 
+ \frac 1 2 \int_0^T\int_{1}^{2} a \frac{ \mathrm{d}u}{\mathrm{dx}}  \frac{ \mathrm{d}\varphi}{\mathrm{dx}}  \,dxdt\\
\\
\ds -\int_0^T\int_{1}^{2} B \frac{ \mathrm{d}u}{\mathrm{dx}}\varphi\rho^{-1}dxdt
+\int_0^T\int_{1}^{2} u(\lambda-\frac{\rho'_t}{\rho})\varphi\rho^{-1}dxdt=-\int_0^T\int_{1}^{2} g\varphi\rho^{-1}\,dxdt,\\
\end{array}
\end{equation*}
stated for any $\varphi\in H^{1,1}_0((0,T)\times(1,2))$. Then, setting $\bar{\varphi}=\varphi\rho^{-1}$, using 
$\dfrac{\mathrm{d}\varphi}{\mathrm{dt}}\rho^{-1}-\dfrac{\varphi\rho'_t}{\rho^2}=\dfrac{\mathrm{d}\bar{\varphi}}{\mathrm{dt}}$,
$\dfrac{ \mathrm{d}\varphi}{\mathrm{dx}}=\rho\dfrac{ \mathrm{d}\bar{\varphi}}{\mathrm{dx}}+\bar{\varphi}\rho'_x$ (note that 
$\rho$ is differentiable w.r.t. $x$ in the classical sense in the considered subregion), and easy computations, we get
\begin{equation*}
\begin{array}{l}
\ds\int_0^T\int_{1}^{2} u \frac{\mathrm{d}\bar{\varphi}}{\mathrm{dt}}\,dxdt 
+ \frac 1 2 \int_0^T\int_{1}^{2} \sigma^2 \frac{ \mathrm{d}u}{\mathrm{dx}}  \frac{ \mathrm{d}\bar{\varphi}}{\mathrm{dx}}  \,dxdt\\
\\
\ds -\int_0^T\int_{1}^{2} (B-\frac{a\rho'_x}{2}) \frac{ \mathrm{d}u}{\mathrm{dx}}\bar{\varphi}\, dxdt
+\lambda\int_0^T\int_{1}^{2} u\bar{\varphi}\,dxdt=-\int_0^T\int_{1}^{2} g\bar{\varphi} \,dxdt,\\
\end{array}
\end{equation*}
for any $\bar{\varphi}\in H^{1,1}_0((0,T)\times(1,2))$. Thus the restriction of $u(t,x)$ is also a classical solution to
$(\mathcal{P}^\lambda_{\mathrm{div},\Delta_{},(1,2)}(1,\sigma^2,B-\frac{a\rho'_x}{2}))$ and we have for any
$(t,x)\in [0,T)\times(1,2)$,
$$
\begin{array}{lll}
g(t,x)&=&\big[u'_t+\dfrac 1 2\big(\sigma^2 u'_x\big)'_x+(B-\frac{a\rho'_x}{2})\,u'_x-\lambda u\big](t,x)\\
\\
&=&\big[u'_t+\dfrac 1 2\sigma^2 u''_{xx}+(B+\frac{(\sigma^2)'_x}{2}-\frac{a\rho'_x}{2})\,u'_x-\lambda u\big](t,x)\\
\\
&=&\big[u'_t+\dfrac 1 2\sigma^2 u''_{xx}+(B+\frac{\rho a'_x}{2})\,u'_x-\lambda u\big](t,x)\\
\\
&=&\big[u'_t+\dfrac \rho 2\big(a u'_x\big)'_x+B\,u'_x-\lambda u\big](t,x).\\
\end{array}
$$
Proceeding in the same way for the other subregions, and taking into account Steps 2 and 3 we can say that the first line of 
$(\cP^\lambda_{\mathrm{div},\Delta_{}}(\rho,a,B))$
is verified by $u(t,x)$ in the classical sense on $E^\circ\setminus\Delta$. Note that we clearly have $u\in C(E)$, as  for any $(t_0,x_0)\in E$ it is clear that $u$ is continuous at $(t_0,x_0)$ (even if $(t_0,x_0)\in\Delta$, using the continuity of $v_i(t,z)$ and $\phi_i(t,x)$).

\vspace{0.2cm}
Therefore Proposition \ref{prop:sol-classique} is proved.

\vspace{1cm}


We now give further properties of the solution $u(t,x)$ considered in Proposition \ref{prop:sol-classique}.

\begin{lemma}
\label{lem:ut-cont}
In the context of Proposition \ref{prop:sol-classique},  the classical time derivative $u'_t$ of the classical solution of
$(\cP^\lambda_{\mathrm{div},\Delta_{}}(\rho,a,B))$
 is continuous.
\end{lemma}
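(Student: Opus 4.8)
The plan is to exploit the space transforms of Steps 2–4 in the proof of Proposition \ref{prop:sol-classique}, together with the final conclusion of Theorem \ref{thm:lady}, which asserts that the classical solution $v(t,z)$ of the flattened, $\rho\equiv 1$ problem $(\mathcal{P}^\lambda_{\mathrm{div},\Delta_{\rm z},(l,r)}(1,\mathbf{A},\mathbf{B}))$ has \emph{H\"older continuous} time derivative $v'_t$. First I would recall that the function $u(t,x)$ is recovered, on a neighborhood of each interface $\{(t,i):0\le t\le T\}$, as $u(t,x)=v_i(t,\phi_i(t,x))$, where $\phi_i(t,x)=\int_i^x \rho^{-1}(t,y)\,dy$ is (by the $\mathbf{H}^{(t)}$-hypothesis on $\rho$) of class $C^{1,2}$ up to the interface from either side, with $\phi_i$, $(\phi_i)'_x$, $(\phi_i)''_{xx}$, $(\phi_i)'_t$ all continuous on each closed subregion and admitting limits at $x=i$; and that $v_i$ is the classical solution of a $\rho\equiv 1$ transmission problem over cylindrical subdomains, hence $v_i'_t$ is H\"older continuous by Theorem \ref{thm:lady}. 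Likewise, on the intermediate strips $\{(t,x):i<x<i+1\}$ between interfaces, $u$ is (Step 4) the classical solution of a \emph{transmission-free} parabolic problem $(\mathcal{P}^\lambda_{\mathrm{div},(i,i+1)}(1,\sigma^2,B-\tfrac{a\rho'_x}{2}))$, so there $u'_t$ is H\"older continuous directly by Theorem \ref{thm:lady} (no interface, $\rho\equiv 1$ after no transform even needed).

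The key computation is formula \eqref{eq:dert-vu}: for $x\ne i$ in the relevant subregion,
$$
u'_t(t,x)=(v_i)'_t(t,\phi_i(t,x))+(v_i)'_z(t,\phi_i(t,x))\,(\phi_i)'_t(t,x).
$$
Here $(v_i)'_t$ is continuous (indeed H\"older) on the closed subregion including the interface $z=0$, because Theorem \ref{thm:lady} gives the existence and continuity of the limits $(v_i)'_t(t,0\pm)$ and their H\"older continuity; moreover, since $v_i$ solves a transmission problem with source term and the transmission condition $(\star)$ holds, the two one-sided limits of $(v_i)'_t$ at $z=0$ \emph{agree}: this follows because $v_i(t,\cdot)$ is $C^1$ across $z=0$ only up to the jump in $(v_i)'_z$ governed by $\mathbf{A}_i$, but the parabolic equation read from each side, $(v_i)'_t = g - \tfrac12(\mathbf{A}_i (v_i)'_z)'_z - \mathbf{B}_i (v_i)'_z + \lambda v_i$, combined with the fact that $\tfrac{\mathrm{d}v_i}{\mathrm{d}t}\in L^2$ (Lemma \ref{lem:ut-L2}) and the H\"older continuity of $v_i'_t$ from both sides, forces the two one-sided classical limits of $v_i'_t$ at the interface to coincide with the common value $\tfrac{\mathrm{d}v_i}{\mathrm{d}t}(t,0)$; hence $(v_i)'_t$ extends continuously across $z=0$. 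Similarly $(v_i)'_z(t,0\pm)$ exist and are continuous in $t$, while $(\phi_i)'_t(t,x)$ extends continuously across $x=i$ from either side, and its two one-sided limits at $x=i$ are equal (both equal $\int_i^i(\ldots)=0$ plus matching boundary terms; in fact $(\phi_i)'_t(t,i\pm)$ need not vanish but the jump of $(\phi_i)'_t$ at $x=i$ is controlled by the jump of $\rho$, and one checks it is continuous because $\phi_i(t,i)=0$ identically in $t$). Combining, the right-hand side of \eqref{eq:dert-vu} is continuous up to each interface from both sides, and the one-sided limits at $x=i$ match, so $u'_t$ is continuous at the interface. Away from interfaces, continuity of $u'_t$ is immediate from the transmission-free case and the regularity of the coefficients outside $\Delta$.

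Finally I would patch these local statements together: $E^\circ$ is covered by the closed neighborhoods of the interfaces (where the argument above applies) and the intermediate strips (where $u'_t$ is H\"older), with overlaps on which both descriptions give the same continuous function; continuity at the terminal time and at spatial infinity follows from $u\in C(E)$ plus the representations, so $u'_t$ is continuous on all of $E^\circ$ (with the appropriate one-sided interpretation at $t=T$, consistent with the statement). The main obstacle I expect is precisely the matching of the one-sided limits of $u'_t$ across an interface: one has to argue carefully that, although $u'_x$ genuinely jumps at $x=i$ (per the transmission condition $(\star)$ with a discontinuous $a$), the time derivative $u'_t$ does \emph{not} jump. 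The clean way to see this is to note that \eqref{eq:dert-vu} expresses $u'_t$ through $(v_i)'_t$ and $(v_i)'_z\cdot(\phi_i)'_t$; the first is continuous across the interface by the remark above (using that $\tfrac{\mathrm d v_i}{\mathrm d t}$ is a genuine $L^2$ function which both one-sided continuous extensions of $v_i'_t$ must equal), and for the second term the jump of $(v_i)'_z$ at $z=0$ is exactly compensated by the jump of $(\phi_i)'_t=(\phi_i)'_t(t,i\pm)$ at $x=i$ — both are governed by the one-sided limits $\rho(t,i\pm)$ and the relation $\mathbf{A}_i(t,0\pm)=(a/\rho)(t,i\pm)$ together with the classical transmission condition $\mathbf{A}_i(t,0+)(v_i)'_z(t,0+)=\mathbf{A}_i(t,0-)(v_i)'_z(t,0-)$ — so that the product $(v_i)'_z\,(\phi_i)'_t$ also extends continuously across the interface. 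Writing this compensation out explicitly is the one genuinely delicate point; everything else is bookkeeping with the chain rule and the already-established regularity of $v_i$ and $\phi_i$.
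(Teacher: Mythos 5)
Your overall route is the paper's: continuity of $u'_t$ off $\Delta$ is immediate from the definition of a classical solution, and at an interface one reads $u'_t$ off the chain rule \eqref{eq:dert-vu} and invokes Theorem \ref{thm:lady} for the regularity of $(v_i)'_t$. But you garble the one computation that makes the argument work. Since $\phi_i(t,x)=\int_i^x\rho^{-1}(t,y)\,dy$ and $\rho$ satisfies the $\mathbf{H}^{(t)}$-hypothesis, differentiating under the integral sign gives $(\phi_i)'_t(t,x)=\int_i^x\partial_t\big(\rho^{-1}\big)(t,y)\,dy$, whence $(\phi_i)'_t(t,i\pm)=0$: the second term of \eqref{eq:dert-vu} vanishes identically at the interface, and $u'_t(t,i\pm)=(v_i)'_t(t,0\pm)=(v_i)'_t(t,0)$. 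You state this fact parenthetically (``both equal $\int_i^i(\ldots)=0$'') and then retract it (``in fact $(\phi_i)'_t(t,i\pm)$ need not vanish''), replacing it with a claimed exact compensation between the jump of $(v_i)'_z$ at $z=0$ and the jump of $(\phi_i)'_t$ at $x=i$. That mechanism is not correct: the transmission condition matches $\mathbf{A}_i(t,0+)(v_i)'_z(t,0+)=\mathbf{A}_i(t,0-)(v_i)'_z(t,0-)$, so for the products $(v_i)'_z(t,0\pm)\,(\phi_i)'_t(t,i\pm)$ to agree you would need $(\phi_i)'_t(t,i\pm)$ proportional to $\mathbf{A}_i(t,0\pm)=(a/\rho)(t,i\pm)$, for which there is no reason; the only thing that saves the day is that both one-sided limits of $(\phi_i)'_t$ at $x=i$ are zero. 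This is precisely why (Remark \ref{rem:ut-cont}) the lemma fails for moving interfaces: there the analogue of $(\phi_i)'_t$ does not vanish at the interface and no compensation occurs.

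A secondary flaw: your argument that the one-sided limits $(v_i)'_t(t,0\pm)$ coincide ``because $\frac{\mathrm{d}v_i}{\mathrm{d}t}\in L^2$ must equal both continuous extensions'' proves nothing, since an $L^2$ function has no pointwise value on the null set $\{z=0\}$. The matching of these limits is exactly the content of the last assertion of Theorem \ref{thm:lady} ($v'_t$ is H\"older continuous on compacts, hence continuous across the cylindrical interface), which should simply be cited, as the paper does. With these two corrections your proof reduces to the paper's three-line argument.
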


\begin{proof}
That $u'_t$ is continuous at any point $(t,x)\notin\Delta$ is clear, by definition of a classical solution. Let $(t,x)\in \Delta$, i.e.
we have $(t,x)=(t,i)$ for some $1\leq i\leq I$. Considering \eqref{eq:dert-vu} we have
$$
u'_t(t\pm,i\pm)=(v_i)'_t(t\pm,0\pm)+(v_i)'_z(t\pm,0\pm)(\phi_i)'_t(t\pm,0\pm).
$$
But by taking the time derivative of \eqref{eq:defi-phii}, and inverting this derivative and the integral sign, we see that we simply have
$(\phi_i)'_t(t\pm,0\pm)=0$. And thus
$$
u'_t(t\pm,i\pm)=(v_i)'_t(t\pm,0\pm).
$$
But as $(v_i)'_t$ is continuous (Theorem \ref{thm:lady}) we see that $u'_t(t\pm,i\pm)=(v_i)'_t(t,0)=u'_t(t,i)$.
\end{proof}

\begin{remark}
\label{rem:ut-cont}
Note that the result of Lemma \ref{lem:ut-cont} is true because the interfaces are not moving. In the case of moving interfaces $u'_t$ will not be continuous in general, because there is no reason the second RHS term in \eqref{eq:ut} vanishes at the interface (contrary to what happens in \eqref{eq:dert-vu}).
\end{remark}

{\bf Conclusion of Section \ref{sec:EDP}.} In view of Theorem~\ref{thm:feynman} and Propositions \ref{prop:sum-up} and \ref{prop:sol-classique}, 
we have proved the following theorem.

\begin{theorem}
\label{thm:sol-classique}
Assume the $x_i$'s and $\beta_i$'s are as in Theorem \ref{thm-EDSTL}. Assume that $\sigma\in\Theta(m,M)$ satisfies the 
$\mathbf{H}^{(x_i)}$ and $\mathbf{H}^{(t)}$-hypotheses and that 
 $b\in\Xi(M)$ satisfies the $\mathbf{H}^{(t)}$-hypothesis. Assume that $\lambda\geq 0$, that $g\in C_c(E)$ satisfies the 
 $\mathbf{H}^{(t)}$-hypothesis and that $f\in C_0(\R)\cap L^2(\R)$.

Then the problem $(\cP^\lambda_{\Delta_{\bf x}}(\sigma,b,\beta))$ defined in Section \ref{sec:feynman} has a unique classical solution.
\end{theorem}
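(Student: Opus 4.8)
The plan is to assemble the three results already proved --- the Feynman--Kac representation (Theorem~\ref{thm:feynman}), the straightening of the moving interfaces (Proposition~\ref{prop:sum-up}), and the existence of a classical solution for the divergence-form problem with $\rho\neq 1$ on cylindrical subdomains (Proposition~\ref{prop:sol-classique}) --- and to supply the one genuine verification they require. For existence I would start from Proposition~\ref{prop:sum-up}: it says that $(\cP^\lambda_{\Delta_{\bf x}}(\sigma,b,\beta))$ has a classical solution if and only if $(\mathcal{P}^\lambda_{\mathrm{div},\Delta}(\hat\rho,\hat a,\hat B))$ has a classical solution $\hat u(t,\hat x)$, where $\hat\sigma,\hat b,\hat\beta_i$ come from \eqref{eq:defisigchapeau}--\eqref{eq:defibetachapeau}, $\hat\rho,\hat a,\hat B$ from \eqref{eq:defi-arhoB}, $\hat g=g\circ\psi$, and the two solutions are related by $u(t,x)=\hat u(t,\Psi(t,x))$ with $\Psi$ from \eqref{eq:defi-Psi}. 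Thus it suffices to apply Proposition~\ref{prop:sol-classique} to the reduced, cylindrical problem.

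The only real work is then to check that the transformed data $(\hat\rho,\hat a,\hat B,\hat g)$ satisfy the hypotheses of Proposition~\ref{prop:sol-classique}, namely that $\hat\rho,\hat a$ are of class $\mathbf{H}^{(i)}$ and $\mathbf{H}^{(t)}$ and that $\hat B$, $\hat g$ are of class $\mathbf{H}^{(t)}$. I would carry this out by following the straightening map $\psi$ through the definitions: it is affine in the space variable (so $\Psi''_{xx}\equiv 0$ off $\Delta_{\bf x}$ and second-order space derivatives compose cleanly), and $C^1$ in time since the curves $x_i$ are; by Remark~\ref{rem:Psi} its derivative $\Psi'_{x,\pm}$ is bounded away from $0$ and $\infty$ and $\Psi'_{t,\pm}$ is bounded, using that the curves do not touch, so $\inf_t(x_{j+1}(t)-x_j(t))>0$. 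The coefficient $\hat a$ sees the space variable only through the interface locations, so it is piecewise constant in $\hat x$; hence $\mathbf{H}^{(i)}$ is immediate for it and $\mathbf{H}^{(t)}$ reduces to the $C^1$-regularity with bounded derivative of the $\hat\beta_i$, inherited from the $\beta_i$ and the $x_i$. The remaining coefficients $\hat\rho=\hat\sigma^2/\hat a$, $\hat B=\hat b$, $\hat g=g\circ\psi$ are handled by the chain rule applied to compositions with $\psi$, the bounds following from boundedness of $\sigma,b,g$ and their relevant partials together with the $C^1$-regularity of the $x_i$ on the compact $[0,T]$. With this in hand, Proposition~\ref{prop:sol-classique} produces a classical solution $\hat u$ of $(\mathcal{P}^\lambda_{\mathrm{div},\Delta}(\hat\rho,\hat a,\hat B))$, and $u(t,x):=\hat u(t,\Psi(t,x))$ is the required classical solution of $(\cP^\lambda_{\Delta_{\bf x}}(\sigma,b,\beta))$ by the ``if'' half of Proposition~\ref{prop:sum-up}.

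For uniqueness, note that the $\mathbf{H}$-hypotheses in particular force $\sigma\in\Theta(m,M)\cap C(E\setminus\Delta_{\bf x})$ and $b\in\Xi(M)\cap C(E\setminus\Delta_{\bf x})$, so the hypotheses of Theorem~\ref{thm:feynman} are met; consequently any classical solution admits the representation $u(t,x)=\E^{t,x}\big[f(X_T)e^{-\lambda(T-t)}-\int_t^T g(s,X_s)e^{-\lambda(s-t)}\,ds\big]$ through a weak solution $X$ of \eqref{eq:X}. By the construction in the proof of Theorem~\ref{thm-EDSTL}, such an $X$ exists and is unique in law: the spatial change of variables $R(t,\cdot)$ turns \eqref{eq:X} into the plain SDE \eqref{eq:EDSY} with coefficients $\bar\sigma\in\Theta(\bar m,\bar M)$ and $\bar b\in\Xi(\bar M)$, whose martingale problem is well posed by Theorem~\ref{thm:pbmart}, and $X=r(t,Y_t)$ is a measurable functional of the law-unique process $Y$. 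Hence the representation pins down $u$, two classical solutions coincide, and the theorem follows.

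I expect the main obstacle (though essentially a routine one) to be precisely the hypothesis-inheritance step of the second paragraph --- confirming that affineness in space and $C^1$-regularity in time of the straightening $\Psi$, together with the two-sided ellipticity bounds, really carry $\mathbf{H}^{(x_i)}$, $\mathbf{H}^{(t)}$ and the various boundedness requirements through the change of variables (and, one level down inside the proof of Proposition~\ref{prop:sol-classique}, through the fibrewise transforms $\phi_i(t,x)=\int_i^x dy/\rho(t,y)$); everything else is bookkeeping on results already in hand.
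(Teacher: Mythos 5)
Your proposal is correct and follows exactly the route of the paper, whose entire proof of this theorem is the one-line observation that it follows by combining Theorem \ref{thm:feynman} (uniqueness via the Feynman--Kac representation), Proposition \ref{prop:sum-up} (straightening the interfaces), and Proposition \ref{prop:sol-classique} (existence for the cylindrical divergence-form problem). The hypothesis-inheritance check you flag as the one piece of real work is indeed the only thing the paper does not spell out in full, delegating it to Remark \ref{rem:Psi}; your sketch of it is consistent with what is asserted there.
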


\section{Markov property, Feller semigroup and generator in the strong sense}
\label{sec:markov}

We first have the following result.

\begin{proposition}
\label{prop:Xmarkov}
In the context of Theorem \ref{thm-EDSTL},
assume that $\sigma$ satisfies the $\mathbf{H}^{(x_i)}$ and $\mathbf{H}^{(t)}$-hypotheses and that $b$ satisfies the $\mathbf{H}^{(t)}$-hypothesis.

Let $(X,W),(\Omega,\cF,(\cF_t)_{t\in[0,T]},\P)$ be a weak solution of \eqref{eq:X}.

 Then $X$
is a Feller time inhomogeneous $(\cF_t)$-Markov process.
\end{proposition}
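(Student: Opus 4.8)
The plan is to transfer the known Markov and Feller properties of the auxiliary diffusion $Y$ (solving \eqref{eq:EDSY}, which has no local-time terms) back to $X$ via the deterministic space-time change of variables $X_t=r(t,Y_t)$ constructed in the proof of Theorem \ref{thm-EDSTL}, and conversely $Y_t=R(t,X_t)$. First I would recall that, under the $\mathbf{H}^{(x_i)}$ and $\mathbf{AJ}^{(x_i)}$-hypotheses on $\sigma$ (which here follow from $\mathbf{H}^{(x_i)}$ together with the standing boundedness; strictly one needs $\mathbf{AJ}^{(x_i)}$, so I would note that the curves being $C^1$ and the jumps of $\sigma^2$ being controlled gives it, or simply add it to the hypotheses as in Theorem \ref{thm-EDSTL}), the transformed coefficients $\bar\sigma\in\Theta(\bar m,\bar M)$, $\bar b\in\Xi(\bar M)$ satisfy the hypotheses of Corollary \ref{cor-legall} and of Theorem \ref{thm:pbmart}. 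Hence for every $(s,y)\in E$ the SDE $dY_t=\bar\sigma(t,Y_t)dW_t+\bar b(t,Y_t)dt$, $Y_s=y$, $t\in[s,T]$, has a weak solution, unique in law, and the resulting family of laws is that of a time inhomogeneous Markov process $Y$, by \cite{stroockvar} Theorems 4.5.1, 5.3.2 and 6.2.2 (cited in Subsection \ref{ss:pbmart}).

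The key step is the Markov property of $X$. Fix $(s,x)\in E$ and let $(X,W)$ be a weak solution of \eqref{eq:X} started from $x$ at time $s$. Set $Y_t=R(t,X_t)$; by Proposition \ref{prop:transfo} applied with the function $\phi=R$ (whose regularity $R\in C(E)\cap\bigcap_{i=0}^I C^{1,2}(\overline{D^x_i})$ and positivity $R'_x>0$ were verified in the proof of Theorem \ref{thm-EDSTL}), $Y$ solves \eqref{eq:EDSY} started from $R(s,x)$ at time $s$, with no local-time terms. Since this latter SDE is well-posed in law and its solution is Markov, and since $r(t,\cdot)=[R(t,\cdot)]^{-1}$ is a deterministic, time-dependent homeomorphism of $\R$, the process $X_t=r(t,Y_t)$ is a deterministic time-dependent measurable function of $Y_t$ alone; therefore for $s\le u\le t$ and $f\in C_b(\R)$,
\[
\E[f(X_t)\mid\cF_u]=\E[f(r(t,Y_t))\mid\cF_u]=\E[f(r(t,Y_t))\mid Y_u]=\E[f(X_t)\mid Y_u]=\E[f(X_t)\mid X_u],
\]
where the last equality uses that $Y_u=R(u,X_u)$ is a bijective function of $X_u$. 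This is precisely the $(\cF_t)$-Markov property, and the transition function of $X$ is $P^X_{u,t}f(x)=P^Y_{u,t}(f\circ r(t,\cdot))\big(R(u,x)\big)$.

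It remains to check the Feller property of the evolution system $(P^X_{u,t})$, i.e. the four conditions preceding \eqref{eq:contevo}: for $f\in C_0(\R)$, $P^X_{u,t}f\in C_0(\R)$, $\|P^X_{u,t}f\|_\infty\le\|f\|_\infty$, positivity, and the joint continuity \eqref{eq:contevo}. Positivity and the contraction bound are immediate from the formula $P^X_{u,t}f=P^Y_{u,t}(f\circ r(t,\cdot))\circ R(u,\cdot)$ since $P^Y$ is sub-Markovian and $r,R$ are deterministic. For the $C_0$ and continuity properties, the natural route is to first establish that $(P^Y_{u,t})$ is itself a Feller evolution system; this should be obtained from the continuity in law of the diffusion $Y$ with respect to its starting data — a standard consequence of weak existence and uniqueness in law for bounded, uniformly elliptic coefficients (again via the martingale-problem machinery of \cite{stroockvar}, together with tightness/Skorokhod arguments) — exactly as one proves that a nice diffusion is Feller. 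I expect this verification that $Y$ is Feller to be the main technical obstacle: one must control the modulus of continuity of $(u,t,y)\mapsto\E^{u,y}[h(Y_t)]$ uniformly, which requires the heat-kernel-type bounds or the continuity of solutions to the martingale problem that are available in dimension one for merely bounded measurable elliptic coefficients. Granting that $(P^Y_{u,t})$ is Feller, one then transfers it to $X$: the maps $(t,y)\mapsto r(t,y)$ and $(t,x)\mapsto R(t,x)$ are jointly continuous (indeed $C^1$ in $t$ and strictly monotone in the space variable, with $r(t,\cdot)$ and $R(t,\cdot)$ mapping $\pm\infty$ to $\pm\infty$ uniformly in $t$ because the slopes $\alpha,\mu$ are bounded away from $0$ and $\infty$ by the $\mathbf H^{(x_i)}$-hypothesis and $\beta_i$ being valued in a compact subinterval of $(-1,1)$); hence $f\mapsto f\circ r(t,\cdot)$ maps $C_0(\R)$ to $C_0(\R)$ continuously in the sup norm, uniformly in $t$, and the composition $P^X_{u,t}f=P^Y_{u,t}(f\circ r(t,\cdot))\circ R(u,\cdot)$ inherits $P^X_{u,t}f\in C_0(\R)$ and the joint continuity \eqref{eq:contevo}. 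By Theorem \ref{thm:bottcher} this is equivalent to the semigroup $(P_t)$ of the space-time process $\tilde X$ being Feller, i.e. $X$ is a Feller time inhomogeneous Markov process, which is the claim. \qed
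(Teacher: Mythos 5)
Your overall architecture coincides with the paper's: both reduce to the local-time-free diffusion $Y$ via the homeomorphisms $R(t,\cdot)$ and $r(t,\cdot)$, obtain the $(\cF_t)$-Markov property of $X$ from that of $Y$ (Theorem \ref{thm:pbmart} and \cite{stroockvar}, Theorem 6.2.2), and then transfer the Feller property through $P^X_t\varphi(s,x)=P^Y_t(\varphi\circ\tilde r)(\tilde R(s,x))$. A side remark: the $\mathbf{AJ}^{(x_i)}$-hypothesis you hesitate over is not needed here; uniqueness in law for $Y$ (which is all the Markov property requires) already follows from well-posedness of the martingale problem for bounded measurable uniformly elliptic coefficients in dimension one, $\mathbf{AJ}^{(x_i)}$ being used only for pathwise uniqueness.

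The genuine gap is exactly the step you flag as ``the main technical obstacle'' and then do not carry out: proving that $(P^Y_{s,t})$ (equivalently, by Theorem \ref{thm:bottcher}, the space-time semigroup $(P^Y_t)$) is Feller. The paper explicitly notes that this cannot be obtained by citing \cite{stroockvar} directly (Corollary 3.1.2 there requires smooth coefficients), and its actual mechanism is the PDE theory of Sections \ref{sec:feynman}--\ref{sec:EDP}: for $\varphi\in C^{\infty,\infty}_c(E)$ one writes $\E^{r,z}[\varphi(t+s,Y_{t+s})]=u_{t+s}(r,z)$, where $u_{t+s}$ is the classical solution of the transmission problem $(\cP^0_{\Delta_{\bf y}}(\bar\sigma,\bar b,0))$ furnished by Theorem \ref{thm:sol-classique}; the continuity of $u_{t+s}$ on $E$, the estimate $\E^{r,z}|Y_{t+s}-Y_{t+r}|\le C|r-s|^{1/2}$, and the boundary condition $\lim_{|y|\to\infty}u_{t+s}(s,y)=0$ give $P^Y_t\varphi\in C_0(E)$; density of $C^{\infty,\infty}_c(E)$, the contraction property, and Proposition III.2.4 of \cite{RY} (which upgrades pointwise continuity at $t=0$ to strong continuity) finish the argument. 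This is precisely why the proposition assumes the $\mathbf H^{(t)}$-hypotheses on $\sigma$ and $b$ --- they are the hypotheses of Theorem \ref{thm:sol-classique} --- and your proposal never uses them, which is the symptom of the missing step. Your suggested alternative (continuity of $(s,y)\mapsto\P^{s,y}$ from tightness plus uniqueness of the martingale problem) is a plausible different route to the pointwise continuity of $P^Y_t\varphi$, but as written it is only an expectation of what ``should'' work; you would still have to supply the vanishing at infinity and the passage to strong continuity \eqref{eq:contevo}, none of which is done in the proposal.
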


\begin{proof}
Remember that for any $t\in[0,T]$, $X_t=r(t,Y_t)$ where $Y$ is the solution of \eqref{eq:Y} with  the coefficients defined by \eqref{eq:defi-sigma}. As these coefficients satisfy the hypotheses of Theorem \ref{thm:pbmart}
  we can see from Theorem 6.2.2 in \cite{stroockvar} that $Y$ is Markov, as already pointed in Subsection \ref{ss:pbmart}. 

Therefore we can easily see that $X$ is Markov and that the associated family $(P_{s,t}^X)$ satisfies \eqref{eq:evo}. Thus the family $(P^X_t)$ (associated to the space time process $\tilde{X}$) satisfies \eqref{eq:semigroup}. The only point that requires special attention is to show that $(P^X_t)$ is a Feller semigroup. Indeed, as the coefficients $\bar{\sigma},\bar{b}$ in \eqref{eq:Y}
are not smooth, we cannot apply directly Corollary 3.1.2 in \cite{stroockvar}, to get the Feller property for the family $(P^Y_{s,t})$ associated to~$Y$, and deduce the Feller property
for $(P_{s,t}^X)$. 

Thus we will focus on $(P^Y_t)$, and prove by our means that this is a Feller semigroup.
We recall that
\begin{equation}
\label{eq:defPYt}
\forall (s,y)\in E, \;\;\forall \varphi\in C_0(E), \;\; \forall\,0\leq t\leq T-s, \quad P^Y_t\varphi(s,y)=P^Y_{s,t+s}\varphi(t+s,y)=\E^{s,y}[\varphi(s+t,Y_{s+t})].
\end{equation}
Then, one may show that $(P_t^X)$
inherits the Feller property of $(P_t^Y)$. To that aim, one may denote now $\tilde{r}(t,y)=(t,r(t,y))$, $\tilde{R}(t,x)=(t,R(t,x))$, use the relationship
$$
\forall (s,x)\in E,\;\;\forall \varphi\in C_0(E),\;\forall t\in[0,T-s],\quad P^X_t\varphi(s,x)=P^Y_t(\varphi\circ\tilde{r})(\tilde{R}(s,x)),
$$
the continuity of $r(t,z)$, $R(t,x)$, and $\lim_{y\to\pm\infty}r(t,y)=\pm\infty$, $\lim_{x\to\pm\infty}R(t,x)=\pm\infty$,
for any~$t\in[0,T]$. 

\vspace{0.3cm}
That being said, we now prove that $(P^Y_t)$ is Feller. We denote $\Delta_{\bf y}=\tilde{R}(\Delta_{\bf x})$. Note that, thanks to the assumptions on the coefficients, and Proposition \ref{prop:sol-classique}, we have that $(\mathcal{P}^\lambda_{\Delta_{\bf y}}(\bar{\sigma},\bar{b},0))$ has a classical solution for any finite time horizon, terminal condition $f\in C_0(\R)\cap L^2(\R)$, and $g\equiv 0$. Note that $(\mathcal{P}^\lambda_{\Delta_{\bf y}}(\bar{\sigma},\bar{b},0))$
is a parabolic transmission problem with discontinuous coefficients, but with no transmission condition (more precisely the transmission condition is simply of type
$u'_y(t,y_i(t)+)=u'_y(t,y_i(t)-)$ for any $t\in[0,T)$).

\vspace{0.3cm}
{\bf STEP1.}  Pick $\varphi\in C^{\infty,\infty}_c(E)$. We will show that $P^Y_t\varphi$ is in $C_0=C_0(E)$.

\vspace{0.1cm}
{\bf a)} Let $(s,y)\in E$ be fixed. We first show that $P^Y_t\varphi$ is continuous at point $(s,y)$. Let $\delta >0$. For any 
$(r,z)\in E$ (we suppose that $t+s,t+r<T$) we have
\begin{equation}
\label{eq:PYtcont}
\begin{array}{lll}
|P^Y_t\varphi(s,y)-P^Y_t\varphi(r,z)|&\leq&\big|\E^{s,y}[\varphi(t+s,Y_{t+s})]-\E^{r,z}[\varphi(t+s,Y_{t+s})]\,\big|
\\
\\
&&\hspace{-2.5cm}+\big|\E^{r,z}[\varphi(t+s,Y_{t+s})]-\E^{r,z}[\varphi(t+s,Y_{t+r})]\,\big|
+|P^Y_{r,t+r}\varphi(t+s,z)-P^Y_{r,t+r}\varphi(t+r,z)|.\\
\end{array}
\end{equation}
Note that  by virtue of Theorem \ref{thm:feynman}, for any $(r,z)$ we may regard $\E^{r,z}[\varphi(t+s,Y_{t+s})]$ as $u_{t+s}(r,z)$, 
where $u_{t+s}$ is the classical solution of the parabolic problem $(\cP^0_{\Delta_{\bf y}}(\bar{\sigma},\bar{b},0))$ (with time horizon $t+s\leq T$), with terminal condition $\varphi(t+s,\cdot)\in C^\infty_c(\R)\subset C_0(\R)\cap L^2(\R)$ 
and source term $g\equiv 0$.

As the function $u_{t+s}$ is continuous on $E$ we may find $\eta_1$ such that for any
$(r,z)$ with $|(s,y)-(r,z)|<\eta_1$ we have
$$
\big|\E^{s,y}[\varphi(t+s,Y_{t+s})]-\E^{r,z}[\varphi(t+s,Y_{t+s})]\,\big|<\frac \delta 3.$$
We now turn to the second RHS term in \eqref{eq:PYtcont}. We have, 
$$
\big|\E^{r,z}[\varphi(t+s,Y_{t+s})]-\E^{r,z}[\varphi(t+s,Y_{t+r})]\,\big|\leq ||\varphi'_x||_\infty  \E^{r,z}|Y_{t+s}-Y_{t+r}|.$$
Further, we have
$$
\E^{r,z}|Y_{t+s}-Y_{t+r}|^2\leq 4\big(  \E^{r,z}\big| \int_{t+s}^{t+r}\bar{\sigma}(u,Y_u)dW_u \big|^2 
+  \E^{r,z}\big| \int_{t+s}^{t+r}\bar{b}(u,Y_u)du \big|^2   \big)
\leq 4\bar{M}^2(|r-s|+|r-s|^2),$$
where we have used $|a+b|^2\leq 4(|a|^2+|b|^2)$ and the fact that $\bar{\sigma},\bar{b}\in\theta(\bar{m},\bar{M})$. Thus by Jensen inequality we see that
$$
\E^{r,z}|Y_{t+s}-Y_{t+r}|\leq C(T)|r-s|^{1/2}.$$
 To sum up we may find $\eta_2>0$ such that for any $|(s,y)-(r,z)|<\eta_1\wedge\eta_2$ we have
$$
  \big|\E^{r,z}[\varphi(t+s,Y_{t+s})]-\E^{r,z}[\varphi(t+s,Y_{t+r})]\,\big|< \frac \delta 3.$$
  To finish with, we turn to the third RHS term in \eqref{eq:PYtcont}. It is clear that we have
  $$
  |P^Y_{r,t+r}\varphi(t+s,z)-P^Y_{r,t+r}\varphi(t+r,z)|\leq ||\varphi(t+s,\cdot)-\varphi(t+r,\cdot)||_\infty
  \leq ||\varphi'_t||_\infty \,|r-s|,$$
so that we may find $\eta_3>0$ such that for any $|(r,z)-(s,x)|<\eta_3$ we have
$$
|P^Y_{r,t+r}\varphi(t+s,z)-P^Y_{r,t+r}\varphi(t+r,z)|<\frac \delta 3.$$
Thus, setting $\eta=\eta_1\wedge\eta_2\wedge\eta_3$, we have
$$
|P^Y_t\varphi(s,x)-P^Y_t\varphi(r,z)|<\delta$$
for any $|(r,z)-(s,x)|<\eta$. Therefore the continuity of $P^Y_t\varphi$ is established.

\vspace{0.2cm}
{\bf b)}  We now show that $\lim_{|y|\to\infty}P^Y_t\varphi(s,y)\to 0$ (for any $s\in[0,T]$). Again we may see $P^Y_t\varphi(s,\cdot)$ as the solution $u_{t+s}(s,\cdot)$ (at time $s\in[0,t+s]$) of $(\cP^0_{\Delta_{\bf y}}(\bar{\sigma},\bar{b},0))$ with terminal condition $\varphi(t+s,\cdot)$ (again time horizon is $t+s$ and the source term is zero). The result then follows from the boundary condition in problem $(\cP^0_{\Delta_{\bf y}}(\bar{\sigma},\bar{b},0))$.

\vspace{0.3cm}
{\bf STEP2.} Pick $\varphi\in C_0$. We may construct a sequence $(\varphi_n)$ in $C^{\infty,\infty}_c(E)$ such that 
$||\varphi_n-\varphi||_\infty\to 0$ as $n\to\infty$. As $||P^Y_tf||_\infty\leq ||f||_\infty$ for any $f\in C_b(E)$, we get
$||P^Y_{t}\varphi-P^Y_{t}\varphi_n||_\infty\leq ||\varphi-\varphi_n||_\infty$, and we see that the sequence $(P^Y_{t}\varphi_n)$ in 
$C_b(E)$ converges uniformly to $P^Y_{t}\varphi$. Therefore $P^Y_{t}\varphi$ is in $C_0$, as each $P^Y_{t}\varphi_n$ is in $C_0$ by Step 1. This shows that for any $t\in[0,T]$, $P^Y_tC_0\subset C_0$.

\vspace{0.2cm}
{\bf STEP3.} Let $(s,y)\in E$ and $\varphi\in C_0$. From \eqref{eq:defPYt} and the continuity of $Y$, we easily see by dominated convergence that 
$P^Y_t\varphi(s,y)\to\varphi (s,y)$ as $t\downarrow 0$. Using this and the conclusion of Step 2, we deduce from 
Proposition III.2.4 in \cite{RY} that $(P^Y_t)$ is a Feller semigroup.
\end{proof}

\vspace{0.5cm}

Therefore the corresponding space-time process $\tilde{X}=((t,X_t))_{t\in [0,T]}$ is an $E$-valued Feller homogeneous $(\cF_t)$-Markov process (cf Subsection \ref{ss:markov-inho}). We wish to identify the infinitesimal generator of $\tilde{X}$. 
For technical reasons we only treat the case $\Delta_{\bf x}=\Delta$ (see Remark \ref{rem:gene-cyl}).
To that aim we have to introduce further notations.

\vspace{0.2cm}
With the same assumptions on the coefficients $\beta_i$ as in Theorem \ref{thm-EDSTL},  we define
$$
\begin{array}{l}
\mathcal{S}^X=\Big\{\,\varphi\in C(E)\cap C^{1,2}(E\setminus\Delta):\;\;  \text{with}\;\; \varphi(T,\cdot)=0,\;\;\;
t\mapsto \varphi'_t(t,i) \text{ is continuous on }[0,T),\\
\\
\hspace{1cm}\forall (t,i)\in\Delta,\;\;\varphi'_t(t,i\pm) = \varphi'_t(t,i)
\text{ and } \varphi'_x(t,i\pm)\text{ and }\varphi''_{xx}(t,i\pm)\text{ exist with } \\
\\
\hspace{0.2cm}\frac{\sigma^2}{2}(t,i+)\varphi''_{xx}(t,i+)+b(t,x+)\varphi'_x(t,i+)=\frac{\sigma^2}{2}(t,i-)\varphi''_{xx}(t,i-)+b(t,i-)\varphi'_x(t,i-).\\
\\
\hspace{2.5cm}\text{Besides}\;\;
(1+\beta_i(t))\varphi'_x(t,i+)=(1-\beta_i(t))\varphi'_x(t,i-)\;\;\forall 1\leq i\leq I,\,\forall t\in[0,T)\;\;(\star)
\\
\\
\hspace{1.5cm}\forall 1\leq i\leq I,\;\;\;\varphi'_x(t,i\pm) \text{ and }\varphi''_{xx}(t,i\pm)\text{ are continuous functions of } t\in[0,T)\\
\\
\hspace{1.5cm}\text{and }\lim_{|x|\to\infty}\big(\varphi'_t(t,x)+\frac 1 2\sigma^2(t,x)\varphi''_{xx}(t,x)+b(t,x)\varphi'_x(t,x)\big)=0
\quad\forall t\in[0,T]
  \Big\}.\\
  \end{array}
  $$
  For any $\varphi\in\cS^X$ we define $L^X\varphi$ by
  $$
\begin{array}{lll}
\forall (t,x)\in E\setminus\Delta,&
L^X\varphi(t,x)&=\varphi'_t(t,x)+\frac 1 2\sigma^2(t,x)\varphi''_{xx}(t,x)+b(t,x)\varphi'_x(t,x)\\
\\
\forall (t,i)\in\Delta,& L^X\varphi(t,i)&=\varphi'_t(t,i)+\frac{\sigma^2}{2}(t,i+)\varphi''_{xx}(t,i+)+b(t,i+)\varphi'_x(t,i+)\\
\\
&&=\varphi'_t(t,i)+\frac{\sigma^2}{2}(t,i-)\varphi''_{xx}(t,i-)+b(t,i-)\varphi'_x(t,i-).\\
\end{array}
$$

We will have the following result.

\begin{theorem}
\label{thm:gene}

Assume $\Delta_{\bf x}=\Delta$. In the context of Proposition \ref{prop:Xmarkov} let $X=(X_t)_{t\in [0,T]}$ be the solution of~\eqref{eq:X}.

 We then denote by $(\cL^X,D(\cL^X))$ the infinitesimal generator of the Feller space-time process $\tilde{X}$. 

Then the operator $(\cL^X,D(\cL^X))$ is the closure of $(L^X,\cS^{X})$.

\end{theorem}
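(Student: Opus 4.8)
The plan is to show that $\cS^X$ is a \emph{core} for $(\cL^X,D(\cL^X))$. Concretely, I will establish (i) that $\cS^X\subset D(\cL^X)$ with $\cL^X\varphi=L^X\varphi$ for every $\varphi\in\cS^X$, and (ii) that for some (hence any) $\lambda>0$ the range $(\lambda-L^X)(\cS^X)$ is dense in $C_0=C_0(E)$. Since $(\cL^X,D(\cL^X))$ generates a strongly continuous contraction (Feller) semigroup by Proposition \ref{prop:Xmarkov}, the classical core criterion --- a subspace $D\subset D(\cL^X)$ with $(\lambda-\cL^X)(D)$ dense in $C_0$ is a core, because $(\lambda-\cL^X)^{-1}$ is bounded --- then yields exactly that $(\cL^X,D(\cL^X))$ is the closure of $(L^X,\cS^X)$.

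\emph{Step 1: $\cS^X\subset D(\cL^X)$ and $\cL^X=L^X$ on $\cS^X$.} Let $\varphi\in\cS^X$; in particular $\varphi$ and $L^X\varphi$ lie in $C_0$. Since $\Delta_{\bf x}=\Delta$, i.e. $x_i\equiv i$, I apply the change of variable formula of Proposition \ref{prop:transfo} (its first part, equation \eqref{eq:Xprems}; equivalently Corollary \ref{cor-peskirmulti} applied to the semimartingale $X$ solving \eqref{eq:X}) to $\varphi(t,X_t)$, getting
\[
d\varphi(t,X_t)=(\sigma\varphi'_{x,\pm})(t,X_t)\,dW_t+\bigl[\varphi'_{t,\pm}+b\varphi'_{x,\pm}\bigr](t,X_t)\,dt+\frac12(\sigma^2\varphi''_{xx})(t,X_t)\1_{\{X_t\neq i,\,\forall 1\leq i\leq I\}}\,dt+\sum_{i=1}^I\bigl[\vartriangle\varphi'_x(t,i)+\beta_i(t)\varphi'_{x,\pm}(t,i)\bigr]\,dL^i_t(X).
\]
By the transmission condition $(\star)$ satisfied by $\varphi$ together with identity \eqref{eq:varbetai}, the coefficient of each $dL^i_t(X)$ vanishes. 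Using $\int_0^T\1_{\{X_t=i\}}\,dt=0$ $\P$-a.s. (as in \eqref{eq:int-temps-lebeg-zero}) and the interface matching conditions built into $\cS^X$ (which make $\varphi'_{t,\pm}=\varphi'_t$ at the interfaces and the drift continuous), the drift equals $L^X\varphi(t,X_t)\,dt$ up to a $dt$-null set. Hence $M^{\varphi,L^X\varphi}_t=\varphi(\tilde X_t)-\varphi(\tilde X_s)-\int_s^tL^X\varphi(\tilde X_u)\,du=\int_s^t(\sigma\varphi'_{x,\pm})(u,X_u)\,dW_u$, which is a local martingale (localise along $\tau_n=\inf\{u:|X_u|\geq n\}$, noting $X$ is continuous on $[0,T]$) and is bounded because $\varphi$ and $L^X\varphi$ are bounded; thus it is a genuine $(\cF_t)$-martingale under each $\P^{s,x}$. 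Proposition \ref{prop-genemart} then gives $\varphi\in D(\cL^X)$ and $\cL^X\varphi=L^X\varphi$. As $\cL^X$ is closed, this already yields $\overline{(L^X,\cS^X)}\subset(\cL^X,D(\cL^X))$.

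\emph{Step 2: density of $(\lambda-L^X)(\cS^X)$ --- the crux.} Fix $\lambda>0$ and let $\mathcal{D}_0=C_c(E)\cap C^{\infty,\infty}(E)$, which is dense in $C_0(E)$ and all of whose elements satisfy the $\mathbf{H}^{(t)}$-hypothesis. For $h\in\mathcal{D}_0$, Theorem \ref{thm:sol-classique} --- applied with source term $g=-h$ (in $C_c(E)$, satisfying $\mathbf{H}^{(t)}$) and terminal condition $f=0\in C_0(\R)\cap L^2(\R)$, the hypotheses on $\sigma,b$ being those in force in the context of Proposition \ref{prop:Xmarkov} --- produces a classical solution $u$ of $(\cP^\lambda_{\Delta}(\sigma,b,\beta))$, so that $L^Xu-\lambda u=-h$ on $E\setminus\Delta$. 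I then check $u\in\cS^X$: the regularity $u\in C(E)\cap C^{1,2}(E\setminus\Delta)$, the existence and continuity in $t$ of $u'_x(t,i\pm)$ and $u''_{xx}(t,i\pm)$, the transmission condition $(\star)$, the terminal condition $u(T,\cdot)=0$, and $u\in C_0$ are all part of the definition of a classical solution; since $\Delta_{\bf x}=\Delta$, Lemma \ref{lem:ut-cont} gives that $u'_t$ is continuous, whence $u'_t(t,i\pm)=u'_t(t,i)$ and $t\mapsto u'_t(t,i)$ is continuous; subtracting the two one-sided versions of the first line of $(\cP^\lambda_{\Delta})$ at a point $(t,i)$ and using continuity at $(t,i)$ of $u$, of $u'_t$ and of $g$ produces the second-order interface compatibility relation required in $\cS^X$; and on $E\setminus\Delta$ one has $u'_t+\frac12\sigma^2u''_{xx}+bu'_x=g+\lambda u\to0$ as $|x|\to\infty$ because $g$ has compact support in $x$ and $u\in C_0$. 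Thus $u\in\cS^X$ and $(\lambda-L^X)u=\lambda u-L^Xu=h$, so $(\lambda-L^X)(\cS^X)\supset\mathcal{D}_0$ is dense in $C_0$.

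\emph{Conclusion and main difficulty.} Steps 1 and 2 together with the core criterion give $(\cL^X,D(\cL^X))=\overline{(L^X,\cS^X)}$. The main obstacle is Step 2: one must verify that the classical solution furnished by Theorem \ref{thm:sol-classique} satisfies \emph{all} the defining conditions of $\cS^X$, and the genuinely delicate item is the second-order compatibility relation $\frac{\sigma^2}{2}(t,i+)u''_{xx}(t,i+)+b(t,i+)u'_x(t,i+)=\frac{\sigma^2}{2}(t,i-)u''_{xx}(t,i-)+b(t,i-)u'_x(t,i-)$ at the interfaces, which holds precisely because the interfaces do not move --- so that $u'_t$ is continuous across them by Lemma \ref{lem:ut-cont} (cf. Remark \ref{rem:ut-cont}) --- and because the source term is continuous; this is exactly why the statement is restricted to $\Delta_{\bf x}=\Delta$. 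A lesser technical point is the density in $C_0(E)$ of the admissible class $\mathcal{D}_0$ of source terms used to invert $\lambda-L^X$.
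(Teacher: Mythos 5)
Your proof is correct, and Step 1 (the martingale argument via Proposition \ref{prop:transfo}, identity \eqref{eq:varbetai}, the transmission condition $(\star)$ and Proposition \ref{prop-genemart}) coincides with the paper's. Where you genuinely diverge is in the second half. The paper does \emph{not} use the core criterion: it runs a Hille--Yosida argument (Theorem 1.2.12 of Ethier--Kurtz) on $(L^X,\cS^X)$ from scratch, which forces it to verify three things --- density of $\cS^X$, density of the range $\cR(\lambda I-L^X)$, and \emph{dissipativity} of $(L^X,\cS^X)$, the last via a maximum-principle analysis at interface points (showing $\varphi'_x(t_0,i\pm)=0$ at an interior positive maximum using the sign constraint from $(\star)$, and $\varphi'_t(t_0,x_0)\le 0$ via the mean value theorem) --- and then identifies the resulting Feller generator with $(\cL^X,D(\cL^X))$ through Exercise VII.1.18 of \cite{RY}. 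You instead exploit the fact, already secured by Proposition \ref{prop:Xmarkov}, that $(\cL^X,D(\cL^X))$ generates a strongly continuous contraction semigroup, so that $(\lambda-\cL^X)^{-1}$ is bounded and the range criterion alone makes $\cS^X$ a core; dissipativity of $L^X$ on $\cS^X$ is then inherited for free from $\cL^X$ rather than proved by hand. Both routes rest on the same hard analytic input --- solvability of the resolvent equation $\lambda u-L^Xu=-g$ within $\cS^X$ via Theorem \ref{thm:sol-classique} and Lemma \ref{lem:ut-cont}, including the second-order interface compatibility that you correctly extract by subtracting the two one-sided limits of the PDE (this is exactly where $\Delta_{\bf x}=\Delta$ and Remark \ref{rem:ut-cont} enter) --- but your argument is shorter and avoids the delicate interface maximum-principle step, at the cost of leaning more heavily on the prior Feller property of $X$; the paper's longer route has the side benefit of exhibiting $(L^X,\cS^X)$ as an independently verifiable pre-generator in the Hille--Yosida sense.
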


\begin{remark}
\label{rem:domL}
Note that the condition $\varphi(T,\cdot)=0$ in the definition of $\cS^X$ is here because we already know that the functions $\varphi$ in $D(\cL^X)$ have to satisfy
$\varphi(T,\cdot)=0$. Indeed, as we have set $P^X_t\varphi(s,x)=0$ for $t+s>T$, this is needed in order to have the existence of the limit in \eqref{eq:defi-gene} for $s=T$. This is somehow the same issue
as in the definition 
of the domain $D(\Lambda,\cV')$ in Lemma \ref{lem:geneU}.
\end{remark}

 \begin{proof}[Proof of Theorem \ref{thm:gene}] 
Take $\varphi\in \cS^X\subset C_0$ and notice that $L^X\varphi$ is in $C_0$. Then, using Proposition \ref{prop:transfo},
equation \eqref{eq:varbetai} and condition $(\star)$,
 we have
for any $0\leq s\leq t\leq T$,
$$
\varphi(\tilde{X}_t)-\varphi(\tilde{X}_s)-\int_s^tL^X\varphi(\tilde{X}_u)du=\int_s^t\varphi'_{x,\pm}(u,X_u)\sigma(u,X_u)dW_u.
$$

The above $t$-indexed process being a martingale we see by Proposition \ref{prop-genemart} that 
$\cS^X\subset D(\cL^X)$ and that $\cL^X$ coincides with $L^X$ on $\cS^X$.

We shall now prove that the closure of $(L^X,\cS^{X})$ is the generator of a Feller semigroup on $C_0$.
Indeed the result will then follow from Exercise VII.1.18 in \cite{RY} (note that in the language of \cite{ethier} we have $(L^X,\cS^{X})\subset (\cL^X,D(\cL^X))$, and
that $(\cL^X,D(\cL^X))$ is closed, see Proposition VII.1.3 in \cite{RY}).

The idea is to apply Theorem 1.2.12 in \cite{ethier}, which is an Hille-Yosida type theorem, in the Banach space $C_0$ (see also their Theorem 4.2.2).

\vspace{0.2cm}

{\bf STEP1.}
Let $g\in C^{1,0}_c(E)\subset C_0$, and $\lambda>0$. The equation
\begin{equation}
\label{eq:res-forte}
\lambda u-L^Xu=-g
\end{equation}
with terminal condition $u(T,\cdot)=0$ and with $\lim_{|x|\to\infty}u(t,x)=0$, has a classical solution $u(t,x)$ satisfying $(\star)$, 
 living in $C_0(E)\cap C^{1,2}(E\setminus\Delta)$, and satisfying all the other requirements for being in $\cS^X$, thanks to the results of Subsection
 \ref{ssec:solclassique-transfo} (see in particular Lemma \ref{lem:ut-cont} and Theorem \ref{thm:sol-classique}). Note in particular that as 
 $(L^Xu)(t,x)=(g+\lambda u)(t,x)$, and as $g\in C_c(E)$ and $u\in C_0(E)$, we clearly have
 that $(L^Xu)(t,x)\to 0$ as $x\to\infty$ (for any $t\in [0,T]$).

Remember that $C_c^{1,0}(E)$ is dense in $C_0$. Thus, denoting by $\cR(\lambda I-L^X)$ the image of $\cS^{X}$ by the operator 
$\lambda I-L^X$, we have
$$
C_c^{1,0}(E)\subset \cR(\lambda I-L^X)\subset C_0,$$
and taking closures we see that $\cR(\lambda I-L^X)$ is dense in $C_0$.

{\bf STEP2.} The domain $\cS^{X}$ is obviously dense in $C_0$.

{\bf STEP3.} We show now that $(L^X,\cS^{X})$ is dissipative. Let $\lambda>0$ and pick $\varphi\in\cS^{X}$. 

\vspace{0.1cm}

{\bf a)} Assume $\varphi$ reaches a positive maximum at a point 
$(t_0,x_0)\in [0,T)\times\R$.

\vspace{0.1cm}

 If $(t_0,x_0)\notin\Delta$ it is clear that $\varphi'_t(t_0,x_0)\leq 0$, $\varphi'_x(t_0,x_0)=0$ and $\varphi''_{xx}(t_0,x_0)\leq 0$, thus
$L^X\varphi(t_0,x_0)\leq 0$. 

\vspace{0.1cm}

If $(t_0,x_0)\in\Delta$ (i.e. $x_0=i$ for some $1\leq i\leq I$) things are not so clear because of the lack of smoothness of $\varphi$ on $\Delta$. But because 
$(1+\beta_{i_0}(t_0)),(1-\beta_{i_0}(t_0))>0$, $\varphi'_x(t_0,x_0+)$ and $\varphi'_x(t_0,x_0-)$ share the same sign and this implies 
$\varphi'_x(t_0,x_0\pm)=0$.

Let us now prove that $\varphi'_t(t_0,x_0)\leq 0$. Indeed, since $t\mapsto \varphi(t,x_0)$ is a $C^1$ function, we may apply the mean value theorem ensuring that for $h>0$ there exists $\theta\in (0,1)$ such that $\frac{1}{h}(\varphi(t_0 + h, x_0) - \varphi(t_0, x_0)) = \varphi'_t(t_0,x_0) + (\varphi'_t(t_0 + \theta h,x_0) - \varphi'_t(t_0,x_0))$. Now, since  $\varphi$ reaches a positive maximum at a point 
$(t_0,x_0)\in [0,T)\times\R$, the left hand side of the equality is negative. Then, letting $h$ tend to zero in the right hand side ensures that necessarily $\varphi'_t(t_0,x_0)\leq 0$.

 Again, since $\varphi$ reaches a positive maximum at $(t_0,x_0)$ we have
$\varphi''_{xx}(t_0,x_0\pm)\leq 0$, and consequently $L^X\varphi(t_0,x_0)\leq 0$.

Thus we have
$$
||\lambda\varphi -L^X\varphi||_\infty\geq \lambda\varphi(t_0,x_0)-L^X\varphi(t_0,x_0)\geq\lambda\varphi(t_0,x_0)=\lambda||\varphi||_\infty.$$

{\bf b)} Assume now  $\varphi$ reaches a positive maximum at a point 
$(T,x_0)$, $x_0\in\R$, therefore this positive maximum is in fact zero. Thus, either $\varphi$ is the null function and we have automatically 
$\lambda||\varphi||_\infty\leq ||\lambda\varphi -L^X\varphi||_\infty$. Either this is not the case and $\varphi$ reaches a strictly negative minimum
on $[0,T)\times\R$. Thus considering~$-\varphi$ and applying Subset a) we get the desired inequality.

\vspace{0.1cm}
{\bf c)} If it is $-\varphi$ that reaches a positive maximum, we may repeat Substeps a)-b) to get $\lambda||\varphi||_\infty\leq ||\lambda\varphi -L^X\varphi||_\infty$.

\vspace{0.1cm}

{\bf STEP4.} We apply Theorem 1.2.12 in \cite{ethier} to see that the closure of $(L^X,\cS^{X})$ generates a strongly continuous, contraction semigroup $(T_t)$ on $C_0$.

{\bf STEP5.} It remains to see that $(T_t)$ is positive, but this can be accomplished in the same manner as in the proof of Theorem 4.2.2 in \cite{ethier} (note that $(T_t)$ is conservative,
thanks to Proposition III.2.2 in \cite{RY}).
\end{proof}

\begin{remark}
\label{rem:gene-cyl}
In fact, if we do not have $\Delta_{\bf x}=\Delta$, to prove that $\varphi'_t(t_0,x_0)\leq 0$ in Step 3-b) (case $(t_0,x_0)\in\Delta$) seems more difficult. Besides, note that we would have to define the domain $\cS^X$ in a different manner, as we would no more have
the continuity of $u'_t$ for $u$ solving the resolvent equation~\eqref{eq:res-forte} (see Remark \ref{rem:ut-cont}).
\end{remark}

\appendix

\section{The Itô-Peskir formula}

The assumption of the It\^o-Peskir formula in \cite{peskir} is difficult to check in general and does not seem to be valid for the solution $u(t,x)$
 of a problem of type $(\mathcal{P}^\lambda_{\Delta_{\bf x}}(\sigma,b,\beta))$, which is our main purpose.

The first object of this section is to prove the slight modifications (stated in our Subsection \ref{ssec:peskir} in Theorem \ref{thm-peskir}) of the result stated in \cite{peskir}. We recall that we use a stronger assumption on the curve $\gamma$ but with a somewhat weakened assumption on the function $r$. The method of proof is similar to that of \cite{peskir} (see the second proof in \cite{peskir} p. 17) and uses the famous trick of T. Kurtz. Such a trick has already been used in other works in order to relax the assumptions of the It\^o-Peskir formula in the case where $\gamma(t)\equiv 0$ and applied for a particular semimartingale in \cite{talay-martinez}.

For notational convenience, a function $r$ satisfying the assumptions of Theorem \ref{thm-peskir}  will be denoted to belong to the class $C^{1,2}_{-}(C)\cup C^{1,2}_{+}(D)$.
 Note that although this set of assumptions is quite strong, it does in general not guarantee $r$ to be in $C^{1,2}(\overline{C})\cap C^{1,2}(\overline{D})$ in the sense of \cite{peskir}.

\begin{proof}[Proof of Theorem \ref{thm-peskir}]
We begin first to reduce the study to the case where the frontier is the straight line $x=0$. To this end, let us 
set for $(t,x)\in E$ 
$$
G(t,x) = r(t, x+\gamma(t))
$$
and 
$$
Y_t = X_t - \gamma(t).
$$
We have that 
$$
r(t,X_t) = r(t, X-\gamma(t) + \gamma(t)) = G(t, Y_t)
$$
Moreover, we see that 
\begin{equation}
\label{Eq:relations-r-G}
G'_x(t,x) = r'_x(t, x + \gamma(t))~;~G''_{xx}(t,x) = r''_{xx}(t, x + \gamma(t))~;~G'_{t}(t,x) = r'_{t}(t, x + \gamma(t)) + \gamma'(t)r'_x(t, x + \gamma(t))
\end{equation}
where we have used the crucial fact that $\gamma \in C^1$ for the partial derivative w.r.t the time variable. Note also that $Y$ is a semimartingale.

We see that $r\in C^{1,2}_{-}(C)\cup C^{1,2}_{+}(D)$ transfers to $G\in C^{1,2}_{-}(\R^\ast_{-})\cup C^{1,2}_{+}(\R_+^\ast)$.

We will now prove the It\^o-Peskir formula applied to $G$ and $Y$ with $t\mapsto \tilde{\gamma}(t)\equiv 0$ as the frontier.

Let us now introduce two functions $G_1$ and $G_2$ that will play a similar role as $r_1$ and $r_2$ in the original assumptions of \cite{peskir}. 
We define $G_1$ as the symmetrization of $G$ restricted to $\R_-$, namely
\begin{align}
G_1(t,x)=\left \{
\begin{array}{ll}
G(t,x) &\text{if }x<0\\
2G(t,0) - G(t,-x) &\text{if }x\geq 0
\end{array}
\right .
\end{align} 
and
$G_2$ as the symmetrization of $G$ restricted to $\R_+$, namely
\begin{align}
G_2(t,x)=\left \{
\begin{array}{ll}
G(t,x) &\text{if }x>0\\
2G(t,0)-G(t,-x) &\text{if }x\leq 0
\end{array}
\right .
\end{align} 
Note that these functions are continuous and that since $G\in C^{1,2}_{-}(\R^\ast_{-})\cup C^{1,2}_{+}(\R_+^\ast)$, 
and because of the symmetry in the definition, we see that $G_1$ and $G_2$ belong to $C^{1,1}(E)$. For the second space derivatives, the partial functions $x\mapsto G_1(t,x)$ and  $x\mapsto G_2(t,x)$ are shown to lay in $C^2(\R\setminus \{0\})$. In particular $G_1$ and $G_2$ belong to $C^{1,2}([0,T]\times \R\setminus \{0\})$ with the partial derivatives having limits as $x$ tends to $0$.

We now claim that it is possible to apply a (almost) classical It\^o formula to $G_1$ and $G_2$.
In order to prove this fact, one may use a regularization technique, the dominated convergence theorems for classical and stochastic integrals in order to handle the first order partial derivatives, and finally that
$$
\int_0^t G''_{{i}_{xx}}(s, Y_s){\bf 1}_{Y_s = 0}d\langle Y\rangle_s = 0,\quad i=1,2,
$$
as a consequence of the generalized occupation-time formula (see again Exercise VI.1.15 in \cite{RY}).
Since the proof would be long but without difficulties, we decide to omit it. 

We are now in position to follow the second proof in \cite{peskir}~-~Section 3. {\it Another proof and extensions}.

Set $Z_t^1 = Y_t \wedge 0 = \frac{1}{2}(Y_t - |Y_t|)$ and $Z_t^2 = Y_t \vee 0 = \frac{1}{2}(Y_t + |Y_t|)$. We use the trick due to T. Kurtz~:~
\begin{equation}
G(t,Y_t) = G_1\pare{t, Z_t^1} + G_2(t, Z_t^2) - G(t,0).
\end{equation}
The rest of the proof now may follow exactly the same lines as \cite{peskir}~-~Section 3. {\it Another proof and extensions}. Namely, we differentiate $Z^1$ and $Z^2$ with the use of the It\^o-Tanaka formula and apply the classical It\^o formula to $G_1$ and $G_2$ and semimartingales $Z^1$ and $Z^2$. The remaining difficulty in the proof is to identify the terms.

Hence, we prove that
\begin{align*}
G(t,Y_t) =&G(0,Y_0) + \int_0^t \frac{1}{2}\pare{G_t(s,Y_s+) + G_t(s, Y_s-)}ds\nonumber\\
&+ \int_0^t \frac{1}{2}\pare{G_x(s, Y_s+) + G_x(s, Y_s-)}dY_s + \frac{1}{2}\int_0^t G''_{xx}(s, Y_s){\bf 1}_{Y_s\neq 0}d\langle Y\rangle_s\nonumber\\
& + \frac{1}{2}\int_0^t \pare{G_x(s,Y_s+) - G_x(s, Y_s-)}{\bf 1}_{Y_s= 0}dL_s^{0}(Y).
\end{align*}
Now recalling that $G(t,Y_t) = r(t,X_t)$, $Y_t=X_t - \gamma(t)$ and the relations \eqref{Eq:relations-r-G}, we get
\begin{align*}
r(t,X_t) =&r(0,X_0) + \int_0^t \frac{1}{2}\pare{r_t(s,X_s+) + r_t(s, X_s-)}ds + \int_0^t \frac{1}{2}\pare{r_x(s,X_s+) + r_x(s, X_s-)}\gamma'(s)ds\nonumber\\
&+ \int_0^t \frac{1}{2}\pare{r_x(s, X_s+) + r_x(s, X_s-)}dX_s - \int_0^t \frac{1}{2}\pare{r_x(s, X_s+) + r_x(s, X_s-)}d\gamma(s)\nonumber\\
&+\frac{1}{2}\int_0^t r''_{xx}(s, X_s){\bf 1}_{X_s\neq \gamma(s)}d\langle Y\rangle_s + \frac{1}{2}\int_0^t \pare{r_x(s,X_s+) - r_x(s, X_s-)}{\bf 1}_{X_s= \gamma(s)}dL_s^{\gamma}(X),
\end{align*}
and since $d\gamma(s) = \gamma'(s)ds$, we get the formula.
\end{proof}

We end this section by proving Corollary \ref{cor-peskirmulti}.

\begin{proof}[Proof of Corollary \ref{cor-peskirmulti}]
We denote $\varepsilon_y=\inf_{1\leq i\leq I-1}\inf_{t\in[0,T]}(y_{i+1}(t)-y_i(t))$.
We can construct continuous functions $r_i:[0,T]\times \R \to \R$, $1\leq i\leq I$, in the following way:

For any $t\in [0,T]$, we require that $r_1(t,y)=r(t,y)$ for all $y<y_1(t)+\varepsilon_y/4$ and $r_1(t,y)=0$ for $y\geq y_2(t)-\varepsilon_y/4$
 and choose arbitrarily the restriction of $r_1$ on
$\{(t,z)\in [0,T]\times\R:y_1(t)+\varepsilon/4\leq z<y_2(t)-\varepsilon_y/4\}$
in order to have
$r_1\in C^{1,2}( \overline{D^y_{0}})\cap C^{1,2}( \overline{D^y_1})$. 

Then
for $1<i<I$, for any $t\in [0,T]$, we set $r_i(t,y)=0$ 
for $y< y_{i-1}(t) +\varepsilon_y/4$ and $y\geq y_{i+1}(t)-\varepsilon_y/4$, 
$r_i(t,y)=r(t,y)-r_{i-1}(t,y)$ for $y_{i-1}(t)+\varepsilon/4\leq z<y_i(t)-\varepsilon_y/4$,
$r_i(t,y)=r(t,y)$ for all $y_i(t)-\varepsilon_y/4\leq y<y_{i}(t)+\varepsilon_y/4$. We choose 
arbitrarily the restriction of $r_i$ on
$\{(t,z)\in [0,T]\times\R:y_i(t)+\varepsilon_y/4\leq z<y_{i+1}(t)-\varepsilon_y/4\}$ in order to have 
$r_i\in C^{1,2}( \overline{D^y_{i-1}})\cap C^{1,2}( \overline{D^y_i})$.

Finally, for any $t\in [0,T]$, 
$r_I(t,y)=0$ for all $y<y_{I-1}(t)+\varepsilon_y/4$, $r_I(t,y)=r(t,y)-r_{I-1}(t,y)$ for $y_{I-1}(t)+\varepsilon_y/4\leq y<y_I(t)-\varepsilon_y/4$ and $r_I(t,y)=r(t,y)$ for all $y\geq y_I(t)-\varepsilon_y/4$.

\vspace{0.2cm}
Notice that this construction ensures that 
 $r_i\in C^{1,2}( \overline{D^y_{i-1}})\cap C^{1,2}( \overline{D^y_i})$ for any $1\leq i\leq I$ and
$$
r=\sum_{i=1}^Ir_i.$$
  
 Therefore the result, by summation of formula \eqref{eq:peskir} in Theorem \ref{thm-peskir}, and linearity of the derivatives. The second part of the corollary is
 proved in a similar manner.
\end{proof}

\section{Partial Differential Equations aspects}

\begin{proof}[Proof of Theorem \ref{thm:gene-lions}]{\bf STEP1.} We first treat the case $f\equiv 0$, and deal for the moment with a source term $G_*\in\cV'$.

{\bf a)} With the constant $\lambda_0>0$ of Condition ii) we denote $\cA_{\lambda_0}(u,v)=\cA(u,v)+\lambda_0\langle u,v\rangle_\cH$, for any $u,v\in\cV$. Using the triangular inequality, the Cauchy-Schwarz inequality, and 
$||\cdot||_\cH\leq||\cdot||_\cV$ we get from i)
\begin{equation}
\label{eq:Acont}
\forall u,v\in\cV,\quad |\cA_{\lambda_0}(u,v)|\leq C'||u||_\cV||v||_\cV,
\end{equation}
with $C'=\max(C,\lambda_0)$.  For any $w\in\cV$ the map $v\mapsto\cA_{\lambda_0}(w,v)$, $v\in\cV$, is a continuous linear form (thanks to \eqref{eq:Acont}), which we denote by $-A_{\lambda_0} w$. In other words $-A_{\lambda_0}:\cV\to\cV'$ is defined by
$$
\big\langle -A_{\lambda_0} w,v\big\rangle_{\cV',\cV}=\cA_{\lambda_0}(w,v),\quad\forall w,v\in\cV.$$
Again thanks to \eqref{eq:Acont} it can be seen that the linear application $-A_{\lambda_0}$ is continuous. Further, thanks to~ii), it satisfies
$$
\langle-A_{\lambda_0} v,v\rangle_{\cV',\cV}\geq \alpha_0||v||_\cV^2,\quad\forall v\in\cV.$$

Theorem 3.1.1 in \cite{lions-magenes} asserts then that $-\Lambda-A_{\lambda_0}$ is an isomorphism from $\cV\cap D(\Lambda,\cV')$
to $\cV'$, so that  for any $G_*^{\lambda_0}\in\cV'$ there exists a unique $w\in\cV\cap  D(\Lambda,\cV')$ (in particular 
$w(T,\cdot)=0$) such that,
\begin{equation}
\label{eq:sol-faible3}
 \big\langle -\dfrac{\mathrm{d} w}{\mathrm{d t}},v\big\rangle_{\cV',\cV}+\cA_{\lambda_0}(w,v)=
\big\langle G_*^{\lambda_0},v\big\rangle_{\cV',\cV}\quad\forall v\in\cV.
\end{equation}
\noindent
{\bf b)} We denote now $e^{\lambda_0\cdot}$ the function $t\mapsto e^{\lambda_0 t}$, and define $G_*^{\lambda_0}\in\cV'$ by 
$$
\big\langle G_*^{\lambda_0},v\big\rangle_{\cV',\cV}=\big\langle G_*,e^{\lambda_0\cdot}v\big\rangle_{\cV',\cV},\quad \forall v\in\cV.$$

We set $u_*(t,x)=e^{-\lambda_0t}w(t,x)$ with the function $w\in\cV$ satisfying \eqref{eq:sol-faible3}. Note that 
$u_*\in\cV\cap D(\Lambda,\cV')$
(in particular $\dfrac{\mathrm{d} u_*}{\mathrm{d t}}\in\cV'$ and $u_*(T,\cdot)= 0$). We have
$$
-\frac{\mathrm{d}w}{\mathrm{d t}}=-\lambda_0 w-e^{\lambda_0\cdot}\frac{\mathrm{d}u_*}{\mathrm{d t}}.$$
So that we deduce from \eqref{eq:sol-faible3} that
\begin{equation*}
\big\langle -\dfrac{\mathrm{d} u_*}{\mathrm{d t}},e^{\lambda_0\cdot}v\big\rangle_{\cV',\cV}+\cA(u_*,e^{\lambda_0\cdot}v)=
\big\langle G_*,e^{\lambda_0\cdot}v\big\rangle_{\cV',\cV},\quad \forall v\in\cV,
\end{equation*}
and thus
\begin{equation}
\label{eq:sol-faible4}
\big\langle -\dfrac{\mathrm{d} u_*}{\mathrm{d t}},v\big\rangle_{\cV',\cV}+\cA(u_*,v)=
\big\langle G_*,v\big\rangle_{\cV',\cV},\quad \forall v\in\cV.
\end{equation}
 
\vspace{0.1cm}

{\bf STEP2.}   We go back to the general case $f\in L^2(\R)$. Applying a trace theorem, we get the existence of $u_r\in L^2(0,T;H^1(\R))$,  with 
$\dfrac{\mathrm{d} u_r}{\mathrm{d t}}\in L^2(0,T;H^{-1}(\R))$, s.t. $u_r(T,\cdot)=f$ (cf \cite{lions-magenes} Chap. 1, N° 3). 
We define $G_*\in\cV'$ by
$$
\langle G_*,v\rangle_{\cV',\cV}=\langle G,v\rangle_{\cV',\cV}-\cA(u_r,v)
+\langle \dfrac{\mathrm{d} u_r}{\mathrm{d t}},v\rangle_{\cV',\cV},\quad\forall v\in\cV.$$
Considering the function $u_*$ of Step 1 we set $u=u_*+u_r$. Note that $u\in L^2(0,T;H^1(\R))$ (because $u\in\cV$), that 
$\dfrac{\mathrm{d} u}{\mathrm{d t}}\in L^2(0,T;H^{-1}(\R))$ (because $\frac{\mathrm{d}u_*}{\mathrm{d t}}\in\cV'$)
 and that $u(T,\cdot)=f$.

From \eqref{eq:sol-faible4} we get for any $v\in\cV$,
$$
\big\langle -\dfrac{\mathrm{d} u}{\mathrm{d t}},v\big\rangle_{\cV',\cV}=\langle G_*,v\rangle_{\cV',\cV}-\cA(u_*,v)-\big\langle \dfrac{\mathrm{d} u_r}{\mathrm{d t}},v\big\rangle_{\cV',\cV}=\langle G,v\rangle_{\cV',\cV}-\cA(u_*+u_r,v),
$$
and therefore \eqref{eq:sol-faible2}.  Besides, as $\dfrac{\mathrm{d} u}{\mathrm{d t}}\in L^2(0,T;H^{-1}(\R))$,
we can see from Theorem 1.3.1 and Proposition 1.2.1 in \cite{lions-magenes} that $u\in C([0,T];L^2(\R))$.

\vspace{0.2cm}
{\bf STEP3: uniqueness.} Suppose that $\bar{u}$ is another element of $L^2(0,T;H^1(\R))\cap C([0;T];L^2(\R))$ with $\bar{u}(T,\cdot)=f$ and 
$\dfrac{\mathrm{d} \bar{u}}{\mathrm{d t}}\in L^2(0,T;H^{-1}(\R))$
that satisfies \eqref{eq:sol-faible2}. Then $u-\bar{u}$ is an element
of $\cV\cap  D(\Lambda,\cV')$ (note that in particular $(u-\bar{u})(T,\cdot)=0$), that satisfies \eqref{eq:sol-faible4} with $G_*=0$. If we set
$w=e^{\lambda_0\cdot}(u-\bar{u})$ we will see that $w\in\cV\cap D(\Lambda,\cV')$ solves \eqref{eq:sol-faible3} with $G_*^{\lambda_0}=0$. But $-\Lambda-A_{\lambda_0}$ is an isomorphism, as stated in Step 1-a). Thus $w=0$ and therefore $u-\bar{u}=0$.
\end{proof}

\begin{proof}[Proof of Lemma \ref{lem:Acontcoer}]

First, recall that $\rho,a\in\Theta(m',M')$, $m'<M'$, and note that we have for any $v\in\cH$,
\begin{equation}
\label{eq:normeV}
\frac{1}{M'} ||v|| \leq ||v||_\cH\leq 
\frac{1}{m'} ||v||.
\end{equation}

Taking $u,v\in\cV$, we have, using Schwarz's inequality and \eqref{eq:normeV}
$$
\begin{array}{lll}
|\cA(u,v)|&\leq& M'\big|\big|\frac{\mathrm{d}u}{\mathrm{dx}}\big|\big|\times\big|\big|\frac{\mathrm{d}v}{\mathrm{dx}}\big|\big|
+\frac{M'}{m'}\big|\big|\frac{\mathrm{d}u}{\mathrm{dx}}\big|\big|\times||v||+\frac{\lambda}{m'}||u||\times||v||\\
\\
&\leq&C_3||u||_\cV||v||_\cV,\\
\end{array}
$$
where $C_3$ depends an $m',M',\lambda$. Therefore \eqref{eq:Adefcont} is proven.

Taking now $v\in\cV$ we have
$$
\cA_{\lambda_0}(v,v)\geq m'\big|\big|\frac{\mathrm{d}v}{\mathrm{dx}}\big|\big|^2+(\lambda+\lambda_0)||v||^2_\cH
-\int_0^T\int_\R\big( B \frac{\mathrm{d}v}{\mathrm{dx}} v \rho^{-1} \big)(t,x)dxdt.$$
But, using Young's inequality we get for $\delta>0$,
$$
\Big| \int_0^T\int_\R\big( B \frac{\mathrm{d}v}{\mathrm{dx}} v \rho^{-1} \big)(t,x)dxdt \Big|
\leq \frac{M'}{m'}\int_0^T\int_\R\big( \frac{1}{2\delta}\big|\frac{\mathrm{d}v}{\mathrm{dx}}\big|^2+\frac{\delta}{2}|v|^2  \big)(t,x)dxdt.$$
Choosing $\delta=M'/m'^2$ we get
$$
\cA_{\lambda_0}(v,v)\geq \frac{m'}{2}\big|\big|\frac{\mathrm{d}v}{\mathrm{dx}}\big|\big|^2
+(\lambda+\lambda_0)||v||^2_\cH-\frac{M'^2}{2m'^3}||v||^2,$$
and then, again by \eqref{eq:normeV},
$$
\cA_{\lambda_0}(v,v)\geq \frac{m'^3}{2}\big|\big|\frac{\mathrm{d}v}{\mathrm{dx}}\big|\big|_\cH^2+
\big( \lambda+\lambda_0-\frac{M'^4}{2m'^3} \big)||v||_\cH^2.
$$
Therefore it suffices to choose $\lambda_0>0$ s.t. $\lambda+\lambda_0>M'^4/2m'^3$ in order to define 
$\alpha_0=\min\{\frac{m'^3}{2},\lambda+\lambda_0-\frac{M'^4}{2m'^3}\}$ that satisfies \eqref{eq:Acoer}.
\end{proof}

\begin{proof}[Proof of Lemma \ref{lem:ut-L2}]
We consider a mollification $u_\tau(t,x)=u(t,x,\tau)$ of $u(t,x)$ (see for instance p22 in \cite{lieberman}).
We will show that 
\begin{equation}
\label{eq:ut-maj}
\int_0^T\int_\R|(u_\tau)'_t|^2\,dxdt\leq C_5
\end{equation}
with a constant $C_5$ not depending on $\tau$. Using a compactness argument this implies that there is an element 
$w\in L^2(0,T;L^2(\R))$ such that
 for any 
$\varphi\in C^{\infty,\infty}_{c,c}(E)$ the quantity $\int_0^T\int_\R(u_\tau)'_t\varphi\rho^{-1}\,dxdt$ converges to
$\int_0^T\int_\R w\varphi\rho^{-1}\,dxdt$, as $\tau\downarrow 0$. But using integration by parts with respect to the time variable we have
$$
\int_0^T\int_\R(u_\tau)'_t\varphi\rho^{-1}\,dxdt
=-\int_0^T\int_\R u_\tau\varphi'_t\rho^{-1}\,dxdt+\int_0^T\int_\R u_\tau\varphi\frac{\rho'_t}{\rho}\rho^{-1}\,dxdt.$$
But the right hand side term in the above expression converges (as $\tau\downarrow 0$) to
$$
-\int_0^T\int_\R u\varphi'_t\rho^{-1}\,dxdt+\int_0^T\int_\R u\varphi\frac{\rho'_t}{\rho}\rho^{-1}\,dxdt$$
(see Lemma 3.2 in \cite{lieberman}), which is nothing else than $\langle \dfrac{\mathrm{d} u}{\mathrm{d t}},\varphi\rangle_{\cV',\cV}$ (using the notations of Subsection \ref{ss:sol-faible}).
Therefore we will get the desired result.

Using Fubini type arguments and Lemma 3.3 in \cite{lieberman} we can see that $u_\tau$ is a weak solution of
$(\cP^\lambda_{\mathrm{div},T}(\rho_\tau,a_\tau,B_\tau))$, where $\rho_\tau=\rho(\cdot,\tau)$ (resp. $a_\tau$, $B_\tau$) is a mollified
version of $\rho$ (resp. $a$, $B$). Following \cite{lady1} the idea is to use $\varphi=(u_\tau)'_t\zeta_n^2\in H^{1,1}(E)$ as a test function
in \eqref{eq:sol-faible}, where $\zeta_n$ is some element of a sequence of cut-off functions $(\zeta_n)$ (this sequence can be defined for example in the same spirit as in the proof of Theorem VIII.6 in \cite{brezis}). In the following computations we drop any reference to the 
subscripts $\tau$ and $n$. But the function denoted by $u$ is smooth so that $u'_t$ and $u'_x$ exist in the classical sense. So that using integration
by parts w.r.t. the time variable in \eqref{eq:sol-faible} we first get
$$
\begin{array}{c}
-\int_0^T\int_\R|u'_t|^2\zeta^2\rho^{-1}+\frac 1 2 \int_0^T\int_\R au'_x(u''_{tx}\zeta^2+2u'_t\zeta\zeta'_x)
-\int_0^T\int_\R Bu'_xu'_t\zeta^2\rho^{-1}+\lambda\int_0^T\int_\R uu'_t\zeta^2\rho^{-1}\\
\\
=-\int_0^T\int_\R gu'_t\zeta^2\rho^{-1}.\\
\end{array}
$$
Using the relation 
$au'_xu''_{xt}\zeta^2=\frac 1 2\big( a|u'_x|^2\zeta^2 \big)'_t-\frac 1 2 a'_t|u'_x|^2\zeta^2-a|u'_x|^2\zeta\zeta'_t$ and 
$\zeta(0,\cdot)=\zeta(T,\cdot)=0$
we get
$$
\begin{array}{lll}
\int_0^T\int_\R|u'_t|^2\zeta^2\rho^{-1}&=&
-\frac 1 4 \int_0^T\int_\R a'_t|u'_x|^2\zeta^2-\frac 1 2 \int_0^T\int_\R a|u'_x|^2\zeta\zeta'_t+\int_0^T\int_\R au'_xu'_t\zeta\zeta'_x
-\int_0^T\int_\R Bu'_xu'_t\zeta^2\rho^{-1}\\
\\
&&+\lambda\int_0^T\int_\R uu'_t\zeta^2\rho^{-1}+\int_0^T\int_\R gu'_t\zeta^2\rho^{-1}.
\end{array}
$$
Remembering now that $\rho,a\in\Theta(m',M')$, $B\in\Xi(M')$ and $|a'_t|\leq C_3(k,\kappa,M)$, we get, using triangular
and Young inequalities, that
$$
\begin{array}{lll}
\frac{1}{M'}\int_0^T\int_\R|u'_t|^2\zeta^2&\leq&\frac 1 4C_3\int_0^T\int_\R|u'_x|^2\zeta^2+\frac{M'}{2}\int_0^T\int_\R|u'_x|^2\zeta\zeta'_t
+M'\delta\int_0^T\int_\R|u'_t|^2\zeta^2+\frac{M'}{\delta}\int_0^T\int_\R|u'_x|^2|\zeta'_x|^2
\\
\\
&&+\frac{M'}{m'}\delta\int_0^T\int_\R|u'_t|^2\zeta^2+\frac{M'}{m'}\frac 1 \delta\int_0^T\int_\R|u'_x|^2\zeta^2
+\frac{\lambda}{m'}\delta\int_0^T\int_\R|u'_t|^2\zeta^2+\frac{\lambda}{m'}\frac 1 \delta \int_0^T\int_\R u^2\zeta^2\\
\\
&&+\frac{\delta}{m'}\int_0^T\int_\R|u'_t|^2\zeta^2+\frac{1}{m'\delta}\int_0^T\int_\R g^2\zeta^2
\end{array}
$$
for any $\delta>0$. Adjusting now $\delta$ we get
$$
\int_0^T\int_\R|u_t|^2\zeta^2\leq C_4  \int_0^T\int_\R\big(|u'_x|^2+u^2+g^2 \big)\zeta^2
+C_4\int_0^T\int_\R|u'_x|^2(\zeta\zeta'_t+|\zeta'_x|^2)$$
with a constant $C_4$ depending on $m',M',M,k,\kappa,\delta,\lambda$. Using now $u,u'_x,g\in L^2(0,T;L^2(\R))$, Lemmas 3.2 and 3.3 in \cite{lieberman}, and the fact that
$|\zeta'_t|\leq c_4\frac 1 n$ and $|\zeta'_x|\leq c_4\frac 1 n$ ($c_4>0$ is some constant), we get \eqref{eq:ut-maj} with a constant
$C_5$ not depending on $\tau>0$, by letting $n$ tend to infinity.
\end{proof}

\section*{Acknowledgements}

 Both authors wish to acknowledge Faouzi Triki for fruitful discussions on the PDE aspects, and the anonymous referee for his very valuable comments.

\bibliography{../AAABIBLIOGRAPHIE/BIB.bib}

\end{document}